\newcommand\hV{{\accentset{\LARGEhat}{V}}}
\newcommand\hW{{\accentset{\LARGEhat}{W}}}
\newcommand\hZ{{\accentset{\LARGEhat}{Z}}}
\newcommand\cV{{\accentset{\LARGEcheck}{V}}}
\newcommand\cW{{\accentset{\LARGEcheck}{W}}}
\newcommand\cZ{{\accentset{\LARGEcheck}{Z}}}
\newcommand\hsigma{\accentset{\Largehat}{\sigma}}
\newcommand\csigma{\accentset{\Largecheck}{\sigma}}
\newcommand\homega{\accentset{\Largehat}{\omega}}
\newcommand\comega{\accentset{\Largecheck}{\omega}}
\numberwithin{equation}{section}
\newtheorem{Theorem}{Theorem}[section]
\newtheorem*{ThmA}{Theorem A}
\newtheorem{Lemma}[Theorem]{Lemma}
\newtheorem{Corollary}[Theorem]{Corollary}
\newtheorem{Definition}[Theorem]{Definition}
\theoremstyle{definition}
\newtheorem{Example}[Theorem]{Example}
\newtheorem{Remark}[Theorem]{Remark}
\newfont{\deffont}{cmbxti10}
\newfont{\german}{eufm10}
\newfont{\mymath}{cmr12}
\newcommand{\CalG}{\mathcal{G}}
\newcommand\rme{{\rm e}}
\newcommand\hook{\mathbin{\raise2.5pt\hbox{\hbox{{\vbox{\hrule height.4pt width6pt depth0pt}}}\vrule height3pt width.4pt depth0pt}\,}}
\newcommand\cTM{T^*\kern-2ptM}
\newcommand\semibasic{\text{\bf  sb}}
\renewcommand\sl{\mathfrak{s}\mathfrak{l} }
\newcommand\vess{\text{\german v\german e\german s\german s}}
\newcommand\thetaX{\theta_{\kern -1 pt X}}
\newcommand\ann{\text{\rm ann}}
\newcommand\CalPf{\mathcal{P}\text{\it \kern -.3pt f}}
\newcommand\Real{\text{\bf  R}}
\newcommand\diag{\text{\rm diag}}
\renewcommand\:{\colon}
\newcommand\TM{T\kern -2pt M}
\newcommand\real{\text{\bf R}}
\newcommand\mycap{\hbox{\ $\rlap{\kern -.3pt $\cap$}\raise.8pt\hbox{$\scriptstyle+$}$\ } }
\newcommand\hxi{\hat \xi}
\newcommand\cxi{\check \xi}
\newcommand\Ao{{\kern-2.3pt}\stackrel{\scriptscriptstyle o}{A}{}{\kern-2.3pt}}
\newcommand\Bo{{\kern-2.3pt}\stackrel{\scriptscriptstyle o}{B}{}{\kern-2.3pt}}
\newcommand\Ko{{\kern-2.3pt}\stackrel{\scriptscriptstyle o}{K}{}{\kern-2.3pt}}
\newcommand\Co{{\kern-2.3pt}\stackrel{\scriptscriptstyle o}{C}{}{\kern-2.3pt}}
\newcommand\Qo{{\kern-2.3pt}\stackrel{\scriptscriptstyle o}{Q}{}{\kern-2.3pt}}
\newcommand\Mo{{\kern-2.3pt}\stackrel{\scriptscriptstyle o}{M}{}{\kern-2.3pt}}
\newcommand\Xo{{\stackrel{\scriptscriptstyle o}{X}}{\kern-1.3pt}}
\newcommand\Yo{{\stackrel{\scriptscriptstyle o}{Y}}{\kern-1.3pt}}
\newcommand\barW{
	\hbox{\kern 2.5 true pt
	\vbox{\hrule width 6  true pt height .3 true pt \kern 1.4 true pt
	\hbox{\kern -2 true pt $W$}}}}
\DeclareMathOperator{\rank}{rank}
\DeclareMathOperator{\spn}{span}
\newcommand{\StTag}[1]{ \label{st:#1}
\ifthenelse{\boolean{proofmode}}{\ \marginpar{\quad\scriptsize st:#1} }{}      }
\newcommand{\EqTag}[1]{
\ifthenelse{\boolean{proofmode}}
{ {\label{eq:#1}}
  \stepcounter{equation}
  \tag{\theequation \rlap{\kern 23 pt{\scriptsize eq:#1}}}
}
{\label{eq:#1}}
 }
\newcommand{\EqRef}[1]{\eqref{eq:#1}}
\newcommand{\StRef}[1]{\ref{st:#1}}
\newcolumntype{C}{>\scriptstyle>{$}c <{$} }
\newcolumntype{L}{>\scriptstyle >{$} l <{$} }
\newcommand\CalA{\mathcal{A}}
\newcommand\CalB{\mathcal{B}}
\newcommand\CalE{\mathcal{E}}
\newcommand\CalI{\mathcal{I}}
\newcommand\CalL{\mathcal{L}}
\newcommand\CalK{\mathcal{K}}
\newcommand\CalH{\mathcal{H}}
\newcommand\CalS{\mathcal{S}}
\newcommand\sd{\mathbin{ \raise0.0pt\hbox{ \vrule height5pt width.4pt depth0pt}\!\times}}
\newcommand\barC{
	\hbox{\kern 2.3 true pt
	\vbox{\hrule width 6.5  true pt height .3 true pt \kern .9 true pt
	\hbox{\kern -0.8 true pt $C$}}}}
\newcommand\barM{
	\hbox{\kern 2.3 true pt
	\vbox{\hrule width 8.5  true pt height .3 true pt \kern .7 true pt
	\hbox{\kern -2.3 true pt $M$}}}}
\newcommand\barK{
	\hbox{\kern 2.5 true pt
	\vbox{\hrule width 6  true pt height .3 true pt \kern 1.4 true pt
	\hbox{\kern -2 true pt $K$}}}}
\newcommand\barU{
	\hbox{\kern .8 true pt
	\vbox{\hrule width 6.5  true pt height .3 true pt \kern .9 true pt
	\hbox{\kern -.8 true pt $U$}}}}
\newcommand\barCalI{
	\hbox{\kern 4.3 true pt
	\vbox{\hrule width 6.5  true pt height .3 true pt \kern .9 true pt
	\hbox{ \kern -4.3 true pt  $\CalI$}}}}
\newcommand\barXi{
	\hbox{\kern 1 true pt
	\vbox{\hrule width 6.5  true pt height .3 true pt \kern .9 true pt
	\hbox{\kern 1 true pt $\Xi$}}}}
\newcommand\Largehat{\smash{\raise -7.5 pt \hbox{\rm\Large\^{}}}}
\newcommand\LARGEhat{\smash{\raise -9.5 pt \hbox{\rm\LARGE\^{}}}}
\newcommand\hugehat{\smash{\raise -7.5 pt \hbox{\rm\huge\^{}}}}
\newcommand\Hugehat{\smash{\raise -7.5 pt \hbox{\rm\Huge\^{}}}}
\newcommand\Largecheck{\smash{\raise -7.5 pt \hbox{\rm\Large\v{}}}}
\newcommand\LARGEcheck{\smash{\raise -9 pt \hbox{\rm\LARGE\v{}}}}
\newcommand\hugecheck{\smash{\raise -7.5 pt \hbox{\rm\huge\v{}}}}
\newcommand\Hugecheck{\smash{\raise -7.5 pt \hbox{\rm\Huge\v{}}}}
\newcommand\hF{{\accentset{\LARGEhat}{F}}}
\newcommand\hI{{\accentset{\LARGEhat}{I}}}
\newcommand\hJ{{\accentset{\LARGEhat}{J}}}
\newcommand{\hpi}{\accentset{\Largehat}{\pi}}
\newcommand\cF{{\accentset{\LARGEcheck}{F}}}
\newcommand\cI{{\accentset{\LARGEcheck}{I}}}
\newcommand\cJ{{\accentset{\LARGEcheck}{J}}}
\newcommand\cpi{\accentset{\Largecheck}{\pi}}
\newcommand\bfq{\mathbf{q}}
\newcommand\bfp{\mathbf{p}}
\newcommand\tbfp{\mathbf{p}}
\newcommand\tbfpi{{{\pi}}}
\newcommand\bfGamma{\boldsymbol{\Gamma}}
\newcommand\bfthetaX{{\boldsymbol{\theta_{\kern -1 pt X}}}}
\newcommand\hf{{{\hat f}}}
\newcommand\cf{{{\check f}}}
\newcommand\Rtheta{{ \raise 1pt \hbox{$\scriptstyle {\boldsymbol{\theta}}$}}}
\newcommand\Ltheta{{ \lower 1pt \hbox{$\scriptstyle {\boldsymbol{\theta}}$}}}
\newcommand\Rsigma{{ \raise 1pt \hbox{$\scriptstyle \sigma$}}}
\newcommand\Reta{{ \raise 1pt \hbox{$\scriptstyle \eta$}}}
\newcommand\vecthsigma{\partial_{{\displaystyle \hat {\raise 1.3pt \hbox{$\scriptstyle \sigma$}}}^a}}
\newcommand\vectcsigma{\partial_{{\displaystyle \check {\raise 1.3pt \hbox{$\scriptstyle \sigma$}}}^\alpha}}
\def\Obj(#1, #2)[#3]#4{\obj(#1, #2)[#3]{#4}}
\def\beginDC#1[#2]{\begindc{#1}[#2]}
\begin{document}

\title{  B\"acklund Transformations for  
\\
	Darboux Integrable Differential Systems:  
\\	
	Examples and Applications
} 
\author{ Ian M. Anderson \\ Dept of Math. and Stat. \\ Utah State University  \and Mark E. Fels  \\Dept of Math. and Stat. \\ Utah State University }
\maketitle
\newpage
\setcounter{page}{1}
\section{Introduction}

	This article is a continuation of our paper {\it  B\"acklund Transformations for  
	Darboux Integrable Differential Systems} \cite{anderson-fels:2014a}. There 
	we established a general group-theoretical approach to the construction of B\"acklund transformations and  we also showed how this construction can be applied to construct B\"acklund transformation between equations which are Darboux integrable. In this paper we demonstrate this theory with a number of detailed examples and new applications.


We adopt the coordinate-free formulation of B\"acklund transformations provided by the differential-geometric setting of exterior differential systems (EDS)  \cite{Clelland-Ivey:2009a}.
	From this viewpoint,  two  differential systems $\CalI_1 \subset \Omega^*(N_1) $ and $\CalI_2  \subset \Omega^*(N_2) $, 
	defined on manifolds $N_1$ and $N_2$, are said to be related by a B\"acklund transformation
	if there exists a  differential system $\CalB  \subset \Omega^*(N) $ on a manifold $N$  and maps

\begin{equation}
\begin{gathered}
\begindc{\commdiag}[3]
\obj(0, 14)[B]{$(\CalB, N)$}
\obj(-12, 0)[I1]{$(\CalI_1, N_1)$}
\obj(12, 0)[I2]{$(\CalI_2, N_2)$\,}
\mor{B}{I1}{$\bfp_1$}[\atright, \solidarrow]
\mor{B}{I2}{$\bfp_2$}[\atleft, \solidarrow]
\enddc
\end{gathered}
\EqTag{BackDef}
\end{equation}
which  define  $\CalB$  as  integrable extensions for both  $\CalI_1$ and $\CalI_2$.   The main result from \cite{anderson-fels:2014a} on the  construction of B\"acklund transformations by  symmetry group reduction is the following theorem.

\begin{ThmA}
\StTag{ThIntro1}
	Let $\CalI$ be a differential system on a manifold $M$ with Lie symmetry groups $G_1$ and $G_2$. 
	Let $H$ be a  common subgroup of  $G_1$ and $G_2$ and assume that the actions of $G_1$, $G_2$ and $H$ are all
	regular on $M$.  Then the orbit  projection maps $\bfp_1: M/H \to M/G_1$ and $\bfp_2: M/H \to M/G_2$ are smooth
	surjective submersions  and
\begin{equation}
\begin{gathered}
\begindc{\commdiag}[3]
\obj(0, 32)[I]{$(\CalI,M)$}
\obj(0, 12)[H]{$(\CalI /H,M/H)$}
\obj(-30, 0)[I1]{$(\CalI /G_1,M/G_1)$}
\obj(30,  0)[I2]{$(\CalI /G_2,M/G_2)$}
\mor{I}{H}{$\bfq_H$}[\atleft, \solidarrow]
\mor{H}{I1}{$\bfp_1$}[\atleft, \solidarrow]
\mor{H}{I2}{$\bfp_2$}[\atright, \solidarrow]
\mor{I}{I1}{$\bfq_{G_1}$}[\atright, \solidarrow]
\mor{I}{I2}{$\bfq_{G_2}$}[\atleft, \solidarrow]
\enddc
\end{gathered}
\EqTag{Intro21}
\end{equation}
is a commutative diagram of EDS. Furthermore, if  the actions of $G_1 $ and $G_2$ are transverse to $\CalI$, then the  maps  in \EqRef{Intro21} are all  integrable extensions and  the diagram
\begin{equation}
\begin{gathered}
\begindc{\commdiag}[3]
\obj(0, 20)[H]{$(\CalB=\CalI /H,\, N=M/H)$}
\obj(-35, 0)[I1]{$(\CalI_1=\CalI /G_1,\, N_1=M/G_1)$}
\obj(35,  0)[I2]{$(\CalI_2=\CalI /G_2,\, N_2 = M/G_2)$}
\mor(-2,17)(-25,3){$\bfp_1$}[\atleft, \solidarrow]
\mor(2,17)(25,3){$\bfp_2$}[\atright, \solidarrow]
\enddc
\end{gathered}
\EqTag{Intro222}
\end{equation}
	defines  $\CalB=\CalI/H$ as  a B\"acklund transformation between $\CalI_1=\CalI/G_1$ and  $\CalI_2=\CalI/G_2$.
\end{ThmA}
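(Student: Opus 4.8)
The plan is to assemble the square \EqRef{Intro21} in three stages — first as a diagram of manifolds, then as a diagram of EDS morphisms, and finally, under transversality, as a diagram of integrable extensions — after which the last assertion is immediate from the definition of a B\"acklund transformation. \emph{Manifold level.} Regularity of the $G_1$-, $G_2$- and $H$-actions makes $M/G_1$, $M/G_2$ and $M/H$ smooth manifolds and the orbit projections $\bfq_{G_1}$, $\bfq_{G_2}$, $\bfq_H$ smooth surjective submersions. Since $H$ is a subgroup of each $G_i$, every $H$-orbit lies in a $G_i$-orbit, so $\bfq_{G_i}$ factors uniquely through $\bfq_H$, giving a map $\bfp_i\colon M/H\to M/G_i$ with $\bfp_i\circ\bfq_H=\bfq_{G_i}$. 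Composing $\bfq_{G_i}$ with a local section of the submersion $\bfq_H$ shows $\bfp_i$ is smooth, and because $\bfp_i\circ\bfq_H=\bfq_{G_i}$ is a surjective submersion and $\bfq_H$ is surjective, $\bfp_i$ is itself a surjective submersion. This establishes the manifold part of the first claim.

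\emph{EDS level.} For a regular action of a group $K\in\{G_1,G_2,H\}$ preserving $\CalI$, recall that the $K$-invariant, $\bfq_K$-semibasic forms in $\CalI$ constitute a subspace $\CalI_{K\text{-bas}}$ which descends to $M/K$ and generates a differential ideal $\CalI/K$ with $\bfq_K^*(\CalI/K)=\CalI_{K\text{-bas}}\subseteq\CalI$, using that $\bfq_K^*$ is injective on forms pulled back from $M/K$. Then $\bfq_{G_i}$ and $\bfq_H$ are morphisms of EDS by construction. For $\bfp_i$: since $H\subseteq G_i$ a $G_i$-invariant form is $H$-invariant, and since the fibres of $\bfq_H$ lie inside those of $\bfq_{G_i}$ a $\bfq_{G_i}$-semibasic form is $\bfq_H$-semibasic; hence $\CalI_{G_i\text{-bas}}\subseteq\CalI_{H\text{-bas}}$. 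Thus for any generator $\bar\beta$ of $\CalI/G_i$ we have $\bfq_H^*(\bfp_i^*\bar\beta)=\bfq_{G_i}^*\bar\beta\in\CalI_{G_i\text{-bas}}\subseteq\CalI_{H\text{-bas}}=\bfq_H^*(\CalI/H)$, and injectivity of $\bfq_H^*$ forces $\bfp_i^*\bar\beta\in\CalI/H$. So each $\bfp_i$ is an EDS morphism, and since the underlying manifold diagram commutes, \EqRef{Intro21} is a commutative diagram of EDS.

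\emph{Integrable extensions.} Now assume the $G_i$-actions are transverse to $\CalI$; the $H$-action is then transverse to $\CalI$ as well, its orbit distribution being contained in those of $G_1$ and $G_2$. I would invoke the reduction theorem of \cite{anderson-fels:2014a}: a regular symmetry foliation transverse to $\CalI$ reduces $(\CalI,M)$ so that the quotient projection is an integrable extension; applied to $\bfq_{G_i}$ and $\bfq_H$ this handles the vertical arrows. For $\bfp_i$, the $\bfp_i$-fibres are the leaves of a regular foliation $\CalF_i$ of $M/H$ — the $\bfq_H$-images of the $G_i$-orbits — preserving $\CalI/H$, transverse to it, and with quotient $(\CalI/G_i,M/G_i)$, so the same theorem shows $\bfp_i\colon(\CalI/H,M/H)\to(\CalI/G_i,M/G_i)$ is an integrable extension. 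Equivalently, one chooses a $G_i$-adapted coframe on $M$ refining $\bfq_{G_i}^*$ of a coframe on $M/G_i$, so that transversality provides $\bfq_{G_i}$-semibasic forms $\theta^a$ generating $\CalI$ modulo the $G_i$-vertical forms with $d\theta^a\equiv0$ modulo $\CalI$ and those vertical forms; the $G_i$-vertical forms that are $H$-invariant and $\bfq_H$-semibasic descend to $M/H$ and, with $\bfp_i^*(\CalI/G_i)$, give the generators of $\CalI/H$ and the closure relations required of an integrable extension. Finally, by definition a B\"acklund transformation is a system $(\CalB,N)$ with submersions onto two systems each realising $\CalB$ as an integrable extension; taking $\CalB=\CalI/H$, $N=M/H$, $\CalI_i=\CalI/G_i$, $N_i=M/G_i$ and the maps $\bfp_i$, diagram \EqRef{Intro222} is exactly such a configuration.

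\emph{Main obstacle.} The substantive point is that $\bfp_i$ — not merely $\bfq_{G_i}$ and $\bfq_H$ — is an integrable extension. This needs the reduction machinery of \cite{anderson-fels:2014a} to apply to the foliation $\CalF_i$ on $M/H$, equivalently the explicit construction of an adapted coframe on $M/H$ with the correct new generators and the verification that the structure equations descend. The delicate ingredients are that transversality of $G_i$ to $\CalI$ genuinely passes to $\CalF_i$ on the quotient, that the new generators are pointwise independent so the extension has the expected rank (here one uses regularity of the actions together with $H\subseteq G_i$), and that the notions ``$\bfq_{G_i}$-semibasic and $G_i$-invariant'' and ``$\bfq_H$-semibasic and $H$-invariant'' mesh compatibly across the factorization $\bfq_{G_i}=\bfp_i\circ\bfq_H$.
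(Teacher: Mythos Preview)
Your sketch is essentially correct and aligns with the approach the authors take, but note that this paper does not itself prove Theorem~A: it is quoted from the companion paper \cite{anderson-fels:2014a}, and the only argument offered here is in Section~\StRef{prelim}, where the authors remark that transversality of $G_i$ to $\CalI$ together with Theorem~2.1 of \cite{anderson-fels:2014a} makes each $\bfq_{G_i}$ and $\bfq_H$ an integrable extension, and that Theorem~3.4 of \cite{anderson-fels:2014a} then gives that \emph{all} maps in \EqRef{Intro21}, including the $\bfp_i$, are integrable extensions.

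Your three-stage assembly (manifold level, EDS level, integrable-extension level) is exactly the right decomposition, and your identification of the ``main obstacle'' --- that the nontrivial content lies in showing $\bfp_i$ (not just $\bfq_{G_i}$, $\bfq_H$) is an integrable extension --- matches precisely what the paper defers to Theorem~3.4 of \cite{anderson-fels:2014a}. Your proposed mechanism for this step, namely pushing the $G_i$-transversality down to a foliation $\CalF_i$ on $M/H$ and reapplying the reduction theorem, or equivalently descending an adapted coframe, is indeed how that result is established in the companion paper. The one place to be slightly careful is your claim that the $\bfp_i$-fibres are leaves of a foliation to which the symmetry-reduction theorem applies directly: in general $G_i/H$ need not act on $M/H$, so one works either with the residual foliation (as you do) or, more carefully, with the semibasic/basic form calculus relative to the factorization $\bfq_{G_i}=\bfp_i\circ\bfq_H$; the latter is closer to how \cite{anderson-fels:2014a} actually argues.
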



This paper is organized as follows. Section \StRef{prelim} provides a basic review of the definitions and theory required to construct diagram \EqRef{Intro21}. In Section \StRef{Examples} we then use Theorem A and the methods from Section \StRef{prelim} to give explicit examples of B\"acklund transformations in local coordinates. In each example we shall begin with a differential system $\CalI$  given as a direct sum $\CalK_1 +\CalK_2$ on a product manifold $M_1\times M_2$.  
We define a  group action $H$ acting diagonally on $M_1\times M_2$ which is a symmetry group of $\CalI$ and 
	which acts transversely to $\CalI$. We then calculate the reduced differential system $\CalB = \CalI/H$ in local coordinates.
	We then pick two more  Lie symmetry  groups  $G_1$ and $G_2$  of  $\CalI$, with  $H\subset G_1 \cap G_2$, and 
	calculate  the reduced differential systems $\CalI_1 = \CalI/G_1$ and $\CalI_2 = \CalI/G_2$. 
	The  orbit projection maps $\bfp_i:(\CalB,N) \to (\CalI_i,N_i)$, which define the sought-after B\"acklund transformation, are also given in local coordinates.

The examples in Section \StRef{Examples} satisfy a property called Darboux integrability. In Section \StRef{SRDI} we show how Darboux integrable systems 
are constructed using symmetry reduction as well as how to compute the fundamental invariants for these systems using symmetry reduction (Theorem \StRef{Dreduce} and Theorem \StRef{Rdiag}). In Section \StRef{ExP2} we  revisit the examples from Section \StRef{Examples} from the perspective of Darboux integrability. The intermediate integrals are computed using Theorem \StRef{Dreduce}.   The fundamental invariant called the {\deffont Vessiot algebra}  (or Vessiot group) is also determined, using  Theorem \StRef{Rdiag},  for each example.

Section 6 provides some basic theory relating the fundamental invariants of a Darboux integrable system to those of an integrable extension.  Section 7 combines the theory from Section 6 and the geometry of the double fibration for a B\"acklund transformation to show that {\it every B\"acklund transformation  constructed  in  \cite{Clelland-Ivey:2009a} and \cite{zvyagin:1991a}	between hyperbolic Monge-Amp\`ere systems and the hyperbolic Monge-Amp\`ere system for the wave equation arises by symmetry reduction.}  For B\"acklund transformation of this type, Theorem \StRef{NoTom} shows the Vessiot algebra for $\CalB$ is isomorphic to a 2-dimensional subalgebra of the
Vessiot algebra of $\CalI_2$. We give an explicit example of a Darboux integrable Monge-Amp\'ere equation with $\mathfrak{s} \mathfrak{o}(3,\real)$ Vessiot algebra. Because $\mathfrak{s} \mathfrak{o}(3,\real)$ has no 2-dimensional subalgebras, Theorem \StRef{NoTom} implies that $\CalI_2$ can not admit such a B\"acklund transformation to the wave equation.

The proof of Theorem \StRef{UMA} uses a novel condition, called {\deffont maximal compatibility} (and defined in Section \StRef{IEDI}), which insures that an integrable extension between Darboux integrable systems arises by symmetry reduction. It is our belief that this condition can be generalized to other settings which will broaden the scope of Theorem A as a mechanism underlying other B\"acklund transformations.

The calculations for this paper were performed using the Maple DifferentialGeometry package. 
The current release of the DifferentialGeometry package, as well as worksheets which support our results, can 
be downloaded from ${\bf {\rm http\!:\!/\!/digitalcommons.usu.edu/dg/}}\, $.



\section{Preliminaries}	\StTag{prelim}

In this section we  gather together a number of definitions and basic results on integrable extensions and reductions of  exterior differential systems. These are needed to construct diagram \EqRef{Intro21} and hence B\"acklund transformations by Theorem A. 	Conventions and some additional results from \cite{anderson-fels:2005a} are used here,	see also \cite{anderson-fels:2013a}, \cite{anderson-fels:2014a}.

\subsection{Extensions and Reductions of Exterior Differential Systems}\StTag{EREDS}
		An exterior differential system (EDS) $\CalI$ on a manifold $M$  is an ideal $\CalI\subset \Omega^*(M)$ which
		is closed with respect to exterior differentiation (see \cite{bryant-chern-gardner-griffiths-goldschmidt:1991a}). We assume that an EDS $\CalI$ has constant rank in the sense that each one  
	of its homogeneous components $\CalI^p \subset \Omega^p(M)$  coincides with the sections 
	$\CalS(I^p)$ of a  constant rank subbundle $I^p \subset \Lambda^p(M)$. If $\CalA$ is a subset of $\Omega^*(M)$, 
	we let  $\langle \CalA \rangle_{\text{alg}} $ and $\langle \CalA \rangle_{\text{\rm diff}} $ denote the algebraic and differential ideals  generated by $\CalA$. As usual, a differential system $\CalI$ is  called a {\deffont Pfaffian system} if there is  a constant rank subbundle $I\subset   \Lambda^1(M)$  
	such that $\CalI = \langle \CalS(I) \rangle_\text{diff}$.  As is customary, the subbundle $I$ shall also  be referred to as a  Pfaffian system.

An EDS  $\CalE$ on a manifold $N$	is called an {\deffont integrable extension}  of  the EDS $\CalI$ on a manifold $M$ if there exists a submersion $\bfp:N \to M$ and a subbundle  $J \subset  \Lambda^1(N)$, where $\ann(J)$ is transverse to the fibres of $\bfp:N \to M$, such that 
\begin{equation}
	\CalE = \langle \CalS(J) \cup  \bfp^*(\CalI) \rangle_\text{alg}.
\EqTag{IntExtDef}
\end{equation}
Here $\ann(J)$ is the subbundle of vectors in $TN$ which annihilate the 1-forms in $J$. A subbundle $J$  satisfying these conditions is called an {\deffont admissible} subbundle for the integrable extension $\CalE$. In particular, $\CalE$ is an integrable extension of $\CalI$ if there exists $\dim N -\dim M$ 1-forms $ \{\, \xi^u \,\}$  on $N$
	which define a local basis for $\CalS(J)$ and satisfy
\begin{equation}
	d\, \xi^u  \equiv 0  \mod \{\bfp^*(\CalI) ,\, \xi^u \} ,
\EqTag{IntExt4}
\end{equation}
so that, locally,
\begin{equation}
\CalE = \langle\ \xi^u ,\,  \bfp^* \CalI \ \rangle_{\text{alg}}.
\EqTag{CheckIE}
\end{equation}
In the special case where $\CalE$ is a Pfaffian system, then $E=\spn\{\, \xi^u \, \} + \bfp^* I^1 $  and
\begin{equation}
\xi^u \in E',
\EqTag{IEP}
\end{equation}
where $E'$ is the derived system of $E$. For additional details see \cite{bryant-griffiths:1995a} and Section 2.1 of \cite{anderson-fels:2014a}.

	Given  a smooth submersion $\bfp :N \to M$  and  an EDS $\CalI$ on $N$,  
	then the {\deffont reduction of $\CalI$} with respect to $\bfp$, denoted by $\CalI/\bfp$, is the EDS on $M$ defined by
\begin{equation}
	\CalI/\bfp = \{ \ \theta \in \Omega^*(M) \ | \ \bfp^* \theta \in \CalI \ \}.
\EqTag{Imodp}
\end{equation}
For example, if $\bfp :(\CalE,N) \to (\CalI,M)$ is an integrable extension, then $\CalE/\bfp =\CalI$. If the fibres of $\bfp$ are connected, then the reduction $\CalI/\bfp$ can be computed using Corollary II.2.3 of \cite{bryant-chern-gardner-griffiths-goldschmidt:1991a}.

	We now specialize the general reduction \EqRef{Imodp} to the case of reduction by a Lie symmetry group of $\CalI$. Let $G$ be a finite dimensional Lie group acting on $M$ with left action $\mu:G \times M \to M$.
We denote by  $\Gamma_G$ the Lie algebra (of vector fields) of infinitesimal generators for the action of $G$, and we let $\bfGamma_G \subset TM$ be the integrable distribution generated by the point-wise span of $\Gamma_G$. 
	We assume that all actions are  {\deffont regular} in the sense that the orbit space $M/G$ has a smooth manifold structure for which the  canonical projection $\bfq_G \: M \to M/G$ is a smooth submersion.  With this assumption we have $\bfGamma_G = \ker (\bfq_{G*})$.  
		
	A group $G$ acting on $M$ is a {\deffont symmetry group of an EDS $\CalI$} if,
	for each $g\in G$ and $\theta \in \CalI$,   $\mu_g^*(\theta) \in \CalI$, where $\mu_g(x) = \mu(g,x)$ for $x \in M$. Under these circumstances and with the hypothesis that
	the action is regular, we define the  {\deffont symmetry reduction of $\CalI$  by G} to be
\begin{equation}
	\CalI/G = \CalI/\bfq_G=  \{\ \bar \theta \in \Omega^*(M/G) \ | \ \bfq_G^*(\bar \theta ) \in \CalI  \ \}.
\EqTag{IGdef}
\end{equation}
 	In other words, $\CalI/G$ is the reduction given by equation \EqRef{Imodp} with respect to the submersion $\bfq_G:M\to M/G$. 
	Utilizing the $G$-invariance of $\CalI$, the determination of local generators for $\CalI/G$ is now a purely linear algebraic computation. See Section \StRef{LGE} below or \cite{anderson-fels:2005a}.


	
	A symmetry group $G$ of an EDS $\CalI$ is said to be {\deffont transverse} to $\CalI$ if
\begin{equation}
	 \ann(I^1) \cap \bfGamma_G = 0.
\EqTag{Itrans}
\end{equation}
	This transversality condition holds for all the examples we consider.  
The importance of this condition is reflected in the fact that {\it if $G$ is transverse to $\CalI$, then $\bfq_G:(\CalI,M)\to ( \CalI/G,M/G)$ is an integrable extension} (see Theorem 2.1 in \cite{anderson-fels:2014a}). 
The hypothesis in Theorem A for constructing B\"acklund transformations requires that the actions of the groups $G_1$ and $G_2$ satisfy the transversality condition \EqRef{Itrans}. Theorem 3.4 in \cite{anderson-fels:2014a} then implies that all maps in diagram \EqRef{Intro21} are integrable extensions. 

As a cautionary remark, we observe that if $\CalI$ is a $G$-invariant Pfaffian differential system,then it is generally not true that $\CalI/G$ is a Pfaffian system. However, if $\CalI$ is a Pfaffian system and $G$ acts transversely to the derived system $I'$ then $\CalI/G$ is a constant rank Pfaffian system \cite{anderson-fels:2005a}.



\subsubsection{Constructing Local Generators for $\CalI/\Gamma_G$}\StTag{LGE}

A Lie algebra of vector fields $\Gamma_G$ on a  manifold $M$ is a {\deffont symmetry algebra} of an EDS $\CalI$ if $\CalL_X \CalI \subset \CalI$ for all vector fields $X\in \Gamma_G$  where $\CalL_X$ is the Lie derivative along $X$. The Lie algebra $\Gamma_G$ is said to act regularly if the canonical projection  $\bfq_{\Gamma_G} : M \to M/\bfGamma_G$ to the leaf space $M/\bfGamma_G$ (of maximal integral manifolds of $\bfGamma_G$) is a smooth submersion. We shall write $\CalI/\Gamma_G$ for the reduction of $\CalI$ in this case. If $G$ is a symmetry group of $\CalI$ and $\Gamma_G$ are the infinitesimal generators, then $\Gamma_G$ is a symmetry algebra of $\CalI$. When the orbits of $G$ are connected, the leaves of $\bfGamma_G$ are identical to the orbits of $G$.

A differential form $\theta \in \Omega^p(M)$  is {\deffont  $\Gamma_G$-basic} if there exists a form $\bar \theta \in \Omega^p(M/\bfGamma_G)$ such that  $\theta=\bfq_{\Gamma_G}^*(\bar \theta ) $. 

In order to compute $\CalI/\Gamma_G$ from the definition in equation \EqRef{Imodp} or \EqRef{IGdef}  we need to determine the $\Gamma_G$-basic forms in $\CalI$.
There are two conditions on $\theta\in \Omega^p(M)$ which determine whether it is $\Gamma_G$-basic. First, 
a differential form $\theta \in \Omega^p(M)$ is {\deffont $\Gamma_G$ semi-basic} if
\begin{equation*}
	X\hook \theta = 0  \quad\text{for all  $X \in \Gamma_G$}.
\end{equation*}
Second, a form $\theta \in \Omega^p(M)$ is {\deffont $\Gamma_G$-invariant} if
\begin{equation*}
	\CalL _X \, \theta =  0
\	\quad\text{for all  $X \in \Gamma_G$}.
\end{equation*}
A form $\theta \in \Omega^p(M)$ is $\Gamma_G$-basic if and only if it is  $\Gamma_G$ semi-basic and $\Gamma_G$-invariant, see Appendix A in \cite{anderson-fels:2005a}.

An $n$-dimensional Lie algebra of vector fields $\Gamma_G$ on an $m$-dimensional manifold $M$ is said to act freely if $\Gamma_G= \spn \{ X_i \}_{1\leq i \leq n}$ and $X_i$ are point-wise linearly independent. We also say that $\Gamma_G$ acts transversely to $\CalI$ if equation \EqRef{Itrans} is satisfied.  We now compute the reduction $\CalI/\Gamma_G$ when $\Gamma_G$ acts freely and transversally to $\CalI$. 

Let  $U$ be an open neighbourhood in $M$ and let $\{ X_i \}_{1\leq i \leq n}$ be a basis for the Lie algebra  $\Gamma_G$ which, by the hypothesis of a free action, are linearly independent at each point of $U$.  We may assume that $U$ has been chosen so that there exists $m-n$ independent functions $ f^1,f^2,\ldots, f^{m-n}$ which satisfy $X_i(f^a) =0$.
The $\Gamma_G$ invariance of the functions $f^a$ allow us, by a slight abuse of
notation, to write $f^a :U/\bfGamma_G \to \real$ and use $(f^a)$ as a set of coordinates on $U/\bfGamma_G$. We may also assume that $U$ has been chosen so that there exists a coframe 
$\{ \tilde \theta^A, \tilde \omega^u \}, 1\leq A\leq K, 1\leq u \leq m-k $ on $U$  such that
\begin{equation}
I^1|_U=\spn \{ \ \tilde \theta^A  \ \}, \quad  1\leq A\leq k.
\EqTag{LGI}
\end{equation}

The transversality condition \EqRef{Itrans} implies that there exists a subset of 1-forms $\{\theta^i\}_{1\leq i \leq n}  $ from those in equation \EqRef{LGI} satisfying
$$
\theta^i(X_j) = \delta^i_j.
$$
This immediately implies that $k \geq n$, that is, $\rank(I^1) \geq \dim \Gamma_G$.  The generators for $\CalI^1|_U$ in equation \EqRef{LGI} can then be replaced by
$$
I^1|_U= \spn\{ \ \theta^i\, , \ \hat \theta^a\ \},\quad 1\leq i \leq n, \ n+1\leq a \leq k.
$$
By the simple linear change of 1-forms,
$$
\theta_{\Gamma_G,\semibasic}^a = \hat \theta^a  - \theta^a(X_i) \theta^i
$$
we have
$$
I^1|_U= \spn\{ \ \theta^i\, , \ \theta_{\Gamma_G,\semibasic}^a \ \}.
$$
The 1-forms $\theta_{\Gamma_G,\semibasic}^a$ are now $\Gamma_G$ semi-basic for the projection $\bfq_{\Gamma_G}:U \to U/\bfGamma_G$. 

Next we construct the $\Gamma_G$ semi-basic 1-forms $\omega^u_{\Gamma_G,\semibasic}= \tilde \omega^u - \tilde \omega^u(X_i) \theta^i$ to obtain the {\deffont $\CalI-\Gamma_G$ adapted coframe} on $U$,
\begin{equation}
T^*U = \spn \{\ \theta^i\, ,\ \theta^a_{\Gamma_G,\semibasic}\, , \ \omega^u_{\Gamma_G,\semibasic}\ \}.
\EqTag{O1basis}
\end{equation}
Using the coframe in equation \EqRef{O1basis}, any $\tilde \tau\in \CalI^2|_U$ may be written
$$
\tilde \tau= T_{uv} \, \omega^u_{\Gamma_G,\semibasic}\wedge \omega^v_{\Gamma_G,\semibasic}\quad \mod \CalI^1|_U.
$$
Then $\tau_{\Gamma_G,\semibasic} = T_{uv}\, \omega^u_{\Gamma_G,\semibasic}\wedge \omega^v_{\Gamma_G,\semibasic} \in \CalI^2|_U$  is $\Gamma_G$ semi-basic. In the case where $\CalI$ is generated algebraically by 1-forms and 2-forms (as in all the examples), this algorithm produces local algebraic generators for $\CalI|_U$,
\begin{equation}
\CalI|_U = \langle \ \theta^i\,,\ \theta^a_{\Gamma_G,\semibasic}\,,\ \tau^\alpha_{\Gamma_G,\semibasic} \ \rangle_{\text {alg}}.
\EqTag{sbg}
\end{equation}
See \cite{anderson-fels:2005a} for the case where $\CalI$ contains generators of higher degree.

We describe two ways to use the generators for $\CalI$ in \EqRef{sbg} to
compute local generators for the reduction $(\CalI/\Gamma_G)_{U/\bfGamma_G}$. The first method (see \cite{anderson-fels:2005a}) is to choose a cross-section $\sigma :U/\bfGamma_G  \to U$, in which case
\begin{equation}
(\CalI/\Gamma_G)|_{U/\bfGamma_G} = \langle\ \sigma^*\theta^a_{\Gamma_G,\semibasic}\, ,\ \sigma^* \tau^\alpha_{\Gamma_G,\semibasic} \ \rangle_{\text {alg}}.
\EqTag{semib}
\end{equation}
This method is demonstrated in equation \EqRef{ExA3} of Example \StRef{ExA1}.


An alternative way to compute $(\CalI/\Gamma_G)|_{U/{\bfGamma_G}}$ is to directly solve the algebraic conditions $\bfq_{\Gamma_G}^* \bar \theta \in \CalI|_U$ 
for $\bar \theta \in (\CalI/\Gamma_G)|_{U/\bfGamma_G}$. This method is particularly simple if a set of generators for $\CalI|_U$ has been found where the semi-basic forms in \EqRef{semib} are in fact $\Gamma_G$-basic. See equations \EqRef{sb21} and \EqRef{sb21b} of Example \StRef{ExA1}.

The proof of Theorem 2.1 in \cite{anderson-fels:2014a} shows how to construct local generators for $\CalI/\Gamma_G$ without the hypothesis that the action of $\Gamma_G$ (or $G$) is free. See also Theorem 2.4 in \cite{anderson-fels:2014a} for 
more detailed information about the structure equations of $\CalI/G$ when the action of $G$ is free.


\section{Examples: B\"acklund Transformations by Quotients}	\StTag{Examples}

In Examples \StRef{ExA1} and \StRef{ExA2}, we show how Theorem A gives the B\"acklund transformations constructed  in \cite{Clelland-Ivey:2009a} and \cite{zvyagin:1991a} between various $f$-Gordon equations. Example \StRef{ExB}  shows that Theorem A can be used to construct B\"acklund transformations for a class of non-Monge-Amp\'ere scalar PDE in the plane. B\"acklund transformations are also constructed using Theorem A for the $A_2$ Toda system in Example  \StRef{ExD}, and  for an over-determined system of PDE  in Example \StRef{ExE}. The final Example \StRef{ExC} contains a positive integer parameter from which a chain of B\"acklund transformations can be constructed.

\medskip

In order to give explicit formulas in the examples we now give local coordinate descriptions of the orbit projection maps $\bfp_a: N \to N_a$  in diagram \EqRef{Intro222} when the actions are given by the Lie algebras $\Gamma_H$ and $\Gamma_{G_a}$. Let $x^i$ be local coordinates on an open set $U \subset M$, let $g_a^\alpha(x^i)$ be an independent set of $\Gamma_{G_a}$ invariants on $U$, and let $h^A(x^i)$ be an independent set of $\Gamma_H$ invariants on $U$. The projection maps $\bfq_H:U\to U/\Gamma_H\subset N$ and $\bfq_{\Gamma_{G_a}}:U\to U/\Gamma_{G_a}\subset N_a$ are then
\begin{equation}
\bfq_{\Gamma_H} = [\ V^A = h^A (x^i) \  ] \ \ {\rm and} \quad \bfq_{\Gamma_{G_a}}= [\ u_a^\alpha = g^\alpha_a(x^i)\ ],
\EqTag{bfqs}
\end{equation}
where $V^A$ and $u^a_\alpha$ are local coordinates on $U/\Gamma_H$ and $U/\Gamma_{G_a}$ respectively.
Since $\Gamma_H \subset \Gamma_G$, every $\Gamma_{G_a}$ invariant $g^\alpha_a(x^i)$ is $\Gamma_H$ invariant, and therefore may be written as
a function of $h^A(x^i)$, that is,
\begin{equation}
g^\alpha_a(x^i)= F_a^\alpha( h^A(x^i)).
\EqTag{FRDI}
\end{equation}
The maps $\bfp_a:U/\Gamma_H \to U/\Gamma_{G_a}$, in the coordinates $u_a^\alpha$ and $V^A$ introduced in equation \EqRef{bfqs}, are then
\begin{equation}
\bfp_a= [ \ u_a^\alpha =  F_a^\alpha ( V^A ) \ ].
\EqTag{pinc}
\end{equation}
Alternatively, if $\sigma:U/\Gamma_H \to U$ is a local cross-section, where $\sigma=[ x^i =\sigma^i (V^A)]$, then the maps $\bfp_a=\bfq_{\Gamma_{G_a}}\circ \sigma$ are obtained from \EqRef{bfqs} as
\begin{equation}
\bfp_a= [ \ u_a^\alpha= g^\alpha_a ( \sigma^i(V^A)) \ ].
\EqTag{csa}
\end{equation}

\subsection{Examples}

\begin{Example}  
\StTag{ExA1}

In this first example we show how Theorem A can be used to obtain the
well-known B\"acklund transformation between Liouville's equation and the wave equation.
All the details of this group theoretic construction are provided. The computations are summarized by diagram \EqRef{ExA51} below.

We start by letting  $\CalI$ be the sum \footnote{\label{fn1}The sum of two EDS is defined in Section \StRef{SRDI}.}  $\CalI=\CalK_1+\CalK_2$, where $\CalK_1$ and $\CalK_2$ are the standard contact systems on the jet spaces $J^2(\real,\real)$. In coordinates $(y,  w, w_y, w_{yy}, x, v, v_x, v_{xx} )$ on $J^2(\real, \real) \times J^2(\real, \real) $ we have 
\begin{equation*}
\CalI=\CalK_1+\CalK_2 = \langle\, \theta_w=dw -w_y\, dy,\,  \theta_{w_y}= dw_y -w_{yy}\,dy, \, \theta_v= dv - v_x\,dx,\,  \theta_{v_x} =dv_x - v_{xx}\, dx\,   \rangle_{\text{diff}}.
\end{equation*}
The total derivative vector fields on $J^2(\real, \real) \times J^2(\real, \real) $ (which lie in the annihilator of $I$) are
\begin{equation}
		D_y =  \partial_y  +w_y\,\partial_w  +w_{yy}\, \partial_{w_y} 
		\quad\text{and}\quad 
	D_x =  \partial_x  +v_x\,\partial_v+v_{xx} \,\partial_{v_x}.
\EqTag{ExA1}
\end{equation}
For the computations that follow we let  $M \subset J^2(\real, \real) \times J^2(\real, \real) $  be the open set where $v + w >0$, $v_x>0$ and $w_y>0$.

To apply Theorem A we start with  the Lie algebra of vector fields $\Gamma_H$ on  $M$ given by
\begin{equation}
\Gamma_H=\spn \{\	X_1 = \partial_w - \partial_v,   \quad   X_2 =   w\,\partial_w + w_y \partial_{w_y} + w_{yy} \partial_{w_{yy}}+v\,\partial_v+v_x \partial_{v_x}+v_{xx} \partial_{v_{xx}}  \ \}.
\EqTag{ExA2}
\end{equation}
The vector fields in $\Gamma_H$ are symmetries of $\CalI$. Indeed, $X_2$ is just the prolongation of a simultaneous scaling in $w$ and $v$. The algebra $\Gamma_H$ acts freely and transversely to the derived system $I'$ on  $M$. Thus $\CalB=\CalI/\Gamma_H$ will be a Pfaffian system \cite{anderson-fels:2005a}. 
	
To find the reduction $\CalI/\Gamma_{H}$, we begin by computing the projection map $\bfq_H:M \to N= M/\Gamma_H$ as in equation \EqRef{bfqs}. The lowest order invariants of $\Gamma_H$ acting on $M$ are 
\begin{equation}
h^1=x, \ h^2=y, \ 	 h^3=\log \dfrac{v_x}{ v + w},\ {\rm and} \quad  h^4= \log \dfrac{w_y}{v+w}.
\EqTag{Hinv}
\end{equation}	
The total vector fields $D_x$ and $D_y$ in \EqRef{ExA1} commute with the vector fields $\Gamma_H$ and therefore
the $D_x$ and $D_y$ derivatives of the invariants in \EqRef{Hinv} produce the higher order $\Gamma_H$ invariants
\begin{equation}
h^5= D_x(h^3) = \frac{v_{xx}}{v_x}-\frac{v_x}{v+w}\ , \quad 
h^6=D_y(h^4) = \frac{w_{yy}}{w_y}-\frac{w_y}{v+w}.
\EqTag{h56}
\end{equation}
By virtue of equation \EqRef{bfqs},
the projection map $\bfq_H \: M \to N$ can now be written by using the invariants in equations \EqRef{Hinv} and \EqRef{h56} and coordinates $(x, y$, $V$, $ V_x$, $W$, $W_y)$ on $N$, as
\begin{equation}
\begin{aligned}
	\bfq_H=
	[\,  x  = x,  \, y = y,\,  V  =\log \frac{v_x}{v+w},\, W = \log \frac{w_y}{v+w},  \, V_x = \frac{v_{xx}}{v_x}-\frac{v_x}{v+w},\,    W_y = \frac{w_{yy}}{w_y}-\frac{w_y}{v+w}\, ].
\end{aligned}
\EqTag{bfpH}
\end{equation}

We now compute the reduced differential system $\CalI/\Gamma_H$ utilizing equation \EqRef{sbg} in the algorithm from Section \StRef{LGE}.  Since $\CalI/\Gamma_H$ is a Pfaffian system we only need to compute the $\Gamma_H$ semi-basic 1-forms in $\CalI$. These are
\begin{equation}
I^1_{\Gamma_H,\semibasic} =\spn \{ \ \theta^1_{\Gamma_H,\semibasic} = \frac{1}{v_x} \theta_{v_x}-\frac{1}{w_y} \theta_{w_y}\ , \quad
 \theta^2_{\Gamma_H,\semibasic}= \frac{1}{v_x} \theta_{v_x}+ \frac{1}{w_y} \theta_{w_y} - \frac{2}{v+w} (\theta_v+ \theta_w)
\ \}.
\EqTag{SM1F}
\end{equation}
The pullback of the 1-forms in \EqRef{SM1F} with the cross-section of $\bfq_H$ in \EqRef{bfpH} given by
\begin{equation}
\begin{aligned}
& \sigma(x,y,V,W,V_x,W_y)=  \\
&\qquad\qquad (y = y,w = 0, w_y = \rme^W, w_{yy} =\rme^W W_y+\rme^{2W}, x = x, v = 1, v_x = \rme^V, v_{xx} =\rme^V V_x +\rme^{2V} ),
\end{aligned}
\EqTag{sigmaex1}
\end{equation}
produces the following generators (see equation \EqRef{semib})
\begin{equation}
\begin{aligned}
	B =  \spn \{\ \beta^1 & = \frac{1}{2}\left(\sigma^* \theta^1_{\Gamma_H,\semibasic} + \sigma^* \theta^2_{\Gamma_H,\semibasic}\right) =dV   -V_x  \,dx + e^W dy  ,\\  \beta^2 & = \frac{1}{2}\left( \sigma^* \theta^2_{\Gamma_H,\semibasic} - \sigma^* \theta^1_{\Gamma_H,\semibasic}\right) = dW  + e^V\,dx - W_ydy  \ \} 
\end{aligned}
\EqTag{ExA3}
\end{equation} 
for the rank 2 Pfaffian system  $B=I/\Gamma_H$. The standard  PDE system which gives rise to the Pfaffian system \EqRef{ExA3} is easily seen to be
\begin{equation}
	V_y = - e^W,\quad  W_x = -e^V.
\EqTag{PDEH1}
\end{equation}

As an alternative method for computing $\CalB$ (which by-passes the differential system formulation), we can compute $D_y(h^3)$ and $D_x(h^4)$ from \EqRef{Hinv} to obtain the following syzygies among the  $\Gamma_H$ invariants on $M$
\begin{equation}
D_y (h^3)= -\frac{w_y}{v+w}= - \rme ^{h^4} , \quad D_x (h^4) = - \frac{v_x}{v+w}  = - \rme^{h^3}.
\EqTag{QDES}
\end{equation}
If we use $V=h^3$ and $W=h^4$ from \EqRef{bfpH}, then equations \EqRef{QDES} agree with the differential equations in \EqRef{PDEH1} arising from the reduced EDS representing the B\"acklund transformation $\CalB$.

In order to construct a B\"acklund transformation from  $\CalB$ we first add  to the  Lie algebra 	$\Gamma_H$  in \EqRef{ExA2} the vector field
\begin{equation}
Z_1 =   \partial_w+ \partial_v,
\EqTag{defZ1}
\end{equation}
and let  $\Gamma_{G_1}= \spn \{ X_1, X_2, Z_1\}=\spn\{ \partial_w, \partial_v, X_2 \} $.

Since $\Gamma_H \subset \Gamma_{G_1}$ we can use the $\Gamma_H$ invariants in equation \EqRef{Hinv} to compute the $\Gamma_{G_1}$ invariants. Equations \EqRef{Hinv} and \EqRef{defZ1} give $Z_1(h^3)=(v+w)^{-2}$ and $Z_1(h^4)=(v+w)^{-2}$ from which we find that the function $g^3_1$ on $M$ defined by
\begin{equation}
g^3_1=h^3-h^4= \log \dfrac{v_x}{ v + w}-  \log \dfrac{w_y}{v+w}=\log {v_x}-\log {w_y}
\EqTag{Fp1}
\end{equation}
is $Z_1$ invariant, and hence $\Gamma_{G_1}$ invariant. Again, $D_x$ and $D_y$ are invariant vector fields with respect to $\Gamma_{G_1}$ so that the other $\Gamma_{G_1}$ invariants on $M$ are 
\begin{equation}
g^1_1=x,g^2_1=y, g^4_1=D_x(g^3_1)=v_{xx}v_x^{-1},\ {\rm  and} \quad g^5_1=D_y(g^3_1)=-w_{yy}w_y^{-1}.
\EqTag{MG1}
\end{equation}
In terms of these $\Gamma_{G_1}$ invariants and coordinates $(x,y,u_1,u_{1x},u_{1y})$ on $N_1$ in equation \EqRef{bfqs}, the map $\bfq_{\Gamma_{G_1}}:M\to N_1$ is 
\begin{equation}
\bfq_{\Gamma_{G_1}}= [  x=x,y=y,u_1=\log {v_x}-\log{w_y},
u_{1x} = \frac{v_{xx}}{v_x}, u_{1y}= -\frac{w_{yy}}{w_y} ].
\EqTag{QG1}
\end{equation}

To complete our coordinate description of the left-hand side of diagram \EqRef{Intro21} we 
need to calculate $\bfp_1$ and $\CalI/\Gamma_{G_1}$. We use equations \EqRef{FRDI} and \EqRef{pinc} to give a coordinate description of $\bfp_1$. Equation \EqRef{Fp1} expresses the $\Gamma_{G_1}$ invariant  $g^3_1$ in terms of the $\Gamma_H$ invariants in \EqRef{Hinv}. We also have from equations \EqRef{MG1}, \EqRef{Hinv} and \EqRef{h56}
\begin{equation}
g^4_1= \frac{v_{xx}}{v_x}= h^5+{\rm e}^{h^4}, \ {\rm and } \quad g^5_1=\frac{-w_{yy}}{w_y} = - h^6 -{\rm e}^{h^4}.
\EqTag{g1h1}
\end{equation}
Equations  \EqRef{Fp1}  and \EqRef{g1h1} along with $h^1=g^1_1=x$ and $h^2=g^2_1=y$, provide the relations in \EqRef{FRDI} so that equation \EqRef{pinc} gives  $\bfp_1:N \to N_1$ as
\begin{equation}
\bfp_1=[\,x=x,\,y=y,\, u_1=V-W,\, u_{1x}= V_x+\rme^V,\, u_{1y}=-W_y-\rme^W\,].
\EqTag{p1map}
\end{equation}
This result can also be obtained from equation \EqRef{csa} by computing
$\bfp_1= \bfq_{G_1} \circ \sigma$ using equations \EqRef{QG1} and \EqRef{sigmaex1}.

To compute the reduced system $\CalI/\Gamma_{G_1}$ we again proceed to find generators for $\CalI$ as in equation \EqRef{sbg}.  Since 
\begin{equation}
I'=  \spn \{ \ \theta^v, \ \theta^w\ \}
\EqTag{Ip}
\end{equation}
$ \Gamma_{G_1}$ is not the transverse to $I'$ and $\CalI_1=\CalI/\Gamma_{G_1}$ is not a Pfaffian system \cite{anderson-fels:2005a}. 

To determine algebraic generators for $\CalI_1$  (see equation \EqRef{sbg})
we therefore need to compute the $\Gamma_{G_1}$ semi-basic 1-forms and 2-forms in $\CalI$. The generators of $\CalI$ in the form \EqRef{sbg} are
\begin{equation}
\begin{aligned}
\CalI =\bigg< \ &  \theta_w,\theta_v,\ \frac{1}{w_y}\theta_{w_y}+\frac{1}{v_x}\theta_{v_x}, \  \theta_{\Gamma_{G_1},\semibasic}=  \frac{1}{v_x}\theta_{v_x}-\frac{1}{w_y} \theta_{w_y}, \\ & \ \tau^1_{\Gamma_{G_1},\semibasic}=
\frac{1}{v_x} d \theta_{v_x}-\frac{v_{xx}}{v_x^2} d \theta_v  , \
\tau^2_{\Gamma_{G_1},\semibasic} =  \frac{1}{w_y} d \theta_{w_y}-\frac{w_{yy}}{w_y^2} d \theta_w  \ \bigg>_{\text {alg}}.
\end{aligned}
\EqTag{sb1}
\end{equation}
Written out in full, the semi-basic forms in \EqRef{sb1} are
\begin{equation}
\begin{aligned}
\theta_{\Gamma_{G_1},\semibasic}& =  \frac{1}{v_x}dv_x-\frac{1}{w_y} dw_y- \frac{v_{xx}}{v_x} dx + \frac{w_{yy}}{w_y} dy\ , \\
\tau^1_{\Gamma_{G_1},\semibasic}&=\left(- \frac{1}{v_x} d v_{xx} + \frac{v_{xx}}{v_x^2} dv_x \right) \wedge dx\ ,  \ 
\tau^2_{\Gamma_{G_1},\semibasic}  =\left(-  \frac{1}{w_y} d {w_{yy}}+\frac{w_{yy}}{w_y^2} dw_y \right)\wedge dy\ .
\end{aligned}
\EqTag{sb21}
\end{equation}
The reduced system $\CalI_1=\CalI/\Gamma_{G_1}$ is then easily computed using the fact that the $\Gamma_{G_1}$ semi-basic forms in \EqRef{sb21} are in fact $\Gamma_{G_1}$ basic. In terms of the $\Gamma_{G_1}$ invariants used in equation \EqRef{QG1} we find that the forms in  \EqRef{sb21} are given by
\begin{equation}
\theta_{\Gamma_{G_1},\semibasic}= \bfq_{G_1}^*(du_1 -u_{1x} dx -u_{1y} dy), \ 
\tau^1_{\Gamma_{G_1},\semibasic}= \bfq_{G_1}^*(-du_{1x} \wedge dx), \ \tau^2_{\Gamma_{G_1},\semibasic}= \bfq_{G_1}^*(du_{1y}\wedge dy).
\EqTag{sb21b}
\end{equation}
This proves that
\begin{equation}
\CalI/\Gamma_{G_1} = \langle \ du_1 -u_{1x} dx -u_{1y} dy, \ du_{1x} \wedge dx,\  du_{1y} \wedge dy\  \rangle_{\text {alg}}.
\EqTag{Imod1}
\end{equation}
Thus $\CalI/\Gamma_{G_1}$ is the Monge-Amp\'ere representation of the wave equation on the 5-dimensional manifold $N_1$. 

The syzygy between the differential invariants of $\Gamma_{G_1}$ on $M$ is
$$
 D_x D_y( g^3_1) = D_x D_y ( \log {v_x}- \log{w_y}) = 0.
$$
This observation, together with the equation $u_1= g^3_1=\log v_x - \log v_y $ (from \EqRef{QG1}), provides an alternative way to identify the quotient $\CalI/\Gamma_{G_1}$ as the wave equation for $u_1$.

Finally, we check directly that $\CalB$ is an integrable extension of $\CalI_1=\CalI/\Gamma_{G_1}$. Using equations \EqRef{Imod1} and \EqRef{p1map} we have
\begin{equation}
\begin{aligned}
\bfp_1^*( du_1 -u_{1x} dx -u_{1y} dy) & = dV-V_x dx - dW  -W_y dy = \beta_1 - \beta_2,  \\ 
\bfp_1^* \ ( du_{1x} \wedge dx )& =d( V_x+ \rme^V) \wedge dx= -d \beta_1+\rme^V \beta_1 \wedge dx+\rme^W\beta_2\wedge dy, \\
\bfp_1^* (-du_{1y} \wedge dy)
&= d(W_y+\rme^W) \wedge dy = -d \beta_2 +\rme^V \beta_1\wedge dx + \rme^W \beta_2\wedge dy.
\end{aligned}
\EqTag{SEpb}
\end{equation}
From these equations and the fact that $\CalB$ is a Pfaffian system, we have 
$\CalB= \langle \, \beta_1 ,\, \beta_2,\,  d\beta_1,\, d\beta_2 \, \rangle_{\text {alg}}=\langle \, \beta_2 ,\,  \bfp^* \CalI_ 1 \, \rangle_{\text {alg}}$. Consequently equation \EqRef{CheckIE} is satisfied and $\CalB$ is an integrable extension of $\CalI_1$.   This completes the construction and verification of the entire left-hand side of diagram \EqRef{Intro21} for this example.

To finish this example we construct the right-hand side of diagram \EqRef{Intro21}  by adding to $\Gamma_H$   the vector field
\begin{equation}
	Z_2 =    
	w^2\partial_w+ D_y(w^2) \partial_{w_y} + D_y^2(w^2)\partial_{w_{yy}}-v^2\partial_v -D_x(v^2) \partial_{v_x} - D_x^2(v^2)\partial_{v_{xx}}.
	\EqTag{defZ2}
\end{equation}
Let $\Gamma_{G_2}=\spn \{ X_1, X_2, Z_2\}$. The action of $\Gamma_{G_2}$ 
	is the diagonal action  on   $J^2(\real, \real) \times J^2(\real, \real)$  of the standard projective action of ${\mathfrak s\mathfrak l}(2,\real)$ 	on each of the dependent variables.

We first compute the quotient of $M$ by the action of $\Gamma_{G_2}$. The lowest 
order invariants of $\Gamma_{G_2}$ on $M$ can be computed by finding 
the $Z_2$ invariants which are functions of the $\Gamma_H$ invariants in \EqRef{Hinv}
(see equation \EqRef{FRDI}). We find
\begin{equation}
g^1_2=x,\quad g^2_2= y, \quad g^3_2= \log \frac{2v_xw_y}{(v+w)^2}
\EqTag{ptou2}
\end{equation}
are $\Gamma_{G_2}$ invariant. The higher order $\Gamma_{G_2}$ invariants are given by the $D_x$ and $D_y$ derivatives of $g^3_2$. 
Using the invariants from \EqRef{ptou2} in equation \EqRef{bfqs}, we find that
 the quotient map  $\bfq_{\Gamma_2}:M \to N_2=M/\Gamma_{G_2}$ is given, in coordinates $(x,y,u_2,u_{2x},u_{2y})$ on $N_2$, by 
 \begin{equation}
\begin{aligned}
\bfq_{\Gamma_{G_2}}= [ & x=x,\, y=y,\,  u_2=g^3_2=\log \frac{2v_xw_y}{(v+w)^2} ,\\
& u_{2x}=D_x (g^3_2)= \frac{v_{xx}}{v_x}-\frac{2v_x}{v+w}, \  u_{2y}=D_y(g^3_2)= \frac{w_{yy}}{w_y}-\frac{2w_y}{v+w} ].
\end{aligned}
\EqTag{QG2}
\end{equation}
The map $\bfp_2: N \to N_2$ is found, using equations \EqRef{QG2}, \EqRef{sigmaex1}, and equation \EqRef{csa}, to be
\begin{equation}
\bfp_2= \bfq_{\Gamma_{G_2}}\circ \sigma  = [\,  x=x,\, y=y,\, u_2=V+W+\log 2 , \,
 u_{2x} = V_x- e^V ,\, u_{2y}= W_y-  e^W\, ].
\EqTag{P2M}
\end{equation}

In order to compute the reduced system $\CalI_2=\CalI/\Gamma_{G_2}$ we note that $\Gamma_{G_2}$ 
is also not transverse to $\CalI'$ and so, according to \EqRef{sbg}, we need to compute the $\Gamma_{G_2}$ semi-basic 1-forms and 2-forms in $\CalI$. These semi-basic forms are
\begin{equation}
\begin{aligned}
\CalI^1_{\Gamma_{G_2},\semibasic} & =\spn\{ \,  \theta_{\Gamma_{G_2},\semibasic}=  \frac{1}{v_x}\theta_{v_x}+\frac{1}{w_y} \theta_{w_y}+\frac{2}{v+w}\left( \theta_v + \theta_w\right)\, \}
 \qquad {\rm and} \\ 
\CalI^2_{\Gamma_{G_2},\semibasic} & = \spn\{\, \tau^1_{\Gamma_{G_2},\semibasic}=\left( \frac{1}{v_x}dv_{xx}+\frac{v_x}{w_y(v+w)} \theta_{w_y}-(\frac{v_{xx}}{v_x^2}+\frac{1}{v+w} )\theta_{v_x}\right) \wedge dx,\\
&\qquad \qquad \tau^2_{\Gamma_{G_2},\semibasic} =\left( \frac{1}{w_y}dw_{yy}+\frac{w_y}{v_x(v+w)} \theta_{v_x}-(\frac{w_{yy}}{w_y^2}+\frac{1}{v+w}) \theta_{w_y}\right) \wedge dy     \, \}.
\end{aligned}
\EqTag{sb2}
\end{equation}
  By using the quotient map \EqRef{QG2} we then find
\begin{equation}
\begin{aligned}
\bfq_{\Gamma_{G,2}}^* ( du_2-u_{2x} dx -u_{2y} dy) & = \theta_{\Gamma_{G_2},\semibasic},\\
\bfq_{\Gamma_{G,2}}^*  \left( ( du_{2x}- \rme^{u_2}dy) \wedge dx\right)  &= \tau^1_{\Gamma_{G_2},\semibasic} -\frac{v_x}{v+w} \theta_{\Gamma_{G_2},\semibasic} \wedge dx ,\\
\bfq_{\Gamma_{G,2}}^*  \left( (du_{2y} -\rme^{u_2} dx) \wedge dy \right) &=  \tau^2_{\Gamma_{G_2},\semibasic} -\frac{w_y}{v+w} \theta_{\Gamma_{G_2},\semibasic} \wedge dy.
\end{aligned}
\EqTag{PL}
\end{equation}
From equation \EqRef{PL} we conclude, using equation \EqRef{Imodp},  that 
\begin{equation}
\CalI_2=\CalI/\Gamma_{G_2} = \langle \, du_2-u_{2x} dx -u_{2y} dy,\, ( du_{2x}- \rme^{u_2}dy) \wedge dx,\, (du_{2y} -\rme^{u_2} dx) \wedge dy \, \rangle_{\text {alg}}.
\EqTag{I2}
\end{equation}
Thus $\CalI/\Gamma_{G_2}$  is the Monge-Amp\`ere system on a 5-manifold for  the Liouville equation $u_{2xy}= \exp(u_2)$.

One can also identify the reduced system $\CalI_2$ by noting the syzygy on the differential invariants of $\Gamma_{G_2}$ on $J^2(\real,\real)\times J^2(\real,\real)$ given by
$$
D_x D_y (g^3_2)= \frac{2v_xw_y}{(v+w)^2}= \rme^{g^3_2}.
$$
With $u_2=g^3_2$ (see equation \EqRef{QG2}) the syzygy becomes the Liouville equation $u_{2xy}=\exp(u_2)$.

Finally, it is easy to check that $\CalB$ is an integral extension of $\CalI_2$. By 
using equation \EqRef{P2M}
and the forms in equations \EqRef{ExA3} and \EqRef{I2}, we find
\begin{equation}
\begin{aligned}
\bfp_2^*(du_2-u_{2x} dx -u_{2y} dy)&=\beta_1 + \beta_2,\\
\bfp_2^*\left( ( du_{2x}- \rme^{u_2}dy) \wedge dx \right)& = -d\beta_1 -\rme^V \beta_1\wedge dx+\rme^W \beta_2\wedge dy, \\
\bfp_2^*\left( (du_{2y} -\rme^{u_2} dx) \wedge dy \right) & =- d \beta_2 + \rme^V \beta_1\wedge dx-\rme^W \beta_2\wedge dy,
\end{aligned}
\end{equation}
and therefore $\CalB = \langle \, \beta_1 ,\, \beta_2,\,  d\beta_1,\, d\beta_2 \, \rangle_{\text {alg}} = \langle\,  \beta_2 , \, \bfp_2^* \CalI_2\, \rangle_{\text{alg}}$. This shows that condition \EqRef{CheckIE} is satisfied and $\CalB$ is an integrable extension of $\CalI_2$. This completes the construction and verification of the right-hand side of diagram \EqRef{Intro21} and  \EqRef{Intro222}.

In summary, we have used the Lie algebra of vector fields $\Gamma_H$, $\Gamma_{G_1}$ and 
$\Gamma_{G_2}$ to explicitly produce the following commutative diagram of differential systems
\begin{equation}
\EqTag{ExA51}
\begin{gathered}
\beginDC{\commdiag}[3]
\Obj(0, 30)[Jet]{$\boxed{J^2(\real,\real) \times J^2(\real,\real)}$}
\Obj(0,12)[B]{$\boxed{\begin{gathered} V_y = - \rme^W,\\  W_x = -\rme^V \end{gathered}\quad (\CalB) }$}
\Obj(-50, 0)[Eq1]{$\boxed{ u_{1xy}= 0 \quad (\CalI_1)}$\ }
\Obj(50, 0)[Eq2]{$\boxed{u_{2xy} = \rme^{u_2} \quad (\CalI_2)}$}
\mor{Jet}{B}{$\bfq_{\Gamma_H}$}[\atright, \solidarrow]
\mor{B}{Eq1}{$\bfp_1$}[\atright, \solidarrow]
\mor{B}{Eq2}{$\bfp_2$}[\atleft, \solidarrow]
\mor(-10,26){Eq1}{$\bfq_{\Gamma_{G_1}}$}[\atright, \solidarrow]
\mor(10,26){Eq2}{$\bfq_{\Gamma_{G_2}}$}[\atleft, \solidarrow]
\enddc
\end{gathered}
\end{equation} 
The maps in this diagram are given in equations \EqRef{bfpH}, \EqRef{QG1}, \EqRef{p1map}, \EqRef{QG2}, and \EqRef{P2M}.  Theorem A insures that all the maps $\bfq_{\Gamma_H}$, $\bfq_{\Gamma_{G_1}}$, $\bfq_{\Gamma_{G_2}}$, $\bfp_1$ and $\bfp_2$ in diagram \EqRef{ExA51} define integrable extensions. In diagram \EqRef{ExA51}, $\CalI_1$ and $\CalI_2$ are
the standard Monge-Amp\`ere system on 5 manifolds corresponding to the given differential equation, and $\CalB$ is 
the standard rank 2 Pfaffian system on a 6 manifold representing the first order pair of PDE. 
	
In equation \EqRef{QDES}  we demonstrated how the differential 
equations for $V$ and $W$ in \EqRef{PDEH1} representing the B\"acklund transformation can be expressed as  syzygies amongst the
differential invariants of $\Gamma_H$ on $M$.  Since $\Gamma_H \subset \Gamma_{G_i}$, the
$\Gamma_{G_1}$ invariant $u_1=g^3_1$ in equation \EqRef{Fp1} and the $\Gamma_{G_2}$ invariant $u_2=g^3_2$ in equation \EqRef{ptou2} are also $\Gamma_H$ invariants.
By computing the syzygies amongst the $\Gamma_H$ differential invariants on $M$ generated by total differentiation of $g^3_1$
and $g^3_2$ we find, from equations \EqRef{QG1} and \EqRef{QG2}, that
\begin{equation}
	u_{1x} - u_{2x} = \sqrt{2} \exp \dfrac{u_1 + u_2}{2}, \qquad u_{1y} +  u_{2y} =  -\sqrt{2}  \exp\dfrac{-u_1 + u_2}{2} .
\EqTag{SBT}
\end{equation} 
This is the usual form of the B\"acklund transformation relating the wave equation to the Liouville equation.  Equation \EqRef{SBT} can also be derived by re-writing the differential equations (or the differential system $\CalB)$  in \EqRef{PDEH1} using the change of variables obtained by solving equations \EqRef{p1map} and \EqRef{P2M}  for $V$ and $W$ in terms of $u_1$ and $u_2$. 
\end{Example}


\begin{Example}  
\StTag{ExA2}
We can construct  other B\"acklund transformations using  Theorem  A  by adding  the vector fields 
\begin{equation}
    Z_3 = y\partial_{w}+ \partial_{w_y} \qquad {\rm or } \qquad   Z_4  = y\partial_w +\partial_{w_y}- x\partial_v-\partial_{v_x}.
\EqTag{defZ3Z4}
\end{equation}
to the Lie algebra $\Gamma_H$ in Example \StRef{ExA1}. 	Let  $\Gamma_{G_i} =  \Gamma_H+\spn \{\,  Z_i \,\}$, $i=3,4$. The Lie algebras $\Gamma_{G_i}$ are all symmetries of the contact system $\CalI=\CalK_1+\CalK_2$ on  $J^2(\real, \real) \times J^2(\real, \real)$.	

The quotients $N_i=M/\Gamma_{G_i}$ are again 5 dimensional. The reductions $\CalI_i=\CalI/\Gamma_{G_i}$ and the projection maps $\bfq_{\Gamma_{G_i}}:M \to N_i$ are computed as in Example \StRef{ExA1} but now using the invariants
\begin{equation}
u_3=g^3_3=\frac{yw_y-v-w}{v_x} +x, \quad u_4= g^3_4= \frac{x v_x+y w_y-v-w}{v_x+w_y}.
\EqTag{u3u4}
\end{equation}
The resulting orbit projection maps $\bfp_i:N \to N_i$ are given below, see equation \EqRef{projeq}.

The systems $\CalI_i$ are all generated by a single 1-form and a pair of decomposible 2-forms and are therefore type  $s=1$  hyperbolic systems \cite{bryant-griffiths-hsu:1995a}.  Equation \EqRef{ExA51} together with \EqRef{u3u4} produce 
the following 4-fold collection of integrable extensions $\bfp_i \: \CalB \to \CalI_i$,
\begin{equation}
\EqTag{ExA5}
\begin{gathered}
\beginDC{\commdiag}[3]
\Obj(-20, 45)[Jet]{$\boxed{J^2(\real,\real) \times J^2(\real,\real)}$}
\Obj(-20, 30)[B]{$\boxed{ V_y = - e^W,\  W_x = -e^V\quad (\CalB)}$}
\Obj(-70, 0)[Eq1]{$\boxed{ \begin{gathered} (\CalI_1)\\ u_{1xy}= 0\end{gathered}}$}
\Obj(-45, 0)[Eq2]{$\boxed{\begin{gathered} (\CalI_2) \\ u_{2xy} = e^{u_4} \end{gathered} }$}
\Obj(-10, 0)[Eq3]{$\boxed{\begin{gathered} (\CalI_3) \\ u_{3xy}= \frac{1}{(u_3-x)} \, u_{3x} u_{3y} \end{gathered}}$}
\Obj(40, 0)[Eq4]{$\boxed{\begin{gathered} (\CalI_4) \\ u_{4xy}= \frac{2u_4 -x-y}{(u_4-x)(u_4-y)} \, u_{4x} u_{4y} \end{gathered}}$}
\mor{Jet}{B}{$\bfq_{\Gamma_H}$}[\atright, \solidarrow]
\mor(-38,26)(-65,7){$\bfp_1$}[\atright, \solidarrow]
\mor(-28,26)(-45,7){$\bfp_2$}[\atright, \solidarrow]
\mor(-18,26)(-15,7){$\bfp_3$}[\atleft, \solidarrow]
\mor(-3,26)(20,8){$\bfp_4$}[\atleft, \solidarrow]
\enddc
\end{gathered}
\end{equation} 
The orbit projection maps $\bfp_i$ are determined from
\begin{equation}
	u_1 = V - W,
\quad
	u_2 = V + W + \log 2 , 
\quad
	u_3 =  y e^{W-V}-e^{-V}+x, 	 
\quad
	u_4 =  \frac{ye^W + xe^V -1}{e^W +e^V},
\EqTag{projeq}
\end{equation}
and their  total derivatives.   All the differential systems $\CalI_i$, $i=1,\ldots, 4$ are the standard Monge-Amp\`ere systems on a 5-manifold for the given second order PDE in the plane. As with $u_1$ and $u_2$, the differential equations in diagram \EqRef{ExA5} for $u_3$ and $u_4$ can be determined as the syzygies for the differential invariants on $M$ in \EqRef{u3u4}.  At this point we would like to emphasize the fact 
that in this example.
{\it each of the projections  $\bfp_i$ defines $\CalB$ as an integrable extension of $\CalI_i$ so that any two of the equations in \EqRef{ExA5} are B\"acklund equivalent via the common Pfaffian system $\CalB$.}

If  we express the variables $V$ and $W$ (and their derivatives) in terms of the variables $u_3$ and $u_4$ from equation  \EqRef{projeq} we find the B\"acklund transformation relating the third and fourth equations in \EqRef{ExA5} can be written as
\begin{equation}
	u_4(u_4 -x)u_{3x} -u_3(u_3-x)u_{4x} = 0, 
	\quad 
	u_4(u_4 -y)u_{3y} +  y u_3  u_{4y} = 0. 
\end{equation}


\end{Example}

\begin{Example} 
\StTag{ExB}  In this example we use Theorem A to construct B\"acklund transformations between some hyperbolic PDE in the plane, which are {\it not} of Monge-Amp\`ere type, and the wave equation.  This example is based upon results found in  the PhD thesis of  Francesco Strazzullo \cite{strazzullo:2009a}.

We start with the canonical Pfaffian systems $\CalK_1$ and $\CalK_2$ for the Monge equations
\begin{equation}
	\frac{d u}{d s} = F(\frac{d^2 v}{d \,s^2} ) \quad\text{and}\quad  \frac{d y}{d t}  = G(\frac{d^2 z}{d\, t^2} ),  \quad  \text{where} \ F''  \neq 0\ \text{and}\ G'' \neq 0.
\EqTag{MA2}
\end{equation}
On the 5-manifolds  $M_1=\real^5[s, u, v, v_s,v _{ss}]$ and $M_2=\real^5[ t, y, z, z_t, z_{tt}]$ these Pfaffian systems are
\begin{align*}
	\CalK_1 &= \langle dv - v_s\,ds,\, dv_s - v_{ss}\, ds, \, du -F(v_{ss})\, ds\,  \rangle_{\text{diff}}
	\quad\text{and}\quad
\\
	\CalK_2 &= \langle dz -z_t\, dt,\,  dz_t -z_{tt}\,dt, \,  dy- G(z_{tt})\,dt  \, \rangle_{\text{diff}}.
\end{align*}
The conditions $F'' \neq 0 $ and $G'' \neq 0$ imply that the derived flags for $\CalK_1$ and $\CalK_2$  have ranks $[3,2,0]$.

	We shall calculate reductions of $\CalI = \CalK_1 + \CalK_2$ on $M=M_1\times M_2$ (see footnote 1 on page 7) using the symmetry algebras
	$\Gamma_{G_1} =\spn \{\,X_1, Z_1,  Z_2\}$,  
	$\Gamma_{G_2} = \spn \{\,Z_1, Z_2,  Z_3\}$ and  $\Gamma_H =  \Gamma_{G_1} \cap \Gamma_{G_2} =\spn \{\,Z_1, \, Z_2 \, \}$, where
\begin{equation}
\begin{gathered}
	X_1 = \partial_v, \quad  X_2 = s\partial_v + \partial_{v_s},  \quad X_3 = \partial_u, \quad
        Y_1 = \partial_z, \quad Y_2 = t\partial_z + \partial_{z_t}, \quad Y_3 = \partial_y ,
\\
	Z_1  = X_1  - Y_1, \quad Z_2 = X_2 + Y_2,  \quad Z_3 =   X_3  - Y_3.
	\EqTag{GammaExB}
\end{gathered}
\end{equation}

	Let $N = \real^8[x_1, \ldots,  x_6, x_7, x_8]$, $N_1 = \real^7[x_1, \dots, x_6,x_7]$ and   $N_2 = \real^7[x_1,\ldots, x_5,x_8,x_9]$.
	Then the projection maps   $\bfq_H : M \to N=M/\Gamma_H$,     $\bfq_{G_1} : M \to N_1=M/\Gamma_{G_1}$  and        
	$\bfq_{G_2} : M \to N_2=M/\Gamma_{G_2}$   for the reduction by these 3 actions are (see equation \EqRef{bfqs})
\begin{equation}
\begin{aligned}
\bfq_{\Gamma_H} &=        [ x_1 = t,\  x_2 = s,\  x_3 = z_{tt},\  x_4 = v_{ss},\  x_5 = z_t -  v_s,\  x_6 = u,\  x_7 = y, \ x_8 = z + v -(s+t)v_s] ,
\\
\bfq_{\Gamma_{G_1}} &= [ x_1 = t,\  x_2 = s,\  x_3 = z_{tt},\  x_4 = v_{ss},\ x_5 = z_t -  v_s,\  x_6 = u,\  x_7 = y],
\\
\bfq_{\Gamma_{G_2}} &= [x_1 = t,\  x_2 = s,\  x_3 =  z_{tt},\  x_4 = v_{ss},\ x_5 = z_t - v_s,\ x_8 = z + v -(s+t)v_s, \  x_9 = u + y].
\end{aligned}
\EqTag{bfqExB}
\end{equation} 
The actions $\Gamma_H$ and $\Gamma_{G_i}$ are transverse to $I'$, so that the
differential systems in the commutative diagram \EqRef{Intro21} are all Pfaffian systems \cite{anderson-fels:2005a}. The reductions are easily computed to be	

\begin{equation}
\beginDC{\commdiag}[3]
\Obj(0, 50)[I]{$ \boxed{\begin{gathered} du - F(v_{ss})\,ds, \ dv- v_s\,ds, \ dv_s - v_{ss}\,ds , \\[-2\jot] dy -G(z_{tt})\, dt, \ dz - z_t\,dt,\  dz_t -z_{tt}\, dt  \end{gathered}  \quad (\CalI) } $ }
\Obj(0, 26)[H]{$\boxed{ \begin{gathered}  \beta^1 = dx_5 - x_3\,dx_1 + x_4\,dx_2, \\[-2\jot]  \beta^2 = dx_6- F\,dx_2 ,\    \beta^3 =  dx_7 - G\, dx_1 , \\[-2\jot] \beta^4 =  dx_8 - x_5\,dx_1 +x_4(x_1+x_2)\,dx_2     \end{gathered}  \quad (\CalB) } $}
\Obj(-37, 0)[I1]{$\boxed{ \begin{gathered} \alpha^1 = dx_5  -x_3\,dx_1+x_4\,dx_2,\  \\[-2\jot] \alpha^2 = dx_6 -F\,dx_2,\  \alpha^3 = dx_7 -G\,dx_1  \end{gathered}   \  (\CalI_1)  }  $}
\Obj(37, 2)[I2]{$ \boxed{ \begin{gathered} \gamma^1 = dx_5 -x_3dx_1+x_4dx_2,  \\[-2\jot]  \gamma^2 = dx_9-G\,dx_1  -F\,dx_2 , \\[-2\jot]   \gamma^3 = dx_8   -x_5dx_1+ x_4(x_1+x_2)dx_2  \end{gathered}   \  (\CalI_2)   }$}
\mor{I}{H}{$\bfq_H$}[\atleft, \solidarrow]
\mor(-1, 17)(-45, 6){$\bfp_1$}[\atleft, \solidarrow]
\mor(8, 17)(42, 11){$\bfp_2$}[\atright, \solidarrow]
\mor(-32, 46)(-52, 4){$\bfq_{\Gamma_{G_1}}$}[\atright, \solidarrow]
\mor(32, 46)(52, 10){$\bfq_{\Gamma_{G_2}}$}[\atleft, \solidarrow]
\enddc
\EqTag{ExB3}
\end{equation}
where $F= F(x_4)$, $G = G(x_3)$, and $x_9=x_6+x_7$ in $\bfp_2$.

For the Pfaffian system $\CalB =  \CalI/\Gamma_{H}$ in \EqRef{ExB3} the structure equations are
\begin{equation}
\begin{aligned}
	d\,\beta^1 
	&= \hpi^1 \wedge \hpi^2 - \cpi^1 \wedge \cpi^2,
	&\qquad  d\, \beta^2
	&= F'\, \cpi^1 \wedge \cpi^2  &&\!\!\! \mod \{\, \beta^1, \beta^2 \, \},
\\	
	d\, \beta^3 
	&=  G' \,\hpi^1 \wedge \hpi^2, 
	& \qquad d\,\beta^4  
	&= -(x_1+ x_2) \cpi^1 \wedge \cpi^2  &&\!\!\! \mod \{\,  \beta^1, \beta^2 \, \},
\end{aligned}
\EqTag{SEB2}
\end{equation}
where  $\hpi^1=dx_1$, $\hpi^2=dx_3$,   $\cpi^1 = dx_2$, $\cpi^2 = dx_4$.  Thus $\CalB$ is a class $s=4$ hyperbolic Pfaffian system \cite{bryant-griffiths-hsu:1995a}. 
	
It is straightforward to check that the maps $\bfp_a$ are integrable extensions by using condition \EqRef{IEP} and noting that the  derived system for $B$ is
\begin{equation*}
	B' =  \spn \{\, \delta^1=\beta^4 + \frac{x_1 +x_2}{F'}\,\beta^2,\  \delta^2=\beta^3 - \frac{1}{G'} \beta^1 +\frac{1}{F'} \beta^2 \, \}.
\end{equation*}
	The spans of the 1-forms  $\delta^1$ and $\delta^2$ in $B'$ define admissible sub-bundles for $\CalB$  as  integrable extensions of $\CalI/\Gamma_{G_1}$  and 	$\CalI/\Gamma_{G_2}$,   respectively. 

The change of variables
\begin{equation}
\begin{gathered}
	X' =- x_6  + x_2F,   \quad    Y' = x_7 - x_1G,\quad  U'=   x_5  - x_1x_3 + x_2x_4,\   
\\ 	
	U'_{X'}= \frac{1}{F'} ,\quad U'_{Y'} = \frac{1}{G'}, \quad      U'_{X'X'} = -\frac{F''}{x_2F'^3}, \quad  U'_{Y'Y'} = \frac{G''}{x_1G'^3}
\end{gathered}
\EqTag{ExB8}
\end{equation}
transforms the system $\CalI_1 = \CalI/\Gamma_{G_1}$ in diagram \EqRef{ExB3} into the standard rank 3 Pfaffian system for the wave equation $U'_{X'Y'} = 0$. 

	To calculate the structure equations  for $\CalI_2 = \CalI/\Gamma_{G_2}$, given in diagram \EqRef{ExB3}, we define a new co-frame by 
\begin{equation}
\begin{gathered}
	\theta^1  =  (x_1 + x_2) G'\,\gamma^1  - (x_1+x_2)\,\gamma^2 - (F' + G')\, \gamma^3 , \quad
	\theta^2 =  F'\,\gamma^1 + \gamma^2, \quad \theta^3 =  \gamma^2-G'\,\gamma^1,
\\
	\hpi^1  =  dx_1 -  \mu_2(x_1+ x_2)\, dx_3, \quad \hpi^2 =  dx_3,  \quad
	\cpi^1  = dx_2 -  \mu_1(x_1 +x_2) \,dx_4, \quad \cpi^2  =  dx_4,
\end{gathered}
\EqTag{CofEB}
\end{equation}
	where $\mu_1 = \dfrac{F''}{F' +G'}$ and $\mu_2 = \dfrac{G''}{F' +G'}$. The structure equations for this new coframe are 
\begin{equation}
\begin{aligned}
	d\, \theta^1 & \equiv \theta^2 \wedge \hpi^1 + \theta^3 \wedge \cpi^1,  \quad \mod \theta^1
\\
	d\, \theta^2 & = (F' + G') \hpi^1 \wedge\hpi^2 -  \mu_1(\theta^2 -\theta^3) \wedge \cpi^2,  
\\
	d\, \theta^3 &= (F' + G') \cpi^1 \wedge\cpi^2  + \mu_2(\theta^2 - \theta^3) \wedge \hpi^2 . 
\end{aligned}
\EqTag{SeEB}
\end{equation}
	These  structure equations show that $\CalI_2$  defines a  hyperbolic second order PDE, see Theorem 11.1.1 in \cite{stormark:2000a}. From the equations  for 
	$d\, \theta^2$ and  $d\, \theta^3$    one can determine that the  Monge-Amp\`ere invariants for $\CalI_2$ are
	proportional to  $F''$ and $G''$. Thus   $\CalI_2$ is of  generic type  (type (7, 7) in the terminology of \cite{gardner-kamran:1993a}) 
	when $F'' \neq 0$ and $G'' \neq 0$.

	In the special case where \EqRef{MA2} consists of a pair of Hilbert-Cartan equations
\begin{equation}
	\frac{d u}{d s} = \bigl(\frac{d^2 v}{d \,s^2} \bigr)^2\quad\text{and}\quad  \frac{d y}{d t}  = \bigr (\frac{d^2 z}{d\, t^2} \bigl)^2 
	\EqTag{HCeq}
\end{equation}
the change of variables on $N_2$ defined by
\begin{equation}
\begin{gathered}
	X  = -2 \frac{x_3 +x_4}{x_1+x_2},  \quad Y =x_5 - \frac12 (x_3-x_4)(x_1+x_2),\quad   U_{XY} =   \frac12(x_2- x_1), \quad U_{YY}= \frac2{x_1+x_2},
\\
	U  =  2 \frac{x_3 +x_4}{x_1 +x_2} (x_1x_5-x_8) - x_9   +\frac13(2x_1 -x_2)x_4^2
	-\frac23(x_1 +x_2) x_3x_4 -\frac13(x_1 - 2x_2)x_3^2,
\\
	U_X  =  x_8 - x_1x_5 + \frac16 (x_1 +x_2)((2x_1 -x_2)x_3  -  (x_1-2x_2)x_4) ,\quad 
	U_Y  =  2\frac{x_4x_1-x_3x_2}{x_1+x_2}
\end{gathered}
\EqTag{ExB10}
\end{equation}
transforms $\CalI_2$  to the standard differential system for $3U_{XX}{U^3_{YY}}+1=0$.
Accordingly, we can rewrite the B\"acklund transformation \EqRef{ExB3} as
\begin{equation}
\begin{gathered}
\beginDC{\commdiag}[3]
\Obj (1, 27)[B]{$\boxed{\begin{gathered}  dx_5 - x_3\,dx_1 + x_4\,dx_2, \\[-2\jot]  dx_6- x_4^2\,dx_2 ,\    dx_7 - x_3^2\, dx_1,  \\[-2\jot]    dx_8 - x_5\,dx_1 +x_4(x_1+x_2)\,dx_2     \end{gathered}    \quad (\CalB)   } $}
\Obj(-28, 0)[I1]{$\boxed{U'_{X'Y'} = 0\quad (\CalI_1)}$}
\Obj(35, 0)[I2]{$\boxed{3U_{XX}U^3_{YY} +1 =  0 \quad (\CalI_2)}$}
\mor(-2, 17)(-32,3){$\bfp_1$}[\atright, \solidarrow]
\mor(2,17)(38,3){$\bfp_2$}[\atleft, \solidarrow]
\enddc
\end{gathered}
\end{equation}
Here $\CalI_1$ and $\CalI_2$ are the standard rank 3 Pfaffian systems for the given second order PDE in the plane. 

\end{Example}

\begin{Example}  Examples \StRef{ExA1}-\StRef{ExB} provide B\"acklund transformations for scalar PDE in the plane. In this example we shall use Theorem A to find a B\"acklund transformation for the $A_2$ Toda system
\StTag{ExD}
\begin{equation*}
	u_{xy} =  2 e^u - e^v, \quad v_{xy} =  -e^u +2e^v.
\end{equation*}
By a simple linear change of variables,  the $A_2$ Toda system can be rewritten as
\begin{equation}
	U^1_{xy} = e^{2U^1 -U^2}, \quad U^2_{xy} = e^{-U^1 +2U^2}.
\EqTag{95}
\end{equation}
This latter formulation proves to be more amenable to our analysis.

We begin with the following  choice of non-standard coordinates $[y, w, w_1, w_2, w_3,  z, z_1, z_2, z_3, z_4]$ for $J^{3,4}(\real, \real^2)$ for which the contact system is given by
\begin{equation*}
K_1 =\spn \{ dw - w_1dy,\ dw_1 - w_2 dy,\ dw_2 -w_3dy,\ dz - z_1dw,\  dz_1 - z_2dw,\ dz_2 - z_3 dw, \ dz_3 - z_4 dw \}.
\end{equation*}
For our second copy of $J^{3,4}(\real, \real^2)$  we use coordinates $[x$, $u$, $u_1$, $u_2$, $u_3$,  $v$, $v_1$, $v_2$, $v_3$, $v_4]$, 	with 
\begin{equation*}
		K_2 =\spn \{ du - u_1dx,\ du_1 - u_2 dx,\ du_2 -u_3dx,\ dv - v_1du,\  dv_1 - v_2du,\ dv_2 - v_3 du, \ dv_3 - v_4 du \}.
\end{equation*}
In these coordinates the total derivatives $D_x$ and $D_y$ are
\begin{equation}
\begin{aligned}
D_x&= \partial_x+ u_1 \partial_u + u_2 \partial_{u_1} + u_3 \partial _{u_2} + u_1 v_1 \partial_v + u_1 v_2 \partial_{v_1} + u_1 v_3 \partial_{v_2} +u_1 v_4 \partial_{v_3} \quad {\rm and} \\
D_y&= \partial_y+ w_1 \partial_w + w_2 \partial_{w_1} + w_3 \partial _{w_2} + w_1 z_1 \partial_z + w_1 z_2 \partial_{z_1} + w_1 z_3 \partial_{z_2} +w_1 z_4 \partial_{z_3}.
\end{aligned}
\EqTag{DxDyNC}
\end{equation}
	
For the symmetry algebra of $\CalI=\CalK_1+\CalK_2$ we start with the infinitesimal generators $\Gamma_{G_2}$ for the diagonal action of  $G_2=SL(3,\real)$   on $ J^{3,4}(\real, \real^2) \times J^{3,4}(\real, \real^2) $  given by the prolongation of the vector fields
\begin{gather}
\begin{align*}
	X_1 &= \partial_w+\partial_u , &\  X_2 &= \partial_z+ \partial_v,&\  X_3 &=w\,\partial_w+u\,\partial_u ,\qquad\qquad
\\
	X_4 &= z\,\partial_w + v\,\partial_u,
	&\ 
	X_5 &= w\,\partial_z+u\,\partial_v  ,
	&\ 
	X_6 &=z\,\partial_z+ v\,\partial_v ,\qquad
\end{align*}
\\
	Z_1 = w^2\partial_w + wz\,\partial_z+u^2\partial_u  + uv\,\partial_v  ,
	\quad
	Z_2 = wz\,\partial_w + z^2\,\partial_z+uv\,\partial_u + v^2\partial_v .
\EqTag{SL3c}
\end{gather}
On each factor this is the standard projective action of $\sl(3,\real)$ on $\real^2$. 
In order to compute the prolongation the standard Lie prolongation formula for vector fields must be adapted to the above form of the contact systems.

 We now construct  a B\"acklund transformation for \EqRef{95}  using the Lie algebras of vector fields on $J^{3,4}(\real, \real^2) \times J^{3,4}(\real, \real^2)$ given by
\begin{equation}
\begin{aligned}
	\Gamma_{G_1}  &= \spn \{\, X_1, \,X_2,\, X_3,\, X_4,\, X_5,\, X_6,\,   Z_3 =  \partial_u - \partial_w,\  Z_4 = \partial_v - \partial_z \,\},
\\
	\Gamma_{G_2}  & = \spn \{\,X_1,\, X_2,\, X_3,\, X_4,\, X_5,\, X_6,\,  Z_1,\, Z_2 \, \} \ {\rm and}
\\		
	\Gamma_H  &=  \Gamma_{G_1} \cap  \Gamma_{G_2}  = \spn \{\,  X_1,\, X_2,\, X_3,\, X_4,\, X_5,\, X_6 \, \} .
\end{aligned}
\EqTag{GammaSL3}
\end{equation}
We will work 
on the open set  $M \subset J^{3,4}(\real, \real^2) \times J^{3,4}(\real, \real^2)$ where
\begin{equation}
(w-u)z_1>z-v,\, v_1> z_1,\, u_1>0,\, w_1>0, \, v_2 <0,\,  z_2 > 0.
\EqTag{Todadomain}
\end{equation}

It can be shown that the reduction $\CalI_2=(\CalK_1+\CalK_2)/\Gamma_{G_2}$ is a rank 6 Pfaffian system on the 12-manifold $N_2=M/\Gamma_{G_2}$. The system $\CalI_2$ is the standard Pfaffian system obtained from the Toda equations in \EqRef{95}, viewed as a submanifold of $J^2(\real^2,\real^2)$. Indeed, the lowest order joint differential invariants for the $\Gamma_{G_2}$ action are
\begin{equation}
	g^1_2 = \frac13\log \frac{-u_1^3v_2w_1^3  z_2^2}{((w-u)z_1+v-z)^3} \quad \text{and}\quad   
	g_2^2 = \frac13\log\frac{  u_1^3 v_2^2w_1^3 z_2}{ ((w-u)v_1+v-z)^3}.
\EqTag{32}
\end{equation}
The syzygies among the $x$ and $y$ total derivatives (computed using equation \EqRef{DxDyNC}) of these invariants  give precisely  the $A_2$ Toda system \EqRef{95}, after substituting $U^1=g_2^1$ and $U^2=g_2^2$.

The rank 6 Pfaffian system  $\CalI/\Gamma_{G_1}$ is the standard differential system on the 12 manifold $M/\Gamma_{G_1}$ for the  decoupled Liouville-wave system
\begin{equation}
	V^1_{xy}= e^{\textstyle V^1},  \quad V^2_{xy} = 0.
\EqTag{52}
\end{equation}
Equations \EqRef{52} are the syzygies among the $x$ and $y$ total derivatives (equation \EqRef{DxDyNC}) of the invariants
\begin{equation}
	g_1^1 = \log  \frac{-2u_1w_1v_2z_2}{(v_1 - z_1)^2} \quad \text{and} \quad g_1^2 =  \log \frac{-w_1^3z_2}{u_1^3v_2},
\EqTag{VTs}
\end{equation}
after letting $V^1=g^1_1$ and $V^2=g^2_1$.
		
	Finally the symmetry  reduction of $\CalK_1+\CalK_2 $ by the 6-dimensional 
	Lie algebra $\Gamma _H$ is a rank 8 Pfaffian system  $\CalB$ on the 14-dimensional  manifold $M/\Gamma_H$. The Pfaffian system $\CalB$ is found to be a partial prolongation of the standard Pfaffian system for the equations
\begin{equation}
	W^1_{xy}  = -W^1_y \, (W^2_x+ e^{W_1})    \quad\text{and}\quad W^2_{xy}  = -W^2_x\, (e^{W^2}+W^1_y).
\EqTag{53}
\end{equation}
Again this can be seen by using the  lowest order joint differential invariants for $\Gamma_H$ on M, namely
\begin{equation}
	h^1 = \log \frac{u_1(v_1 -z_1)}{(w-u)z_1+v-z} \quad\text{and}\quad  h^2 =  \log  \frac{w_1(v_1-z_1)}{(w -u)v_1 + v -z}.
\EqTag{WTs}
\end{equation}
The syzygies among the  $x$ and $y$ total derivatives of these invariants  produce \EqRef{53}.

	At this point we have constructed the  Pfaffian systems for  all of the equations in the following commutative diagram
\begin{equation*}
\begin{gathered}
\beginDC{\commdiag}[3]
\Obj(0, 32)[Jet]{$J^{3,4}(\real, \real^2) \times J^{3,4}(\real, \real^2)$}
\Obj(0, 10)[B]{$\boxed{\begin{aligned}W^1_{xy} &= -W^1_y (W^2_x+ e^{W_1})    \\ W^2_{xy}  &= -W^2_x(e^{W^2}+W^1_y) \end{aligned}\quad (\CalB) }$}
\Obj(54,  -7)[A2]{\ $\boxed{\begin{aligned}U^1_{xy} &=  e^{2U^1 -U^2}    \\ U^2_{xy} &=e^{-U^1 +2U^2} \end{aligned}\quad (\CalI_2) }$ }
\Obj(-54, -7)[A1]{$\boxed{\begin{aligned}     V^1_{xy} &= e^{V^1} \\  V^2_{xy} &=0  \end{aligned} \quad (\CalI_1) }$ \ }
\mor{Jet}{B}{{$\bfq_{\Gamma_H}$}}[\atright, \solidarrow]
\mor( -14,30)(-44,3){$\bfq_{\Gamma_{G_1}}$}[\atright, \solidarrow]
\mor(14,30)(43,3){$ \bfq_{\Gamma_{G_2}}$}[\atleft, \solidarrow]
\mor(16,2)(37,-3){$\bfp_2$}[\atright, \solidarrow]
\mor(-16,2)(-43,-4){$\bfp_1$}[\atleft, \solidarrow]
\enddc 
\end{gathered}
\end{equation*}
The quotient maps for the actions $\Gamma_H$, $\Gamma_{G_1}$ and $\Gamma_{G_2}$ are defined using the fomulas  $U^1=g_2^1$, $U^2=g^2_2$, $V^1=g^1_1$, $V^2=g^2_1$ from  \EqRef{32}, \EqRef{VTs}, and \EqRef{WTs}. To give the coordinate form for the maps $\bfp_a$ (as in equation \EqRef{pinc}) we calculate the expressions for $U^1=g_2^1$, $U^2=g_2^2$, $V^1=g^1_1$, $V^2=g^2_1$ in terms of $W^1=h^1$ and $W^2=h^2$ and their derivatives to deduce that
\begin{equation}
\begin{gathered} 
	U^1 = \frac23W^1+ \frac13W^2+ \frac13 \log(-(W^1_y)^2W^2_x), 
	\quad
	U^2 =  \frac13W^1 +\frac23W^2+\frac13 \log(-W^1_y(W^2_x)^2),
\\
	V^1 = \log(2W^1_yW^2_x),   
	\quad
	V^2 =  2W^2-2W^1+\log(\frac{W^1_y}{W^2_x}).
\end{gathered} 
\EqTag{55}
\end{equation}
Note that, on account of \EqRef{Todadomain}, $W^1_y<0$ and $W^2_x <0$. 
	Formulas \EqRef{55}, together with their $x$ and $y$ total derivatives to order 2,  define the projection maps $\bfp_1$ and $\bfp_2$. 
	It is a simple matter to check that $\bfp_1$ and $\bfp_2$ define integrable extensions.	
	
Finally,  the  elimination of the variables $W^1$ and $W^2$ and their derivatives from  \EqRef{55} lead to the following first order system of PDE 
\begin{alignat*}{2}
V^1_x- \frac13V^2_x-2U^2_x &= \sqrt{2}\,e^{(\displaystyle U^2-  {\textstyle \frac16V}^2-U^1+  {\textstyle \frac12}V^1)},
&\quad
V^2_x+3U^1_x &= -3\sqrt{2}\,e^{(\displaystyle -  {\textstyle \frac12}V^1+U^1-   {\textstyle \frac16}V^2)},
\\
V^1_y+ \frac13V^2_y-2U^1_y &= \sqrt{2}\,e^{(\displaystyle U^1-U^2+ {\textstyle \frac12}V^1+  {\textstyle \frac16}V^2)},
&\quad
-V^2_y+3U^2_y &= -3\sqrt{2}\,e^{(\displaystyle U^2 -   {\textstyle \frac12}V^1+   {\textstyle \frac16}V^2)}
\end{alignat*}
 for $U^1$, $U^2$, $V^1$, $V^2$. This gives a B\"acklund transformation for the $A_2$ Toda system \EqRef{95} in the classical sense.
\end{Example}

\begin{Example} \StTag{ExE} In  this example we find a B\"acklund transformation for the over-determined system
\begin{equation}
	u_{xz} = uu_x , \quad u_{yz} = u u_y
	\EqTag{OverDS}
\end{equation}
considered in \cite{anderson-fels-vassiliou:2009a}. We shall obtain the system \EqRef{OverDS} as 
the quotient of the standard contact structure $\CalK_1+\CalK_2$ on the jet space $ J^2(\real,\real)\times J^1(\real^2,\real)$ by the fiber action of $\sl(2,\real)$. With respect to the coordinates $(  z, w,w_z,w_{zz};x,y,v,v_x,v_y)$ the action is $\Gamma_{G_2}=\spn\{X_1,X_2,X_3\}$, where
\begin{equation}
\begin{aligned}
X_1& =\partial_w+\partial_v, \quad X_2 =w\partial_w+w_z\partial_{w_z}+w_{zz}\partial_{w_{zz}}+ v\partial_v+v_x\partial_{v_x}+v_y\partial_{v_y}, \\
X_3&=w^2\partial_w+2ww_z\partial_{w_z}+ 2(w_z^2+ww_{zz})\partial_{w_{zz}}
+v^2 \partial_v+2vv_x\partial_{v_x}+2vv_y\partial_{v_y}.
\end{aligned}
\EqTag{GammaExE}
\end{equation}
The domain $M$ we work on is defined by $v>w, v_x>0,v_y>0,w_z>0, w_{zz} \neq 0$.

From the invariants of $\Gamma_{G_2}$ on $M$, we find that the quotient $\bfq_{G_2}:M \to N_2=M/\Gamma_{G_2}$ (equation \EqRef{bfqs}) is
\begin{equation}
\bfq_{\Gamma_{G_2}}=[\,x=g^1_2=x,\, y=g^2_2=y,\, z=g^3_2= z,\, u=g^4_2=\frac{w_{zz}}{w_z} + 2\frac{w_z}{v-w},\, u_x=D_x(g^4_2), u_{y}=D_y(g^4_2)\, ].
\EqTag{bfGg2}
\end{equation}
The reduced system $\CalI_2=(\CalK_1+\CalK_2)/\Gamma_{G_2}$ is generated by a pair of 2-forms
\begin{equation}
\CalI_2= \langle \ (du - u_x dx -u_y dy)\wedge dz,\, ( du_x-uu_x dz) \wedge dx + (du_y-uu_ydz) \wedge dy\ \rangle_{\rm alg},
\end{equation}
and represents equations \EqRef{OverDS} as an EDS. The fact
that the system $(\CalK_1+\CalK_2)/\Gamma_{G_2}$ leads to equations \EqRef{OverDS} can also be obtained by computing the syzygies among the repeated total derivatives of the differential invariant $g^4_2$ in \EqRef{bfGg2}.

For our next reduction  we let
\begin{equation}
\Gamma_{G_1}= \spn \{\, \partial_w,\, \partial_v,\, X_2\, \}.
\EqTag{G1ExE}
\end{equation}
The map $\bfq_{\Gamma_{G_1}}:M \to N_1=M/ \Gamma_{G_1}$ is given by finding the $\Gamma_{G_1}$ invariants and is (equation \EqRef{bfqs})
\begin{equation}
\bfq_{\Gamma_{G_1}} = [\, x=x,\, y=y,\, z=z,\, P =\log{w_z}-\log {v_x}, \, Q=\log{w_z}- \log {v_y},\, P_z=w_z^{-1} w_{zz}\, ],
\EqTag{ExEbfg1}
\end{equation}
where  $(x,y,z,P,Q,P_z)$ are coordinates on the 6-manifold $N_1$. The reduced EDS $\CalI_1 = (\CalK_1+\CalK_2)/\Gamma_{G_1}$ is
\begin{equation}
\CalI_1=\langle \ dP_z \wedge dz, \ {\rm e}^Q(dP -P_z dz) \wedge dx + {\rm e}^{P} (dQ-P_z   dz) \wedge dy\  \rangle_{\rm alg},
\EqTag{ODRE1}
\end{equation}
and the corresponding differential equations are
\begin{equation}
P_{xz}=P_{yz}=0,\   P_z=Q_z, \ {\rm e}^Q P_y={\rm e}^{P} Q_x.
\EqTag{I1f}
\end{equation}

For the last reduction, let $\Gamma_H=\Gamma_{G_1}\cap\Gamma_{G_2}=\spn \{X_1,X_2\}$. From the lowest order $\Gamma_H$ invariant on $M$, namely,
\begin{equation}
h^4=\log \frac{w_z}{v-w}, 
\EqTag{RST}
\end{equation}
we determine the function $\bfq_{\Gamma_H}:M \to N=M/\Gamma_{H}$ to be (equation \EqRef{bfqs})
\begin{equation}
\bfq_{\Gamma_H}=[\, x=x,\, y=y,\, z=z,\, S=h^4=\log \frac{w_z}{v-w},\, S_x=D_x(h^4),\, S_y=D_y(h^4), \, S_z=D_z(h^4)\, ].
\EqTag{ODqH}
\end{equation}
Note that $S_x<0$ and $S_y<0$ on  $M$. The reduced  system $\CalB= \CalI/\Gamma_H$ is
$$
\CalB=\langle \,  dS -S_xdx-S_y dy -S_z dz,\,  
dS_x\wedge dx + dS_y\wedge dy + dS_z\wedge dz, \, 
(dS_z-{\rm e}^{S}S_x dx-{\rm e}^{S}S_y dy)\wedge dz\,
 \rangle_{{\rm alg}}.
$$
This corresponds to the system of partial differential equations for $S$ as a function of $x,y,z$,
\begin{equation*}
 S_{xz}= {\rm e}^SS_x, \quad S_{yz}={\rm e}^SS_y.
\end{equation*}
The function $\bfp_2:N \to N_2$ is given by (see equations \EqRef{bfGg2} and \EqRef{ODqH})
\begin{equation}
\bfp_2=[\ x=x,\ y=y,\ z=z,\ u= S_z+{\rm e}^S,\ u_x=2{\rm e}^SS_x,\ u_y=2{\rm e}^SS_y\ ],
\EqTag{ODp2}
\end{equation}
while, from  equations \EqRef{ODqH} and \EqRef{ExEbfg1}, we find the map $\bfp_1:N \to N_1$ to be
\begin{equation}
\bfp_1=[\ x=x,\ y=y,\ P= S-\log(- S_x), \ Q= S-\log (-S_y),\ P_z= S_z-{\rm e}^S\ ].
\EqTag{ODSbfp1}
\end{equation}

In summary we have
\begin{equation}
\begin{gathered}
\beginDC{\commdiag}[3]
\Obj(-10, 34)[Jet]{$J^{2}(\real, \real) \times J^{1}(\real^2, \real)$}
\Obj(-10, 15)[B]{$\boxed{\begin{aligned}  S_{xz}&={\rm e}^S S_x   \\   S_{yz} & ={\rm e}^S S_y \end{aligned} \quad (\CalB) }$}
\Obj(30,  -5)[A2]{\ $\boxed{\begin{aligned} u_{xz}&= uu_x,    \\  u_{yz} &=uu_y \end{aligned} \quad (\CalI_2) }$\ }
\Obj(-60, -5)[A1]{$\boxed{\begin{aligned}     P_z &= Q_z,\,  {\rm e}^QP_y = {\rm e}^P Q_x,\\ P_{xz}&=P_{yz}=0 \end{aligned} \quad (\CalI_1) }$ \ }
\mor{Jet}{B}{$\bfq_{\Gamma_H}$}[\atright, \solidarrow]
\mor( -26,31)(-55,3){$ \bfq_{\Gamma_{G_1}}$}[\atright, \solidarrow]
\mor(6,31)(24,3){$\bfq_{ \Gamma_{G_2}}$}[\atleft, \solidarrow]
\mor(-5,8)(17,-4){$\bfp_2$}[\atright, \solidarrow]
\mor(-16,8)(-38,-4){$\bfp_1$}[\atleft, \solidarrow]
\enddc 
\end{gathered}
\EqTag{CDOD}
\end{equation}
\end{Example}
\noindent
The differential equations for $\CalB$  define a B\"acklund transformation between 
the over-determined system of differential equations for $\CalI_2$ and $\CalI_1$.

\begin{Example}
\StTag{ExC}
In this example we construct a B\"acklund transformation between two Goursat equations \cite{goursat:1900a}
\begin{equation}
	u_{xy} = \dfrac{2n\, \sqrt{u_x u_y}}{x+y}, \ n=0,1,\ldots
\EqTag{ExC1}
\end{equation}
with parameter values $n-1$ and $n$. This example is of interest because the dimension of the algebras used in the reduction diagram have dimensions which increase linearly as a 
function of the parameter $n$.

Let $\CalG_n$ be the standard Monge-Amp\`ere differential system for \EqRef{ExC1} on a 5-manifold. To construct a B\"acklund transformation  between
$\CalG_{n-1}$ and $\CalG_n$, we start with  $\CalI =  \CalH^{n+1}_1 + \CalH^{n +1}_2$, where     
	$\CalH^{n+1}_1$ and  $\CalH^{n+1}_2$
	are the standard rank $n+1$ Pfaffian systems on $(n+3)$-manifolds for the $n$-th order Monge equations 
\begin{equation*}
	 \frac{d u}{d s} = \bigl( \frac{d^n v}{d\, s^n}  \bigr)^2 \quad\text{and}  \quad 
	 \frac{d y}{d t} =  \bigl(\frac{d^n z}{d\, t^n}  \bigr)^2.
\end{equation*}  
	The vector fields
\begin{equation}
	 R= \partial_u,  \quad S_i  =s^{i}\partial_v,  \quad Y= \partial_y, \quad  Z_i = t^i\partial_z, \quad i = 0,1, \ldots 2n-1
\end{equation}
	all lift  to symmetries  of $\CalI$, which we again denote by $R,S_i,Y$ and $Z_i$. To apply Theorem A we  let
\begin{gather}
	\Gamma_{G_1} =\spn \{\, R +Y,  S_0, Z_0, S_i + Z_i\, \}_{1 \leq i \leq 2n-2}, \quad  
	\Gamma_{G_2} =\spn \{\, R +Y,   S_i + Z_i\, \}_{0 \leq i \leq 2n-1}  \\ \text{and}
\quad
	\Gamma_H = \Gamma_{G_1} \cap \Gamma_{G_2}  = \spn \{\, R +Y,   S_i + Z_i\, \}_{0 \leq i \leq 2n-2}.
 \end{gather}
	We conjecture that the reduction of $\CalI =  \CalH^{n+1}_1 + \CalH^{n +1}_2$  by $\Gamma_{G_1},\Gamma_{G_2}$ and $\Gamma_H$ leads to the following commutative diagram of  Pfaffian systems
\begin{equation}
\begin{gathered}
\beginDC{\commdiag}[3]
\Obj(0, 23)[H]{$\CalH^{n+1}_1 + \CalH^{n+1}_2$}
\Obj(0, 8)[B]{$\CalB_n$}
\Obj(-35, -10)[I1]{$\boxed{ U_{xy} = \dfrac{2(n-1)\, \sqrt{U_x U_y}}{x+y} \quad (\CalG_{n-1})}$}
\Obj(30,  -10)[I2]{$\boxed{ V_{xy} = \dfrac{2n\, \sqrt{V_xV_y}}{x+y}\quad (\CalG_n)}$}
\mor{H}{B}{$\bfq_{\Gamma_H}$}[\atright, \solidarrow]
\mor(-8,22)(-30,-5){$\bfq_{\Gamma_{G_1}}$}[\atright, \solidarrow]
\mor(8,22)(26,-5){$\bfq_{\Gamma_{G_2}}$}[\atleft, \solidarrow]
\mor{B}(-20,-5){$\bfp_2$}[\atright, \solidarrow]
\mor{B}(20,-5){$\bfp_1$}[\atleft, \solidarrow]
\enddc 
\end{gathered}
\EqTag{CONJ}
\end{equation}
Detailed formulas  for the various projection maps and quotients in \EqRef{CONJ} when $n=1,2,3$ can be found in Section 8 of \cite{anderson-fels:2009a}. For these small values of $n$ the explicit construction of the B\"acklund transformation $\CalB_n$ is not difficult.  We are unable to calculate the $\Gamma_{G_a}$ and $\Gamma_H$ differential invariants in closed form for a general $n$. However, the following theorem can be easily proved through direct computation, independent of \EqRef{CONJ}.

\begin{Theorem} For any $n\in \real$, the  differential equations 
\begin{equation}
	U_{xy} = \dfrac{2(n-1)\, \sqrt{U_x U_y}}{x+y} \quad\text{and}\quad  V_{xy} = \dfrac{2n\, \sqrt{V_xV_y}}{x+y}
\end{equation}
are related by the B\"ackland  transformation
\begin{equation}
(\sqrt{U_x} - \sqrt{V_x})^2  = \frac{(2n -1)(U- V)}{x+y} =    (\sqrt{U_y} + \sqrt{V_y})^2 .
\EqTag{BackGour}
\end{equation}
\end{Theorem}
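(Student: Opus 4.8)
\emph{Plan.} The plan is to verify \EqRef{BackGour} by direct differentiation, with no recourse to the reduction diagram \EqRef{CONJ}; throughout I would work on a positivity domain (analogous to \EqRef{Todadomain} and to the domains of the earlier examples) on which $U_x,U_y,V_x,V_y>0$ and $(2n-1)(U-V)>0$. The preliminary step is an elementary reformulation of the Goursat equation: with $w_x,w_y>0$, a function $w$ satisfies $w_{xy}=\tfrac{2m}{x+y}\sqrt{w_xw_y}$ if and only if $p:=\sqrt{w_x}$ and $q:=\sqrt{w_y}$ solve the linear first-order system $p_y=\tfrac{m}{x+y}\,q$, $q_x=\tfrac{m}{x+y}\,p$. (The forward direction follows by writing $w_{xy}=2pp_y=2qq_x$ and dividing; conversely, given the linear system, the compatibility condition $(p^2)_y=(q^2)_x$ needed to reconstruct $w$ from $w_x=p^2$, $w_y=q^2$ holds automatically, and $w$ then satisfies the Goursat equation.) Writing $\sigma=x+y$, $w=U-V$, $p_U=\sqrt{U_x}$, $q_U=\sqrt{U_y}$, $p_V=\sqrt{V_x}$, $q_V=\sqrt{V_y}$, the two equations in \EqRef{BackGour} become $(p_U-p_V)^2=(q_U+q_V)^2=r^2$, where $r:=\sqrt{(2n-1)w/\sigma}$, and on the positivity domain the relevant branches are $p_U=p_V+r$, $q_U=r-q_V$. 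Consequently $w_x=p_U^2-p_V^2=2rp_V+r^2$ and $w_y=q_U^2-q_V^2=r^2-2rq_V$, which, once $p_V,q_V$ are regarded as known functions of $(x,y)$, is a complete (overdetermined, rank-one-fibre) first-order system $w_x=F_1(x,y,w)$, $w_y=F_2(x,y,w)$ for $w$, since $r$ depends only on $(x,y,w)$.

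For the forward implication I would assume $V$ solves the $n$-Goursat equation, that is $(p_V)_y=\tfrac{n}{\sigma}q_V$ and $(q_V)_x=\tfrac{n}{\sigma}p_V$. Differentiating $r^2=(2n-1)w/\sigma$ and substituting $w_x=F_1$, $w_y=F_2$ together with the linear system for $(p_V,q_V)$, one obtains, along any solution $w$ of the $w$-system, $r_y=\tfrac{n-1}{\sigma}r-\tfrac{2n-1}{\sigma}q_V$ and $r_x=\tfrac{2n-1}{\sigma}p_V+\tfrac{n-1}{\sigma}r$. Feeding these into $\partial_y p_U=(p_V)_y+r_y$ and $\partial_x q_U=-(q_V)_x+r_x$, the coefficients of $p_V,q_V$ and of $r$ collapse to give exactly $\partial_y p_U=\tfrac{n-1}{\sigma}q_U$ and $\partial_x q_U=\tfrac{n-1}{\sigma}p_U$. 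This single computation does double duty: it establishes the Frobenius integrability of the $w$-system (so $U$ exists locally, with one integration constant --- the expected freedom in a B\"acklund transformation), and, via the reformulation above, it shows that the resulting $U$ solves the $(n-1)$-Goursat equation. The reverse implication is the mirror-image calculation: starting from a solution $U$ of the $(n-1)$-Goursat equation, set $p_V=p_U-r$, $q_V=r-q_U$, so that $w_x=2rp_U-r^2$ and $w_y=2rq_U-r^2$; the same steps then yield $(p_V)_y=\tfrac{n}{\sigma}q_V$ and $(q_V)_x=\tfrac{n}{\sigma}p_V$, i.e. $V$ solves the $n$-Goursat equation. Together these two implications are the assertion that \EqRef{BackGour} relates the two equations.

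The only genuine work is the bookkeeping in the middle step: computing $r_x,r_y$ correctly by the chain rule through $w$, and checking that the terms weighted by $\sigma^{-1}$ coming from the Goursat equation for $V$ cancel precisely the obstruction both to the compatibility of the overdetermined $w$-system and to the Goursat equation for $U$. The one place that requires care is keeping the branch of the square root consistent --- which, as noted, is exactly what the positivity conditions defining the domain enforce; granted that, the remaining identities are routine algebra, and the argument goes through for every $n$.
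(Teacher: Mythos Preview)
Your proposal is correct and is precisely the kind of direct verification the paper alludes to: the paper does not spell out a proof beyond the remark that the theorem ``can be easily proved through direct computation, independent of \EqRef{CONJ}''. Your organization via the linearization $p=\sqrt{w_x}$, $q=\sqrt{w_y}$ (reducing the $m$-Goursat equation to $p_y=\tfrac{m}{\sigma}q$, $q_x=\tfrac{m}{\sigma}p$) is a clean device that makes the bookkeeping transparent, and the computations of $r_x,r_y$ and of $(p_U)_y,(q_U)_x$ check out exactly as you claim, with the Frobenius compatibility of the $w$-system and the $(n-1)$-Goursat equation for $U$ emerging from the same identity.
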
 
Note that for $n=1$ this coincides with the B\"acklund transformation given in \cite{zvyagin:1991a}. For $n=2$ the system $\CalI$ coincides with the system formed in Example \StRef{ExB} using equation \EqRef{HCeq}. Finally the B\"acklund transformations \EqRef{CONJ}, yield a chain of  B\"acklund transformations, having 1-dimensional fibres, relating \EqRef{ExC1} to the wave equation $\CalG_0$
\begin{equation}
\begin{gathered}
\begindc{\commdiag}[3]
\obj(-45, 12)[B1]{$\CalB_1$}
\obj(-15, 12)[B2]{$\CalB_2$}
\obj(15, 12)[B3]{$\CalB_3$}
\obj(-60, 0)[G0]{$\CalG_0$}
\obj(-30,  0)[G1]{$\CalG_1$}
\obj(0,  0)[G2]{$\CalG_2$}
\obj(30,  0)[G3]{$\CalG_3$}
\mor{B1}{G0}{$\bar \bfp_0$}[\atright, \solidarrow]
\mor{B1}{G1}{$\bfp_1$}[\atright, \solidarrow]
\mor{B2}{G1}{$\bar \bfp_1$}[\atright, \solidarrow]
\mor{B2}{G2}{$\bfp_2$}[\atright, \solidarrow]
\mor{B3}{G2}{$\bar \bfp_2$}[\atright, \solidarrow]
\mor{B3}{G3}{$\bfp_2$}[\atright, \solidarrow]
\enddc
\end{gathered}
\ldots
\EqTag{SeqBT}
\end{equation}
The system $\CalE_n = (\CalH^{n+2}_1 + \CalH^{n +2}_2)/\Gamma_{K}$ where $
\Gamma_K = \spn \{\ R+Y ,\ S_i + Z_i\ \}_{0\leq i \leq 2n-1}$,
defines a B\"acklund transformation between $\CalB_n$ and $\CalB_{n+1}$ with one dimensional fibres. Diagram \EqRef{SeqBT} can then be extended to include a chain of B\"acklund transformations between the systems $\CalB_n$.
\end{Example}

\section{Darboux Integrability and Symmetry Reduction}\StTag{DDI}

The examples in Section \StRef{Examples} are all {\it Darboux integrable}.  We begin this section by reviewing this definition in Section \StRef{DefDI}.  In Section \StRef{SRDI} we highlight the role that symmetry reduction of differential systems plays in the construction of Darboux integrable systems.  The Darboux integrability
of the examples in Section \StRef{Examples} is then shown in Section \StRef{ExP2} to be an immediate consequence of symmetry reduction. We also show how symmetry reduction can be used to determine the fundamental geometric invariants of a Darboux integrable system.

\subsection{Darboux Integrability}\StTag{DefDI}

To begin, we recall the following definition \cite{bryant-griffiths-hsu:1995a}. A {\deffont hyperbolic system of class $s$} is an EDS $\CalI$ on a $(s+4)$-dimensional manifold $M$ with the following property. For each point $x\in M$, there is an open neighbourhood $U$ of $x$ and a coframe $\{ \theta^1,\ldots,\theta^s, \hsigma^1,\hsigma^2,\csigma^1,\csigma^2 \}$ on $U$ such that
$$
\CalI|_U = \langle \ \theta^1,\,\ldots,\, \theta^s,\ \hsigma^1 \wedge \hsigma^2,\
 \csigma^1 \wedge \csigma^2\ \rangle_\text{\rm alg}.
$$
A generalization of this definition is the following \cite{anderson-fels-vassiliou:2009a}.

\begin{Definition}
\StTag{DI1}	
	An exterior differential system  $\CalI$ on $M$ is  {\deffont decomposable of type $[p,\rho]$}, where
	$p,\rho \geq 2$, if for each point $x \in M$ there is an open neighbourhood $U$ of $x$ and a coframe
\begin{equation*}
	\tilde \theta^1,\ \ldots,\ \tilde \theta^s,\
	\hsigma^1,\ \dots,\ \hsigma^{p}, \
	\csigma^1,\ \dots,\ \csigma^{\rho}
\end{equation*}
 on $U$	such that $\CalI|_U$ is algebraically generated by  1-forms and 2-forms
\begin{equation}
	\CalI|_U = \langle \, \tilde \theta^1, \ \dots, \ \tilde \theta^s,\ 
	A^1_{ab} \, \hsigma^a \wedge  \hsigma^b,\ldots,A^t_{ab} \, \hsigma^a \wedge  \hsigma^b,\
	B^1_{\alpha\beta} \ \csigma^\alpha \wedge  \csigma^\beta,\ldots,B^\tau_{\alpha\beta} \ \csigma^\alpha \wedge  \csigma^\beta
	\, \rangle_\text{\rm alg},
\EqTag{Intro5}	
\end{equation}
	where $t, \tau \geq  1$.
\end{Definition}	

A class $s$ hyperbolic system is a decomposable differential system of type [2, 2].  The standard rank $3k$ Pfaffian system on a $(5k+2)$-manifold for a $f$-Gordon system $u^\alpha_{xy}=f^\alpha(x,y,u^\beta, u^\beta_x,u^\beta_y)$, $\alpha,\beta=1,\ldots,k$ is a decomposable system of type [$k+1$, $k+1$].

The  characteristic systems defined in  \cite{bryant-griffiths-hsu:1995a} for a class $s$ hyperbolic differential system possess the following analogue for decomposable systems.
	
\begin{Definition}\StTag{Singsys}  Let $\CalI$ be a decomposable differential system with given decomposition \EqRef{Intro5}. The bundles $\hV, \cV \subset T^*M$ defined by
\begin{equation}
\hV = \text{span} \{\,\tilde \theta^e,\, \hsigma^a  \}
	\quad\text{and}\quad
	\cV = \text{span} \{\, \tilde \theta^e,\, \csigma^\alpha  \}
\EqTag{SPS}
\end{equation}
are called the {\deffont associated singular Pfaffian systems} with respect to the decomposition \EqRef{Intro5}. \end{Definition}
	
In the special case where $\CalI$ is a class $s$ hyperbolic system,  the decomposition in Definition \StRef{DI1} is unique up to an interchange of the singular Pfaffian systems. However, for a general decomposable system the decomposition in Definition \StRef{DI1} may not be unique. For further information on the relationship between a decomposable differential system and its singular systems see Theorem 2.6 of  \cite{anderson-fels-vassiliou:2009a}.


A {\deffont local first integral}  of $\hV$ is a smooth function 
	$f\: U \to \Real$, defined on an open set $U$ of $M$,  such that $d f \in \CalS(\hV)$, the space of sections of $\hV$. For each point $x \in M$ we define
\begin{equation}
	\hV_x^\infty  = \spn \{\, df _x \, | \,   \text{$f$ is a local first integral, defined about $x$}\,\} .
\end{equation}
	We shall always assume that  $\hV^\infty   = \bigcup _{x\in M} \hV_x^\infty$ is a constant rank bundle on $M$ which we call the {\deffont bundle of first integrals} of $\hV$. We also make analogous assumptions for $\cV^\infty$.  
		
\begin{Definition} \StTag{DInv} A  {\deffont local intermediate integral} for a decomposable system $\CalI$ is
a local first integral of either singular Pfaffian system $\hV$ or $ \cV$, that is, a $C^\infty(U)$ function $f$ such that $d f \in \CalS( \hV|_U)$ or  $df \in  \CalS(\cV|_U)$.
\end{Definition}

The number of (functionally) independent intermediate integrals is therefore given by the sum of the ranks  of  the completely integrable systems  $\hV^\infty$ and $\cV^\infty$.  The definition of a Darboux integrable differential system is then given in terms of the bundles of first integrals of its singular Pfaffian systems $\hV$ and $\cV$.

\begin{Definition}
\StTag{Intro2} A decomposable differential system $\CalI$ is {\deffont Darboux integrable}  
	if its singular Pfaffian systems  \EqRef{SPS} satisfy

\noindent
{\bf[i]}
\vskip -30pt
\begin{equation}\hV  + \cV^{\infty} = \cTM
	\quad\text{and}\quad
        \cV  + \hV^{\infty} = \cTM ,\quad \text{and}
\EqTag{Intro7}
\end{equation}
\par
\smallskip
\noindent
{\bf[ii]}
\vskip -30pt
\begin{equation}
 \hV^{\infty} \cap \cV^{\infty} = \{\,0\, \}.
\EqTag{Intro8}
\end{equation}
\end{Definition}

This definition implies that if a decomposable system $\CalI$ on an $n$-manifold is Darboux integrable, then it admits at least $n-\rank(\hV)$ intermediate integrals which are first integrals of $\cV$ and at least $n-\rank (\cV)$ intermediate integrals which are first integrals of $\hV$.
In the case where $\CalI$ itself admits no first integrals, condition \EqRef{Intro7} is sufficient to
establish Darboux integrability, see Theorem 4.6 in \cite{anderson-fels:2014a}.


\subsection{Symmetry Reduction and Darboux Integrabilty}\StTag{SRDI}

Theorem 6.1 and Theorem 7.7 in \cite{anderson-fels:2014a} provide a general way to construct Darboux integrable systems  and to compute their intermediate integrals through symmetry reduction.  We summarize these theorems in Theorem \StRef{Dreduce} below and  then,
in Section \StRef{ExP2}, apply them to the examples from Section \StRef{Examples}.

To state these theorems, let $\CalK_1$ and $\CalK_2$ be EDS  on manifolds $M_1$ and $M_2$.
	The {\deffont direct sum } $\CalK_1 + \CalK_2$ is the EDS on $M_1 \times M_2$  which is
	algebraically generated by the pullbacks of $\CalK_1$ and $\CalK_2$  to $M_1 \times M_2$ 
	by the canonical projection maps $\pi_a\colon M_1\times M_2 \to M_a$, $a=1,2$.

	Let $G$ be a Lie group acting on $M_a$,  let  $L$ be a subgroup of  the product  group $G \times G$, 
	let $\rho_a : G\times G \to G$, $a=1,2$ be the projection onto the $a^{\text{th}}$ factor and let 
	$L_a = \rho_a(L) \subset  G$.  The 
	action of $L$ on $M_1\times M_2$  is  then given in terms of these projection maps and the actions of $G$ on 
	$M_1$  and $M_2$ by
\begin{equation}
	  \ell \cdot ( x_1, x_2) = (\rho_1(\ell) \cdot x_1, \,\rho_2(\ell) \cdot x_2) \quad \text{for $\ell \in L$}.
\EqTag{Laction}
\end{equation}
	The projection maps $\pi_a: M_1 \times M_2 \to M_a$ are equivariant  with respect the actions of $L$ and $L_a$, that  is,  
	for any $\ell \in L$ and  $x\in M_1 \times M_2$ 
\begin{equation}
	\pi_a(\ell \cdot x) = \rho_a(\ell) \cdot \pi_a(x).
\EqTag{Lequiv}
\end{equation}
The actions of $L$ and $L_a$ can be then used to construct the commutative diagram 
\begin{equation}
\begin{gathered}
\beginDC{\commdiag}[3]
\obj(0,22)[M1M2]{$M_1\times M_2$}
\obj(0, 8)[B]{$(M_1\times M_2)/L$}
\obj(-32, 0)[I1]{$M_1/L_1$}
\obj(32, 0)[I2]{$M_2/L_2$\,}
\mor{B}{I1}{$\tbfp_{L_1}$}[\atleft, \solidarrow]
\mor{B}{I2}{$\tbfp_{L_2}$}[\atright, \solidarrow]
\mor{M1M2}{B}{$\bfq_L$}[\atright, \solidarrow]
\mor{M1M2}{I1}{$\tbfpi_{L_1}$}[\atright, \solidarrow]
\mor{M1M2}{I2}{$\tbfpi_{L_2}$}[\atleft, \solidarrow]
\enddc
\end{gathered}
\EqTag{CDCP}
\end{equation}
where $\tbfpi_{L_a}= \bfq_{L_a} \circ \pi_a$, and $\tbfp_{L_a}( L (x_1,x_2))= L_a x_a$. 

We say that a subgroup $L\subset G\times G$ is diagonal if $L \subset G_{\diag}$. In
this case $L_a \cong L$. If $L=G_\diag=\{(g,g)\, | \, g\in G\}$, then $L_a = G$ and the actions of $L_a$ on $M_a$ coincide with the original actions of $G$ on  $M_a$.

The following theorem, whose statement and proof rely on diagram \EqRef{CDCP}, summarizes the essential facts  regarding the construction of Darboux integrable systems by 
	symmetry reduction.

  \begin{Theorem}
\StTag{Dreduce}  
	Let $\CalK_a, a=1,2$ be EDS on $M_a$, $a = 1,2$ which are algebraically generated by
	1-forms and 2-forms. Assume that $( K_a^1)^\infty  = 0$ (where $\CalK^1_a=\CalS(K^1_a)$, see Section \StRef{EREDS}) and let
\begin{equation}
	\hW= K_1^1 + T^*M_2 \quad {\rm and} \quad  \cW= T^*M_1 +K_2^1.
\EqTag{Wsing}
\end{equation}
Furthermore, consider a  Lie group  $G$  which  acts freely on $M_1$ and $M_2$,  is a {\it common} symmetry group of  both $\CalK_1$  
	and $\CalK_2$, and  acts transversely to $\CalK_1$  and $\CalK_2$. Let $L\subset G\times G$ be a given subgroup and assume that the actions of $L$ on $M_1\times M_2$ and  $L_a=\rho_a(L)$ on $M_a$ 
	are regular and set
\begin{equation}
	M = (M_1 \times M_2)/L, \quad \hV = (K^1_1+T^*M_2)/L, \quad \cV=(T^*M_1+K^1_2)/L.
\EqTag{WMODG}
\end{equation}
	Finally, assume that $\hV^\infty$ and $\cV^\infty$ are constant rank bundles.

\smallskip
\noindent
{\bf [i]} The reduced differential system  $ \CalI = (\CalK_1 + \CalK_2)/L$ on  $M/L$
	is a decomposable Darboux integrable system with singular Pfaffian systems $\{\hV, \cV\}$ as defined in  \EqRef{WMODG}.

\smallskip
\noindent
{\bf [ii]} The bundle of first integrals $\hV^{\infty} $ and $\cV^{\infty} $ for  $\hV$ and $\cV$ satisfy
\begin{equation}
\hV^{\infty} = \tbfp_{L_2}^*(T^*(M_2/L_2)),\quad\text{and}\quad \cV^{\infty} = \tbfp_{L_1}^*(T^*(M_1/L_1)),
\EqTag{QINVS}
\end{equation}
where $ \tbfp_{L_a}: M/L \to M_a/L_a$  are the orbit projection maps given in equation \EqRef{CDCP}.
\end{Theorem}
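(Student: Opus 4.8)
\emph{Strategy.} The plan is to present both assertions as the product–symmetry–reduction case of the general reduction theorems for Darboux integrable systems in \cite{anderson-fels:2014a} (Theorem~6.1 for {\bf[i]}, Theorem~7.7 for {\bf[ii]}); the hypotheses imposed here on $G$ and on $L$ are exactly what is needed to invoke those theorems, so the work is in verifying them. Concretely I would: (a) check that $\CalI_0:=\CalK_1+\CalK_2$ is already a decomposable Darboux integrable system on $M_1\times M_2$ whose associated singular Pfaffian systems are $\hW$ and $\cW$; (b) check that the $L$-action on $M_1\times M_2$ is free, regular, preserves the decomposition, and is transverse to $\CalI_0$, $\hW$ and $\cW$; (c) apply the cited theorems, adding the short descent argument that identifies the reduced bundles of first integrals. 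For step (a) one uses the canonical splitting $T^*(M_1\times M_2)=\pi_1^*(T^*M_1)\oplus\pi_2^*(T^*M_2)$ together with coframes on $M_1$ and $M_2$ adapted to $K_1^1$ and $K_2^1$: this exhibits $\CalI_0$ in the normal form \EqRef{Intro5} with singular Pfaffian systems exactly $\hW=K_1^1+T^*M_2$ and $\cW=T^*M_1+K_2^1$.

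\emph{The product system is Darboux integrable.} Here is where the hypothesis $(K_a^1)^\infty=0$ enters. Since $\ann(\hW)=\ann(K_1^1)$, regarded in $TM_1\oplus TM_2=T(M_1\times M_2)$, lies in the $TM_1$ summand, a local function $f$ with $df\in\CalS(\hW)$ restricts on every slice $M_1\times\{x_2\}$ to a first integral of $K_1^1$, hence is locally constant there by $(K_1^1)^\infty=0$; thus $f$ is a local pullback from $M_2$, and conversely every such pullback has differential in $\hW$. Therefore $\hW^\infty=\pi_2^*(T^*M_2)$, and symmetrically $\cW^\infty=\pi_1^*(T^*M_1)$; both are constant rank. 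The Darboux conditions \EqRef{Intro7}--\EqRef{Intro8} for $\CalI_0$ then follow by inspection: $\hW+\cW^\infty=\hW+\pi_1^*(T^*M_1)=T^*(M_1\times M_2)=\cW+\hW^\infty$, and $\hW^\infty\cap\cW^\infty=\pi_2^*(T^*M_2)\cap\pi_1^*(T^*M_1)=0$.

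\emph{Admissibility of $L$, and {\bf[i]}.} Freeness of $G$ on $M_1$ and on $M_2$ forces $L$ to act freely on $M_1\times M_2$; regularity of $L$ and of $L_a=\rho_a(L)$ is assumed. As $G$ is a symmetry of the constant-rank system $\CalK_a$ it preserves $K_a^1$, so $L$ preserves $\hW$ and $\cW$. For transversality I would use $\ann(\hW)=\ann(K_1^1)\oplus 0\subset TM_1\oplus TM_2$: if an infinitesimal generator $(\xi_{1,M_1},\xi_{2,M_2})$ of $L$ lies in $\ann(\hW)$, then $\xi_{2,M_2}=0$, hence $\xi_2=0$ by freeness, while $\xi_{1,M_1}\in\ann(K_1^1)\cap\bfGamma_G=0$ by transversality of $G$ to $\CalK_1$, hence $\xi_1=0$; the same computation handles $\cW$ and $\CalI_0$. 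With Step~(a) this places $(\CalI_0,\hW,\cW,L)$ under the hypotheses of Theorem~6.1 of \cite{anderson-fels:2014a}, giving {\bf[i]}: $\CalI=\CalI_0/L$ is decomposable and Darboux integrable with singular Pfaffian systems $\hV=\hW/L$, $\cV=\cW/L$, as in \EqRef{WMODG}.

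\emph{The reduced first integrals, and the main obstacle.} The key point is that passage to first integrals commutes with the reduction. If $\bar f$ is a local first integral of $\hV$, then using $\bfq_L^*\CalS(\hV)\subset\CalS(\hW)$ the function $\bfq_L^*\bar f$ is an $L$-invariant local first integral of $\hW$; conversely, if $h$ is an $L$-invariant local first integral of $\hW$, then $dh$ is $L$-basic (semi-basicity and $L$-invariance of $dh$ being immediate from $L$-invariance of $h$) and lies in $\CalS(\hW)$, hence descends to a local first integral of $\hV=\hW/L$. So $\hV^\infty$ is precisely the $\bfq_L$-descent of the $L$-invariant part of $\hW^\infty$. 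Since $\hW^\infty=\pi_2^*(T^*M_2)$, its $L$-invariant local sections are the $\pi_2^*(dg)$ with $g$ an $L_2$-invariant local function on $M_2$, i.e.\ $g=\bfq_{L_2}^*h$ for a local function $h$ on $M_2/L_2$; from $\bfq_L^*(\tbfp_{L_2}^*\,dh)=\tbfpi_{L_2}^*\,dh=\pi_2^*(dg)$ and injectivity of $\bfq_L^*$ on forms I would conclude $\hV^\infty=\tbfp_{L_2}^*(T^*(M_2/L_2))$, and symmetrically $\cV^\infty=\tbfp_{L_1}^*(T^*(M_1/L_1))$; this is {\bf[ii]} (and matches Theorem~7.7 of \cite{anderson-fels:2014a}). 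The routine content is the transversality checks and the normal-form bookkeeping of Step~(a); the hard part will be the descent equivalence, and in particular showing that $\hV^\infty$ is the \emph{full} $L$-invariant part of $\hW^\infty$ rather than a proper subbundle — this rests on the sharp identity $\hW^\infty=\pi_2^*(T^*M_2)$, which is the one place the hypothesis $(K_a^1)^\infty=0$ is genuinely used.
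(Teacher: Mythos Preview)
Your proposal is correct and matches the paper's treatment: the paper does not give an independent proof of this theorem but explicitly presents it as a summary of Theorem~6.1 and Theorem~7.7 of \cite{anderson-fels:2014a} (see the sentence introducing the theorem and Remark~{\bf[i]} following it), which is precisely the route you outline. Your additional verification that $\CalK_1+\CalK_2$ is itself Darboux integrable with $\hW^\infty=\pi_2^*(T^*M_2)$ and $\cW^\infty=\pi_1^*(T^*M_1)$, and the transversality check for $L$, are the hypothesis-checks one would need to invoke those cited theorems, and your descent argument for~{\bf[ii]} is the natural one.
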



When $L$ is a diagonal subgroup and $\CalI=(\CalK_1+\CalK_2)/L$, we call $L$ the {\deffont{Vessiot group of $\CalI$}} and we call $(\CalK_1+\CalK_2)/L=\CalI$ {\deffont the canonical quotient representation of $\CalI$}. We also define the Vessiot algebra $\vess(\CalI)$ to be the  Lie algebra of the Vessiot group $L$. 
It then follows from Theorem \StRef{Dreduce} that
\begin{equation}
\dim \vess(\CalI) = \dim L = \dim M - \rank (\hV^\infty) - \rank (\cV^\infty).
\EqTag{VAdim}
\end{equation}
In the case where $L=G_{\diag}$, then $L\cong G$ and $G$ is the Vessiot group. 

If $L\subset G\times G$ is {\it not} diagonal then $L$ is not the Vessiot group of $(\CalK_1+\CalK_2)/L$. In this case the following theorem (see Section 6.3 in \cite{anderson-fels:2014a}) describes the reduction which produces the Vessiot algebra and canonical quotient representation of $\CalI=(\CalK_1+\CalK_2)/L$.
\noindent
\begin{Theorem} \StTag{Rdiag} Reduction to a Diagonal Action.  Let $\CalK_a$, $G$ and $L$ be as in Theorem \StRef{Dreduce}
and define the subgroups $A_1\subset L_1$ and $A_2\subset L_2$ by
\begin{equation}
	A_1 = \rho_1(\ker \rho_2) = \{ g_1\in G \,|\, (g_1,e) \in L\} \quad\text{and} \quad 
	A_2 = \rho_2(\ker \rho_1)  =  \{ g_2\in G \,|\, (e, g_2) \in L\} .
	\EqTag{AGS}
\end{equation}
The differential systems   $\widetilde \CalK_a= \CalK_a/A_a$ on $\widetilde M_a=M_a/A_a$, 
and the group $\widetilde L = L/(A_1 \times A_2)$ satisfy the hypothesis of Theorem \StRef{Dreduce}, and
\begin{equation}
\CalI= (   \widetilde \CalK_1 +\widetilde \CalK_2 ) / \widetilde L_{\diag}.
\EqTag{CQR}
\end{equation}
\end{Theorem}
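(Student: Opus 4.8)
The plan is to verify that $(\widetilde\CalK_1,\widetilde\CalK_2,\widetilde L_{\diag})$ meets every hypothesis of Theorem \StRef{Dreduce}, and then to deduce \EqRef{CQR} from the compatibility of symmetry reduction with the factorization of $\bfq_L$ through $\bfq_{A_1\times A_2}$, i.e.\ \emph{reduction in stages}. I would begin with the group theory. The subgroup $A_1\times A_2$ is contained in $L$, since $(g_1,e),(e,g_2)\in L$ whenever $g_1\in A_1$ and $g_2\in A_2$, so $(g_1,g_2)=(g_1,e)(e,g_2)\in L$; it is normal in $L$ because conjugating $(g_1,e)$ by any $(a_1,a_2)\in L$ yields $(a_1g_1a_1^{-1},e)\in L$, so $A_1$ is normalized by $L_1$ and likewise $A_2$ by $L_2$; and it is closed, being $\ker(\rho_2|_L)\cdot\ker(\rho_1|_L)$. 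The composite of $\rho_a\colon L\to L_a$ with $L_a\to L_a/A_a$ has kernel exactly $A_1\times A_2$ --- if $(g_1,g_2)\in L$ with $g_1\in A_1$ then $(g_1,e)\in L$, hence $(e,g_2)\in L$ and $g_2\in A_2$ --- giving canonical isomorphisms $L_1/A_1\cong L/(A_1\times A_2)=\widetilde L\cong L_2/A_2$ under which $\ell=(g_1,g_2)\in L$ has the common image $g_1A_1=g_2A_2=\ell(A_1\times A_2)$. Consequently $\widetilde L$ acts on each $\widetilde M_a=M_a/A_a$ through $\rho_a$; this action is free because $G$ acts freely on $M_a$; and the induced action of $L$ on $\widetilde M_1\times\widetilde M_2$ is the diagonal $\widetilde L$-action, so it is governed by $\widetilde L_{\diag}\subset\widetilde L\times\widetilde L$. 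Regularity of the $A_a$-actions, of the residual $\widetilde L$-actions on the $\widetilde M_a$, and of $\widetilde L_{\diag}$ on $\widetilde M_1\times\widetilde M_2$ follows from regularity of the $L_a$- and $L$-actions, since properness passes to closed subgroups and to quotients; alternatively one may read these as part of the data, as the statement tacitly does in writing $\widetilde M_a=M_a/A_a$.

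Next I would transfer the exterior differential system hypotheses. Since $\bfGamma_{A_a}\subseteq\bfGamma_{L_a}\subseteq\bfGamma_G$, the group $A_a$ is a symmetry group of $\CalK_a$ acting transversely to it, so by the algorithm of Section \StRef{LGE} (equations \EqRef{sbg}--\EqRef{semib}) the reduced system $\widetilde\CalK_a=\CalK_a/A_a$ is again algebraically generated by $1$- and $2$-forms, the residual group $\widetilde L$ is a symmetry group of $\widetilde\CalK_a$, and it acts transversely to $\widetilde\CalK_a$. Moreover $(\widetilde K_a^1)^\infty=0$: a local first integral $f$ of $\widetilde K_a^1$ pulls back along the submersion $\bfq_{A_a}$ to a function with $d(\bfq_{A_a}^*f)\in\CalS(K_a^1)$, hence to a first integral of $K_a^1$, which is constant by hypothesis, so $f$ is constant. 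Thus $\widetilde\CalK_1$, $\widetilde\CalK_2$ and $\widetilde L$ satisfy all the hypotheses of Theorem \StRef{Dreduce} apart from the constant rank of the bundles of first integrals, which is settled in the last step.

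For the conclusion, observe that because $A_1\times A_2\trianglelefteq L$ and all the actions are regular, $\bfq_L$ factors as $\bfq_{\widetilde L_{\diag}}\circ\bfq_{A_1\times A_2}$ modulo the canonical diffeomorphism $(M_1\times M_2)/L\cong(\widetilde M_1\times\widetilde M_2)/\widetilde L_{\diag}$, where $\bfq_{A_1\times A_2}=\bfq_{A_1}\times\bfq_{A_2}$. Unwinding \EqRef{Imodp}, a form $\bar\theta$ on the base satisfies $\bfq_L^*\bar\theta\in\CalK_1+\CalK_2$ iff $\bfq_{\widetilde L_{\diag}}^*\bar\theta\in(\CalK_1+\CalK_2)/(A_1\times A_2)$, so $\CalI=\bigl((\CalK_1+\CalK_2)/(A_1\times A_2)\bigr)/\widetilde L_{\diag}$, and it remains to prove $(\CalK_1+\CalK_2)/(A_1\times A_2)=\widetilde\CalK_1+\widetilde\CalK_2$. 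The inclusion $\supseteq$ is immediate, since $\bfq_{A_a}^*\widetilde\CalK_a\subseteq\CalK_a$ forces $\bfq_{A_1\times A_2}^*(\widetilde\pi_a^*\widetilde\CalK_a)=\pi_a^*(\bfq_{A_a}^*\widetilde\CalK_a)\subseteq\pi_a^*\CalK_a\subseteq\CalK_1+\CalK_2$, and the left side generates an ideal (here $\pi_a$ and $\widetilde\pi_a$ are the projections of the two product manifolds onto their $a$-th factors). For $\subseteq$ one uses the generators of Section \StRef{LGE}: $A_a$-semibasic $1$- and $2$-form generators of $\CalK_a$, which descend to $\widetilde\CalK_a$ via a local cross-section by \EqRef{semib}, pull back to $(A_1\times A_2)$-semibasic generators of $\CalK_1+\CalK_2$ on $M_1\times M_2$ --- a form on $M_1$ is automatically annihilated by the infinitesimal generators of the $A_2$-action --- and their images under the corresponding product cross-section are precisely the generators of $\widetilde\CalK_1+\widetilde\CalK_2$. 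Applying the same reasoning to the subbundles $\hW=K_1^1+T^*M_2$ and $\cW=T^*M_1+K_2^1$ gives $\hV=\hW/L=(\widetilde K_1^1+T^*\widetilde M_2)/\widetilde L_{\diag}$ and $\cV=\cW/L=(T^*\widetilde M_1+\widetilde K_2^1)/\widetilde L_{\diag}$ --- the cotangent factor reduces cleanly because a $1$-form on $\widetilde M_a$ is exactly an $A_a$-basic $1$-form on $M_a$ --- so the bundles $\hV^\infty$ and $\cV^\infty$ obtained from the reduced data agree with the original ones and are of constant rank by hypothesis. Theorem \StRef{Dreduce} now applies to $(\widetilde\CalK_1,\widetilde\CalK_2,\widetilde L_{\diag})$, and together with the displayed identity this yields \EqRef{CQR}.

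The only step that is not formal is the direct-sum identity $(\CalK_1+\CalK_2)/(A_1\times A_2)=\widetilde\CalK_1+\widetilde\CalK_2$: reduction in stages at the level of the underlying manifolds and orbit maps is classical, but showing that it respects the differential ideal requires the explicit semibasic-generator description of Section \StRef{LGE}, and it is the freeness and transversality of the $A_a$-actions that reduce this to the linear algebra carried out there. A secondary point worth checking carefully is that the regularity hypotheses of Theorem \StRef{Dreduce} genuinely descend to the intermediate quotient by $A_1\times A_2$.
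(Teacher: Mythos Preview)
The paper does not actually prove Theorem \StRef{Rdiag}; it merely states the result and refers the reader to Section 6.3 of \cite{anderson-fels:2014a} for the details. So there is no in-paper argument to compare against. Your approach --- factor $\bfq_L$ through $\bfq_{A_1\times A_2}$ and apply reduction in stages --- is the natural one and is precisely the strategy used in the cited reference. The group-theoretic lemma that $A_1\times A_2$ is a closed normal subgroup of $L$ with $L/(A_1\times A_2)\cong L_1/A_1\cong L_2/A_2$ is handled correctly, and your verification that the EDS hypotheses descend to $\widetilde\CalK_a$ (generation by $1$- and $2$-forms, vanishing of $(\widetilde K_a^1)^\infty$, symmetry and transversality of the residual $\widetilde L$-action) is sound.

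The one place to tighten is the direct-sum identity $(\CalK_1+\CalK_2)/(A_1\times A_2)=\widetilde\CalK_1+\widetilde\CalK_2$. Your argument for the inclusion $\subseteq$ implicitly assumes that every $(A_1\times A_2)$-semibasic generator of $\CalK_1+\CalK_2$ arises from semibasic generators of the individual $\CalK_a$. This is true, but it uses more than the abstract algorithm of Section \StRef{LGE}: you need that the action is a \emph{product} action, so that an $\CalI$--$\Gamma_{A_1\times A_2}$ adapted coframe on $M_1\times M_2$ can be chosen as the product of $\CalK_a$--$\Gamma_{A_a}$ adapted coframes on the factors. With that observation the semibasic generators split cleanly and the identity follows. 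You correctly flag the regularity issue as a secondary point; in practice it is handled either by assuming free proper actions throughout (so that regularity is automatic for closed subgroups and quotients) or, as you suggest, by reading the existence of $\widetilde M_a$ as part of the standing hypotheses.
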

The action of $\widetilde L$ on $\widetilde M_a$ in Theorem \StRef{Rdiag} is defined using the fact that $\widetilde L \cong L_a/A_a$. See Section 6.3 of \cite{anderson-fels:2014a} for details. 

We conclude this subsection with a few remarks.
 
\noindent 
{\bf [i]} Theorem \StRef{Dreduce} was first proved for the case $L=G_\diag$ in  \cite{anderson-fels-vassiliou:2009a}, see Corollary 3.4. The extension to non-diagonal actions is given by Theorem 6.1 in \cite{anderson-fels:2014a}. This extension is the essential result we use to construct B\"acklund transformations between Darboux integrable systems.

\noindent
{\bf [ii]} A remarkable fact relating Darboux integrable systems and symmetry reduction of EDS is that  Theorem \StRef{Dreduce} admits a local converse (Theorem 1.4, \cite{anderson-fels-vassiliou:2009a}). Specifically, every Darboux integrable system can be realized locally as the quotient of a pair of exterior differential systems with a common symmetry group whose dimension is given by \EqRef{VAdim}. 

\noindent
{\bf [iii]} The system $\CalK_1+\CalK_2$ in Theorem \StRef{Dreduce} is Darboux integrable with singular Pfaffian systems in equation  \EqRef{Wsing}.  This observation allows us to say, roughly speaking, that Theorem \StRef{Dreduce} shows Darboux integrability is preserved under reduction; see also Theorem 3.2 in \cite{anderson-fels-vassiliou:2009a}.

\noindent
{\bf [iv]} An application of Theorem \StRef{Dreduce} to the Cauchy problem for Darboux integrable systems can be found in \cite{anderson-fels:2013a}.

\subsection{Finding Intermediate Integrals using Symmetry Algebras}\StTag{Firstint}

The examples in Section \StRef {Examples} were constructed using symmetry algebras of vector fields. Since we will continue in this manner in Section \StRef{ExP2}, we now provide a local coordinate form for the computation of the intermediate integrals produced by the local version of part {\bf [ii]} in Theorem \StRef{Dreduce} using Lie algebras of vector fields.  This 
requires that we first produce the local coordinate form of diagram \EqRef{CDCP} in this case.

Let $\Gamma_G^a$ be Lie algebras of vector fields on the manifolds $M_a$, $a=1,2$ which are  isomorphic as abstract Lie algebras, and let $\Gamma_L \subset \Gamma_G^1+\Gamma_G^2$ be a subalgebra of vector fields on $M_1\times M_2$.  Theorem \StRef{Dreduce} requires we compute the projected algebras
\begin{equation}
\Gamma_{L_a} = \rho_a(\Gamma_{L}),
\EqTag{AAS}
\end{equation}
where $\rho_a:\Gamma^1_{G} + \Gamma^2_{G} \to \Gamma^a_{G}, a=1,2$ is the Lie algebra homomorphism which projects onto the $a^{\text{th}}$ factor. If $X \in \Gamma^1_G+\Gamma_G^2$, then $\rho_a(X) = \pi_{a*}(X)$.

The invariants of the Lie algebras of vector fields $\Gamma_{L_a}$ on $M_a$ can now be used to compute the intermediate integrals of $(\CalK_1+\CalK_2)/\Gamma_L$ (or the first integrals of $\hV$ and $\cV$) using equation \EqRef{QINVS}. Let $\{\cJ^a\}$ and $\{\hJ^i\}$ be sets of independent $\Gamma_{L_1}$ and $\Gamma_{L_2}$  local invariants on $M_1$ and $M_2$ respectively. The projection maps $\bfq_{\Gamma_{L_a}}:M_a\to M_a/\Gamma_{L_a}$ can be written as
\begin{equation}
\bfq_{\Gamma_{L_1}} = [  \ \cf^a = \cJ^a(y^t) \ ]\, , \ {\rm and} \quad \bfq_{\Gamma_{L_2}} = [  \ \cf^i = \hJ^i(x^s) \ ], 
\EqTag{qLc}
\end{equation}
where $y^t$ and $x^s$ are local coordinates on $M_1$ and $M_2$, $\cf^a$ are local coordinates on $M_1/\Gamma_{L_1}$,  and $\hf^i$ on $M_2/\Gamma_{L_2}$. Similarly, let $\{h^A\}$ be a set of independent  $\Gamma_L$ invariants on $M_1\times M_2$ so that 
$\bfq_{\Gamma_L}:M_1\times M_2 \to (M_1\times M_2)/\Gamma_L$ can be written as
$$
\bfq_{\Gamma_L}= [\, u^A= h^A(y^t, x^s) \ ] ,
$$
where $u^A$ are local coordinates on $(M_1\times M_2)/\Gamma_L$. 

The functions $\pi_1^{*} (\cJ^a)$ and $\pi_2^*( \hJ^i)$ on $M$ are $\Gamma_L$  invariant and hence, just as in the argument that produces \EqRef{pinc}, there exists functions $\cF^a$ and $\hF^i$ on $M$ such that
$$
\pi_1^{*} (\cJ^a) = \cF^a( h^A) \quad {\rm and} \quad \pi_2^{*} (\hJ^a) = \hF^a( h^A).
$$
The maps $\tbfp_{\Gamma_{L_a}}:M/\Gamma_L \to M_a/\Gamma_{L_a}$ are then given in coordinates by
\begin{equation}
\bfp_{\Gamma_{L_1}}= [   \ \cf^a = \cF^a( u^A) \ ] \quad {\rm and} \quad \bfp_{\Gamma_{L_2}}= [   \ \hf^a = \hF^a( u^A) \ ].
\EqTag{cfpL1}
\end{equation}
Finally,  by equation \EqRef{QINVS}, the functions $\cI^a$ and $\hI^i$ on $(M_1\times M_2)/\Gamma_L$ defined by
\begin{equation}
\cI^a = \cf^a\circ \tbfp_{\Gamma_{L_1}}(u^A)= F^a(u^A)  \quad {\rm and} \quad \hI^i =  \hf^i\circ \tbfp_{\Gamma_{L_2}} (u^A)  = \hF^i( u^A)
\EqTag{infFI}
\end{equation}
are a complete set of intermediate integrals of $(\CalK_1+\CalK_2)/\Gamma_L$. That is we have,
$$
\hV^\infty = \spn \{ \ \tbfp_{\Gamma_{L_2}}^* d \hf^ i\ \}\quad  {\rm and} \quad \cV^\infty = \spn \{\ \tbfp_{\Gamma_{L_1}}^* d \cf^ a \ \}.
$$   

The intermediate integrals for $(\CalK_1+\CalK_2)/L$ can also be computed using a local cross-section $\sigma:U  \to M_1\times M_2$, $U \subset (M_1\times M_2)/\Gamma_L$  (as in equation \EqRef{csa}) by 
\begin{equation}
\cI^a=  \cJ^a \circ \pi^1 \circ \sigma \, , \ {\rm and} \quad \hI^i = \hJ^i \circ \pi^2 \circ \sigma.
\EqTag{FIpb}
\end{equation}
{\it Equation \EqRef{FIpb} clearly shows how the intermediate integrals of $(\CalK_1+\CalK_2)/\Gamma_L$ can be expressed in terms of the invariants of $\Gamma_{L_a}$ on $M_a$}.

\section{Examples: Intermediate Integrals by Quotients}\StTag{ExP2}

The differential systems $\CalI_1$, $\CalI_2$ and $\CalB$ in our examples from Section \StRef{Examples} are Darboux integrable. Therefore, we use Theorem \StRef{Dreduce} (or Section \StRef{Firstint}) to  compute the intermediate integrals for these differential systems. While many of
the intermediate integrals are well-known \cite{goursat:1897a}, \cite{vessiot:1942a}, our goal
here is to illustrate the important relationship between the intermediate integrals of $\CalI$
and the (differential) invariants of its Vessiot group.  In Section \StRef{CVA} we use Theorem \StRef{Rdiag}  to compute the Vessiot algebra for the examples constructed from non-diagonal group actions. 

\medskip
The Lie algebra $\Gamma_L$ in Section \StRef{Firstint}  will  be replaced by $\Gamma_H, \Gamma_{G_1}$ and $\Gamma_{G_2}$ for the examples,  in which case diagram \EqRef{CDCP} becomes,
\begin{equation}
\begin{gathered}
\beginDC{\commdiag}[3]
\obj(-27,22)[M1M2]{$M_1\times M_2$}
\obj(-27, 8)[B]{$N$}
\obj(-54, 0)[I1]{$M_1/\Gamma_{H_1}$}
\obj(0, 0)[I2]{$M_2/\Gamma_{H_2} $\,}
\obj(47,22)[M1M2t]{$M_1\times M_2$}
\obj(47, 8)[Bt]{$N_a$}
\obj(20, 0)[I1t]{$M_1/\Gamma_{G_a,1}$}
\obj(74, 0)[I2t]{$M_2/\Gamma_{G_a,2}$\,}
\mor{B}{I1}{$\tbfp_{\Gamma_{H_1}}$}[\atleft, \solidarrow]
\mor{B}{I2}{$\tbfp_{\Gamma_{H_2}}$}[\atright, \solidarrow]
\mor{M1M2}{B}{$\bfq_H$}[\atright, \solidarrow]
\mor(-32,21){I1}{$\tbfpi_{\Gamma_{H_1}}$}[\atright, \solidarrow]
\mor(-22,21){I2}{$\tbfpi_{\Gamma_{H_2}}$}[\atleft, \solidarrow]
\mor{Bt}{I1t}{$\tbfp_{\Gamma_{G_a,1}}$}[\atleft, \solidarrow]
\mor{Bt}{I2t}{$\tbfp_{\Gamma_{G_a,2}}$}[\atright, \solidarrow]
\mor{M1M2t}{Bt}{$\bfq_{G_a}$}[\atright, \solidarrow]
\mor(42,21){I1t}{$\tbfpi_{\Gamma_{G_a,1}}$}[\atright, \solidarrow]
\mor(52,21){I2t}{$\tbfpi_{\Gamma_{G_a,2}}$}[\atleft, \solidarrow]
\enddc
\end{gathered}
\EqTag{CDCP2}
\end{equation}
where $\Gamma_{H_i}=\rho_i(\Gamma_H)$ and $\Gamma_{G_a,i} = \rho_i(\Gamma_{G_a})$. Using equation \EqRef{infFI} or \EqRef{FIpb} with the coordinate form of the maps from diagram \EqRef{CDCP2} will produce the intermediate integrals for our examples.


\subsection{Examples}

\begin{Example}\StTag{ExA1p}   We calculate the intermediate integrals for the
three Darboux integrable systems $\CalB$, $\CalI_1$, and $\CalI_2$ in Example \StRef{ExA1}.  We start with the system 
$\CalB$ which was constructed as a quotient by the Lie algebra of vector fields $\Gamma_H$ defined in \EqRef{ExA2}. The quotient map $\bfq_{\Gamma_H}:M \to N$ is given in coordinates by equation \EqRef{bfpH}.  The structure equations for $B=\spn\{\, \beta^1, \beta^2\, \}$ in equation \EqRef{ExA3} are
\begin{equation}
d \beta^1 =-\hpi \wedge \homega+ \rme^W \beta_2 \wedge \comega, \quad  d \beta^2=- \cpi\wedge \comega + \rme^V \beta^1\wedge \homega\, ,
\EqTag{StructB1}
\end{equation}
where
\begin{equation*}
\homega =dx,\quad \hpi=dV_x-\rme^{V+W}dy, \quad \comega =dy, \quad
\cpi= dW_y -\rme^{V+W} dx.
\end{equation*} 
These structure equations show that the singular Pfaffian systems (Definition \StRef{Singsys}) for $\CalB$ are
\begin{equation}
\hV = \spn\{ \ \beta ^1 ,\ \beta^2,\ \hpi,\ \homega\  \}  \quad {\text{and}} \quad
\cV = \spn\{\ \beta^1 ,\ \beta^2,\ \cpi,\ \comega \ \}.
\EqTag{BH1}
\end{equation}

The first integrals for the singular systems $\hV$ and $\cV$ are straight-forward to compute directly, but here we wish to use part {\bf [ii]}  of Theorem \StRef{Dreduce} in the form of Section \StRef{Firstint} to find them. We
first determine the invariants $\hJ^i$ and $\cJ^a$ of the projected algebras $\Gamma_{H_1}$ and $\Gamma_{H_2}$ of $\Gamma_H$ in \EqRef{ExA2}. The algebra $\Gamma_{H_2}$ on $M_2=J^2(\real,\real)$ with coordinates $(x,v,v_x,v_{xx})$ is
$$
\Gamma_{H_2} = \rho_2(\Gamma_H)=\spn \{\, 	 \partial_v,\,     v\,\partial_v+v_x \partial_{v_x}+v_{xx} \partial_{v_{xx}} \, \}.
$$
The invariants $\hJ^1$ and $\hJ^2$ are easily found so that the projection $ \bfq_{\Gamma_{H_2}}:M_2 \to M_2/\Gamma_{H_2}$  in equation \EqRef{CDCP2} is (see also equation \EqRef{qLc}) 
\begin{equation}
\bfq_{\Gamma_{H_2}} =[ \ \hf^1 = \hJ^1=x, \ \hf^2= \hJ^2=v_{xx}v_x^{-1} ].
\EqTag{Ffs}
\end{equation}

By expressing the $\Gamma_{H_2}$ invariants $\hJ^1$ and $\hJ^2$ (pulled back to $M=M_1\times M_2$) in \EqRef{Ffs} in terms of the $\Gamma_H$ invariants  in equation \EqRef{bfpH} we arrive at
$$
\pi_2^*(\hJ^1)=x= h^1,\quad \pi_2^* (\hJ^2)=v_{xx}v_x^{-1}= h^5 +{\rm e}^{h^4}.
$$
This will allow us to determine the coordinate form for the map $\tbfp_{\Gamma_{H_2}}$ in equation \EqRef{CDCP2} as described in equation \EqRef{cfpL1}.  By using the coordinates $\hf^i$ in \EqRef{Ffs} on $M_2/\Gamma_{H_2}$ and those used in \EqRef{bfpH} on $N=(M_1\times M_2)/\Gamma_H$, we find $\tbfp_{\Gamma_{H_2}}:N\to M_2/\Gamma_{H_2}$ is
$$
\tbfp_{\Gamma_{H_2}}=[\ \hf^1=x,\ \hf^2 = V_x + e^V\ ].
$$
This gives the first integrals for $\hV$ (see equation \EqRef{infFI})
\begin{equation}
\hI^1=\tbfp_{\Gamma_{H_2}}^*( \hf^1)=x \quad {\rm and}\quad \hI^2 = \tbfp_{\Gamma_{H_2}}^* (\hf^2)=V_x + e^V .
\EqTag{B1infh}
\end{equation}
From \EqRef{BH1}, it is easily checked that $d\hI^a$ lies in $\hV$ . To check that $\hI^2=V_x+ e^V$ is an intermediate integral in the classical sense for the PDE system \EqRef{PDEH1}, we compute
$$
\frac{d\ }{dy} (V_x+ e^V)  = V_{xy} + e^V V_y = 0.
$$
Similarly, the first integrals for $\cV$  in \EqRef{BH1} are computed from the differential invariants 
$\cJ^1=y$ and $\cJ^2=v_{yy}v_y^{-1}$ of  $\Gamma_{H_1}$ on $J^2(\real,\real)$. Alternatively, we can use equation \EqRef{FIpb}  to produce the intermediate integrals from the cross-section of $\bfq_H$ in equation \EqRef{sigmaex1}, that is,
\begin{equation}
\cI^1=\sigma^*( \pi_1^* (\cJ^1))=y\ , \ {\rm and} \quad \cI^2=\sigma^*(\pi_1^*(\cJ^2)) =   W_y + e^W .
\EqTag{B1infc}
\end{equation}

We compute the intermediate integrals of $\CalI_1$ on $N_1$ in the same manner.  The projections  (equation \EqRef{AAS}) for the action of $\Gamma_{G_1}=\spn\{X_1,X_2,Z_1\}$ (equations \EqRef{ExA2} and \EqRef{defZ1}) are
\begin{equation}
\begin{aligned}
\Gamma_{G_1,1} & =\rho_1(\Gamma_{G_1})= \spn\{\  \partial_w,\ w \partial_w+w_y \partial_{w_y}+ w_{yy} \partial_{w_{yy}}\ \} \quad {\text{and}}\\
\Gamma_{G_1,2} & =\rho_2(\Gamma_{G_1})= \spn\{ \ \partial_v,\  v \partial_v+v_x \partial_{v_x}+ v_{xx} \partial_{v_{xx}} \ \}.
\end{aligned}
\EqTag{G1pro}
\end{equation}
The differential invariants of $\Gamma_{G_1,2}$ in \EqRef{G1pro} on $J^2(\real,\real)$ are easily computed so that equation \EqRef{qLc} yields $\bfq_{\Gamma_{G_1,2}}= [ \hf^1 = \hJ^1=x, \hf^2= \hJ^2=v_{xx}v_x^{-1} ]$. By expressing these invariants in terms of the $\Gamma_{G_1}$ invariants from \EqRef{Fp1} we have $\pi_2^*(\hJ^2)=D_x(g^3_1)$. With respect to the coordinates on $N_1$ used in \EqRef{QG1}, we find
\begin{equation*}
\tbfp_{\Gamma_{G_1,2}}=[\, \hf^1= x,\, \hf^2 = u_{1x}\, ].
\end{equation*}
This gives (equation \EqRef{infFI}) $\hI^1=x,\ \hI^2 =u_{1x}$ as a well-known pair of intermediate integrals for the wave equation. The intermediate integrals $\cI^1=y$ and $\cI^2=u_{1y}$ are similarly obtained from $\Gamma_{G_1,1}$.

We now use the Lie algebra of vector fields $\Gamma_{G_2}=\spn\{X_1,X_2,Z_2\}$  given by equations \EqRef{ExA2} and \EqRef{defZ2} to find the intermediate integrals of $\CalI_2$. In this case the algebra $\Gamma_{G_2}$ does {\it not} satisfy the transversality hypothesis in Theorem \StRef{Dreduce} and it is necessary to prolong $\Gamma_{G_2}$ to symmetry vector-fields of the prolongation of $\CalI$. The prolongation of $\Gamma_{G_2}$ (which we will again call $\Gamma_{G_2}$) to $J^3(\real,\real) \times J^3(\real,\real)$ is given by
\begin{equation}
\begin{aligned}
\Gamma_{G_2}  =\spn\{     \partial_w - \partial_v,   \     w\,\partial_w + w_y \partial_{w_y} + w_{yy} \partial_{w_{yy}}+ w_{yyy} \partial_{w_{yyy}}
+v\,\partial_v+v_x \partial_{v_x}+v_{xx} \partial_{v_{xx}}+v_{xxx} \partial_{v_{xxx}},\\
	w^2\partial_w \!+\! D_y(w^2) \partial_{w_y} \!+\! D_y^2(w^2)\partial_{w_{yy}}\!+\! D_y^3(w^2)\partial_{w_{yyy}}
\!-\!v^2\partial_v\! -\!D_x(v^2) \partial_{v_x}\! -\! D_x^2(v^2)\partial_{v_{xx}}\!-\!D_x^3(v^2)\partial_{v_{xxx}}\}.
\end{aligned}
\EqTag{prG2}
\end{equation}
This infinitesimal action satisfies the transversality hypothesis of Theorem \StRef{Dreduce} with respect to the standard contact structure on $J^3(\real,\real) \times J^3(\real,\real)$. Thus  $\CalI_2^{[1]}=(\CalK_1+\CalK_2)/\Gamma_{G_2}$ on $(J^3(\real,\real) \times J^3(\real,\real))/\Gamma_{G_2}$ is Darboux integrable.

The projections  $\Gamma_{G_2,a}=\rho_a(\Gamma_{G_2})$ of $\Gamma_{G_2}$ from equation \EqRef{prG2} to $J^3(\real,\real)$ are the prolongations of the standard action of ${\mathfrak s\mathfrak l}(2,\real)$ on the dependent variable; for example,
\begin{equation*}
\Gamma_{G_2,2}= \spn\{ \partial_v, v \partial_v+v_x \partial_v+ v_{xx} \partial_{v_{xx}}+ v_{xxx} \partial_{v_{xxx}}, v^2 \partial_v+ D_x(v^2) \partial_v+ D_x^2(v^2) \partial_{v_{xx}}+ D_x^3(v^2) \partial_{v_{xxx}} \}.
\end{equation*}
We now compute the intermediate integrals for $\CalI_2$. For the algebra $\Gamma_{G_2,2}$ acting on $J^3(\real,\real)$, two functionally independent  differential invariants  are well-known to be 
\begin{equation}
\hJ^1=x, \quad \hJ^2 = \frac{2v_xv_{xxx} -  3v_{xx}^2}{2v_x^2}.
\EqTag{Schw}
\end{equation}
In order to explicitly write the map $\tbfp_{\Gamma_{G_2,2}}: N_2 \to M_2/\Gamma_{G_2,2}$
we need to express the invariants in \EqRef{Schw} in terms of the invariants of $\Gamma_{G_2}$ on $J^3(\real,\real)\times J^3(\real,\real)$. The $\Gamma_{G_2}$ invariants we need are computed from the invariants in equation \EqRef{ptou2} to be
\begin{equation}
g^4_2 =  D_x\left(\log \frac{2v_xw_y}{(v+w)^2}\right)= \frac{v_{xx}}{v_x} - 2 \frac{v_x}{v+w},\quad g^5_2 =D_x(g^3_2)= \frac{v_{xxx}}{v_x} - \frac{v_{xx}^2}{v_x^2} -2 \frac{v_{xx}}{v+w}+ \frac{v_x^2}{(v+w)^2}.
\EqTag{G2invs}
\end{equation}
Therefore, from \EqRef{Schw} and \EqRef{G2invs}, we obtain $\pi_2^*(\hJ^2)={D_x}^2(g^3_2)-\frac{1}{2} (g^4_2)^2$. By equation \EqRef{cfpL1}, the map $\tbfp_{\Gamma_{G_2,2}}:N_2 \to M_2/\Gamma_{G_2,2}$ is then 
\begin{equation}
\tbfp_{\Gamma_{G_2,2}}= [\, \hf^1=x,\, \hf^2 = u_{2xx} - \frac{1}{2} u_{2x}^2\, ].
\EqTag{Ff3}
\end{equation}

Therefore by equation \EqRef{infFI}, $\hI^1=\tbfp_{\Gamma_{G_2,2}}^*(\hf^1)=x$ and $\hI^2=\tbfp_{\Gamma_{G_2,2}}^*(\hf^2)=u_{2xx} - \dfrac{1}{2}u_{2x}^2$ are two independent intermediate integrals for the Liouville equation $u_{2 xy} = \exp(u_2)$.  Again, the function $\hI^2$ can be
checked to be an intermediate integral in the traditional way by noting that
$\dfrac{d\ }{dy} (u_{2xx} - \dfrac{1}{2}u_{2x}^2)=0$, whenever $u_2$ is a solution to the Liouville equation.  Similarly, the independent intermediate integrals $\cI^1=y$ and $\cI^2=u_{2yy} - \dfrac{1}{2}u_{2y}^2$ are produced from the invariants of $\Gamma_{G_2,1}$.


These calculations underscore our earlier remarks that the intermediate integrals for a Darboux integrable PDE, where $\CalK_1$ and $\CalK_2$ are contact systems on jet space,  can {\bf always} be expressed in terms of classical differential invariants. In the case of the Liouville equation, the differential invariant  $\hJ^2$ in \EqRef{Schw} is just the Schwartzian derivative of $v$.  
\end{Example}

\begin{Example} \StTag{ExA2p} We compute the intermediate integrals for $\CalI_{3}$ and $\CalI_{4}$ from Example \StRef{ExA2} using equation \EqRef{infFI} in Section \StRef{Firstint}. We start with $\CalI_4$ and the symmetry algebra   $\Gamma_{G_4}=\spn\{X_1,X_2,Z_4\}$ given by equations \EqRef{ExA2} and \EqRef{defZ3Z4}. Again, as in the  case of $\Gamma_{G_2}$ in Example \StRef{ExA1p}, we need the prolongation of $\Gamma_{G_4}$ to $J^3(\real,\real)\times J^3(\real,\real)$ in order to satisfy the transversality condition in Theorem \StRef{Dreduce}.  One of the projections  (equation \EqRef{AAS})  of  $\Gamma_{G_4}$  to $J^3(\real,\real)$ is
$$
\Gamma_{G_4,2} = \spn \{\,  \partial_v, \,   v \partial_v+v_x \partial_{v_x}+ v_{xx} \partial_{v_{xx}}+ v_{xxx} \partial_{v_{xxx}},\, x\partial_v+\partial_{v_x}\, \}.
$$
The differential invariants of $\Gamma_{G_4,2}$ acting on $J^3(\real,\real)$ give  $\hf^1=\hJ^1=x$ and $\hf^2=\hJ^2= v_{xxx}(v_{xx})^{-1} $. Equation \EqRef{u3u4} and its derivatives then leads to
\begin{equation}
\hI^1=\bfp_{\Gamma_{G_4,2}}^* \hf^1=x, \quad \hI^2=\bfp_{\Gamma_{G_4,2}}^* \hf^2 = \frac{u_{4xx}}{u_{4x}}+\frac{1-2 u_{4x}}{u_4-x}.
\EqTag{Dinv}
\end{equation}
This produces, by Section \StRef{Firstint}, two independent first integrals for $\hV$. The $y$ analogues of the intermediate integrals in equation \EqRef{Dinv} are also intermediate integrals.

The  projection $\Gamma_{G_3,1} $ for the Lie algebra of vector fields $\Gamma_{G_3} = \spn\{ X_1,X_2,Z_3\}$ defined  by equations \EqRef{ExA2} and \EqRef{defZ3Z4}  does not satisfy the transversality condition for the rank  2 contact Pfaffian system $\CalK_1$ on $J^2(\real,\real)$. We therefore prolong the action of $\Gamma_{G_3}$  to $J^3(\real,\real)\times J^3(\real,\real)$ and again call  these vector fields $\Gamma_{G_3}$. 
The projections of this prolonged action to the factors of $J^3(\real,\real)\times J^3(\real,\real)$ are then
\begin{equation}
\begin{aligned}
\Gamma_{G_3,1} & = \spn\{\ \partial_w,\, w \partial_w+w_y \partial_{w_y}+ w_{yy} \partial_{w_{yy}}+ w_{yyy} \partial_{w_{yyy}},\, y \partial_w + \partial_{w_y}\   \} \quad {\text{and}} \\
\Gamma_{G_3,2} & = \spn\{\ \partial_v,\,  v \partial_v+v_x \partial_{v_x}+ v_{xx} \partial_{v_{xx}}+  v_{xxx} \partial_{v_{xxx}} \}.
\end{aligned}
\EqTag{prG3}
\end{equation}

The algebra $\Gamma_{G_3,2}$ in equation \EqRef{prG3} has 2-dimensional orbits on $J^3(\real,\real)$
and so, by Section \StRef{Firstint}, the singular system $\cV_3$ has 3  first independent first integrals.
For $\Gamma_{G_3,2}$ acting on $J^3(\real,\real)$ we obtain
$$
\bfq_{\Gamma_{G_3,2}} =[\hf^1=x,\   \hf_2= \frac{v_{xx}}{v_x}, \, \hf^3=\frac{v_{xxx}}{v_{xx}}].
$$
Combining this with equation \EqRef{u3u4}  and its derivatives, we find
\begin{equation*}
\hI^1=\bfp_{\Gamma_{G_3,2}}^* \hf^1=x,\  \hI^2=\bfp_{\Gamma_{G_3,2}}^* \hf^2= -\frac{u_{3x}}{u_3-x}, \  \hI^3=\bfp_{\Gamma_{G_3,2}}^*\hf^3=\frac{u_{3xx}}{u_{3x}} +\frac{1-2 u_{3x}}{u_3-x}.
\end{equation*}
These are the 3 independent first integrals  for $\hV$ (or intermediate integrals for the $u_3$ equation in \EqRef{ExA5}).

Since (the prolongation of) $\Gamma_{G_3,2}$ has 3-dimensional orbits on $J^3(\real,\real)$
we deduce, again by Section \StRef{Firstint}, that the singular system  $\cV_3$ for $\CalI_3^{[1]}$ 
has 2 independent first integrals. 
The $\Gamma_{G_3,2}$ invariants $\cf^1=\cJ^1=y$ and $\cf^2=\cJ^2=w_{yyy}(w_{yy})^{-1}$, when
expressed in
terms  of  $u_3$ and its derivatives from equation \EqRef{projeq}, yield the coordinate form of the map $\tbfp_{\Gamma_{G_3,1}}$ and produce the following intermediate integrals 
\begin{equation*}
\cI^1=\bfp_{\Gamma_{G_3,1}}^*\cf^1=y\ , \quad \cI^2=\bfp_{\Gamma_{G_3,1}}^* \cf^2 =\frac{u_{3yy}}{u_{3y}}-\frac{1}{y}
\end{equation*}
for the $u_3$ equation in \EqRef{ExA5}.



Finally, we point out how Theorem 9.1 and Corollary 9.2 in \cite{anderson-fels:2014a} 
apply  to Examples \StRef{ExA1p} and \StRef{ExA2p}. For the B\"acklund transformation $\CalB^{[1]}$  constructed between the pairs $\CalI_i^{[1]}$ given in equation \EqRef{projeq},
we observe that the systems $\CalI^{[1]}_i$  arising from diagonal quotients have strictly fewer independent intermediate integrals than those constructed from non-diagonal quotients.

Finally, we point out how Theorem 9.1 and Corollary 9.2 in \cite{anderson-fels:2014a} 
apply  to Examples \StRef{ExA1p} and \StRef{ExA2p}.

\end{Example}

\begin{Example}  \StTag{ExBp} In Example \StRef{ExB} we computed the reduction of the Pffaffian system $\CalI$ defined by the pair of Monge equations in equation \EqRef{MA2}. The reductions $\CalI_1,\CalI_2$ and $\CalB$ in equation \EqRef{ExB3} were computed for the  symmetry algebras $\Gamma_{G_1} =\spn \{\,X_1, Y_1,  Z_2\}$,  
	$\Gamma_{G_2} = \spn \{\,Z_1, Z_2,  Z_3\}$ and  $\Gamma_H =\spn  \Gamma_{G_1} \cap \Gamma_{G_2} = \{\,Z_1, \, Z_2 \, \}$, where the vector fields are given in \EqRef{GammaExB}. We now compute the intermediate integrals for the Darboux integrable systems $\CalI_1, \CalI_2$ and $\CalB$.


From equation \EqRef{GammaExB} we see that the projection  (equation \EqRef{AAS}) is
$$
\Gamma_{G_1,1} =\rho_1(\Gamma_{G_1})= \spn \{ \ \partial_v, \  s\partial_v + \partial_{v_s}\  \}.
$$ 
The independent invariants of $\Gamma_{G_1,1}$ on $M_1=[s,u,v,v_s,v_{ss}]$ are $\cJ^1=s, \cJ^2=u$ and $\cJ^3=v_{ss}$. The combination of these with equation \EqRef{bfqExB} 
produces the map $\tbfp_{\Gamma_{G_1,1}}: N_1 \to M_1/\Gamma_{G_1,1}$ (equation \EqRef{qLc}) as
\begin{equation}
\tbfp_{\Gamma_{G_1,1}}=[\ \cf^1 = x_2, \ \cf^2 = x_6, \  \cf^3 = x_4 \ ].
\EqTag{bfp31}
\end{equation}
For the Darboux integrable system $\CalI_1=\CalI/\Gamma_{G_1}$ equation \EqRef{infFI} in   Section \StRef{Firstint},  in conjunction with equation \EqRef{bfp31}, gives the first integrals $\cI^1= x_1, \cI^2= x_6$, and $\cI^3=x^4$ of $\cV$.
Similarly, one deduces that 
\begin{equation}
\hI^1=x_1,\quad \hI^2=x_5  \quad {\rm and}\quad  \hI^3=x_3 
\EqTag{IIex2I2}
\end{equation}
are first integrals of $\hV$.  Thus $\CalI_1$ is a rank 3 hyperbolic Pfaffian system for which
each characteristic system admits 3 independent first integrals. By a classical theorem of  Lie, 
	this implies that $\CalI_1$ is (contact) equivalent to  the wave equation. The change of variables in equation \EqRef{ExB8} was obtained using this fact.

For $\CalI_2= \CalI/\Gamma_{G_2}$ on $N_2=M/\Gamma_{G_2}$ we have the projected Lie algebra of vector fields $\Gamma_{G_2,1}=\spn \{\,  \partial_v, \,  s\partial_v + \partial_{v_s},  \,  \partial_u\,   \}$ so that  $\bfq_{\Gamma_{G_2,2}}=[\cf^1=\cJ^1=s,\cf^2= \cJ^2= v_{ss}]$.
Using $\bfq_{\Gamma_{G_2,2}}$ and \EqRef{bfqExB}, we then determine that the map $\tbfp_{\Gamma_{G_2,1}}:N_2 \to M_1/\Gamma_{G_2,1}$ is given by (equation \EqRef{qLc})
$$
\tbfp_{\Gamma_{G_2,1}}=[\ \cf^1  = x_2, \ \cf^2 = x_4 \ ] .
$$
A similar computation with $\Gamma_{G_2,2}$ (using equation \EqRef{QINVS}) 
 yields the intermediate integrals
\begin{equation}
\cI^1=x_2, \  \cI^2= x_4,\quad {\rm and } \quad  \hI^1= x_1,\  \hI^2= x_3.
\EqTag{DINVSB}
\end{equation}

In the particular case where $F(v_{ss})=v_{ss}^2$ and $G(z_{tt})=z_{tt}^2$ in equation \EqRef{HCeq}, the change of variable \EqRef{ExB10} produces from \EqRef{DINVSB} the intermediate integrals 
\begin{equation*}
\cI^1 = U_{XY}+\frac{1}{U_{YY}}\, ,  \ \cI^2 = U_Y-( U_{XY}+\frac{1}{U_{YY}})X,\  {\rm and} \quad 
\hI^1= U_{XY}-\frac{1}{U_{YY}}\, , \ \hI^2= U_Y-(U_{XY}-\frac{1}{U_{YY}})X
\end{equation*}
found on page 130 in \cite{goursat:1897a}. To check that these are intermediate integrals, we compute
$$
\left( \frac{d \ } {dy}+{U^2_{YY}}\frac{d\ }{dx}\right) \cI^a = 0 \quad {\rm and} \quad\left( \frac{d \ } {dy}-{U^2_{YY}}\frac{d\ }{dx}\right) \hI^a = 0.
$$


Finally, for the Darboux integrable system $\CalB= (\CalK_1+\CalK_2)/\Gamma_H$ given in \EqRef{ExB3}, the intermediate integrals are 
\begin{equation}
\cI^1=x_2,\ \cI^2=x_6,\ \cI^3=x_4 \quad {\rm  and} \quad \hI^1=x_1,\ \hI^2=x_5,\ \hI^3=x_3.
\EqTag{IIB2}
\end{equation}
\end{Example}

\begin{Example}\StTag{ExD2}  In this example we use equation \EqRef{infFI} in Section \StRef{Firstint} to determine the intermediate integrals for the $A_2$ Toda system $\CalI_2$ in Example \StRef{ExD}.  The intermediate integrals for $\CalB$ will be computed in Example \StRef{ExD3} using a different approach. The hypothesis in Theorem \StRef{Dreduce} holds for $\Gamma_{G_2}$ after one prolongation and so we let $M=J^{4,5}(\real,\real^2)\times J^{4,5}(\real,\real^2)$ and let $\CalI$ be the corresponding contact system. We remind the reader we are working in non-standard coordinates on each $J^{4,5}(\real,\real)$. See, for example, equation \EqRef{DxDyNC}. 

For the algebra $\Gamma_{G_2}$ (equation \EqRef{GammaSL3}) the projection $\Gamma_{G_2,2}$  (equation \EqRef{AAS}) 
onto $M_2=J^{4,5}(\real,\real^2)$ is the prolongation of
$$
\Gamma_{G_2,2} =\rho_2(\Gamma_{G_2})= {\rm pr} \spn \{ 
\partial_u ,  \partial_v, u\,\partial_u , v\,\partial_u, u\,\partial_v ,  v\,\partial_v,u^2\partial_u  + uv\,\partial_v  ,uv\,\partial_u + v^2\partial_v  \}.
$$
The 4 independent differential invariants of $\Gamma_{G_2,2}$ on $J^{4,5}(\real,\real)$ lead to (equation \EqRef{qLc})
\begin{equation}
\bfq_{\Gamma_{G_2,2}}=[
\hf^1= x, \, \hf^2=\frac{u_1^2v_4}{v_2}-\frac{4u_1^2v_3^2}{3v_2^2}+2\frac{u_3}{u_1}-3\frac{u_2^2}{u_1^2} ,\,  \hf^3 = D_x(\hf^2) , \ \
\hf^4= \frac{u_1^3v_5}{v_2}-5\frac{u_1^3v_4v_3}{v_2^2}+\frac{40u_1^3v_3^3}{9v_2^3}].
\EqTag{fs41}
\end{equation}

The projection map $\bfq_{\Gamma_{G_2}}:M \to N_2=M/\Gamma_{G_2}$ is given by $D_x$  differentiation of $U^1=g_2^1$ and $U^2=g_2^2$ in equation \EqRef{32}. We then have, using the coordinates in equation \EqRef{fs41}, that the projection map $\tbfp_{\Gamma_{G_2,2}}:N_2 \to M_2/\Gamma_{G_2,2}$ is given by
\begin{equation}
\begin{aligned}
\tbfp_{\Gamma_{G_2,2}}  & =[\,\hf^1 = x, \quad \hf^2 = U^1_{xx} + U^2_{xx} -(U^1_x)^2+ U^1_xU^2_x - (U^2_x)^2, \quad \hf^3= D_x(f_2), \\
\hf^4 & =3(U^2_{xxx}-U^1_{xxx}) +(6U^1_x+3U^2_x)U^1_{xx}-(3U^1_x+6U^2_x)U^2_{xx}+6U^1_x(U^2_x)^2-6U^2_x(U^1_{x})^2\, ].
\end{aligned}
\EqTag{DinvToda}
\end{equation}
The  vector field $D_x$ in equation \EqRef{DxDyNC} must be prolonged to compute $U^i_{xxx}$ in \EqRef{DinvToda}. The $y$ total derivative of the functions in equation \EqRef{DinvToda} vanish subject to the Toda equations \EqRef{95}, which verifies directly that they are intermediate integrals. Similar expressions for the first integrals of $\cV$ can be obtained from equation \EqRef{DinvToda} by changing $x$ to $y$. 

The system $\CalI_1$ in this example is the uncoupled wave and Liouville equation each of whose intermediate integrals were given in Example \StRef{ExA1p}.  
\end{Example}

\begin{Example}\StTag{ExE2} In this example we compute the intermediate integrals for $\CalI_2$ and $\CalI_1$ in Example \StRef{ExE}.  We need to prolong the action of $\Gamma_{G_2}$ for  Theorem \StRef{Dreduce} to apply. Therefore we work with the prolongation of $\Gamma_{G_2}$ in equations \EqRef{GammaExE} and \EqRef{G1ExE} to $M=J^3(\real,\real)\times J^2(\real^2,\real)$. The projected algebras from equation \EqRef{AAS} are
$$
\begin{aligned}
\Gamma_{G_2,1} &= \spn\{ \partial_w, w\partial_w+w_z\partial_{w_z}+w_{zz}\partial_{w_{zz}}+w_{zzz}\partial_{w_{zzz}},\\
&\qquad \qquad w^2 \partial_w+ 2ww_z\partial_{w_z}+2(w_z^2+ww_{zz})\partial_{w_{zz}}+(6w_zw_{zz}+2ww_{zzz})\partial_{w_{wzzz}} \} \quad {\rm and} \\
\Gamma_{G_2,2} &= \spn\{ 
\partial_v, v\partial_v+v_x\partial_{v_x}+v_{y}\partial_{v_{y}}+v_{xx}\partial_{v_{xx}}+v_{xy}\partial_{v_{xy}}+y_{yy}\partial_{v_{yy}}, \\ 
\qquad & v^2 \partial_v+2vv_x\partial_{v_x}+2vv_y\partial_{v_y} + 2(v_x^2+vv_{xx})\partial_{v_{xx}}+ 2(v_xv_y+vv_{xy})\partial_{v_{xy}}+2(v_y^2+vv_{yy})\partial_{v_{yy}}  \}.
\end{aligned}
$$
The differential invariants of $\Gamma_{G_2,2}$ on $J^2(\real^2,\real)$ lead to (see equation \EqRef{qLc}),
\begin{equation}
\bfq_{\Gamma_{G_2,2}}=[\,\hf^1=x,\, \hf^2=y,\, \hf^3 = \frac{v_y}{v_x},\, \hf^4=D_x(\hf^3)=\frac{v_xv_{xy}-v_{xx}v_y}{v_x^2},\, \hf^5=D_y(\hf^3)=\frac{v_xv_{yy}-v_{xy}v_y}{v_x^2}\, ].
\EqTag{fexe}
\end{equation}
The map $\bfq_{\Gamma_{G_2}}:M \to N_2$, obtained from
one prolongation of equation \EqRef{bfGg2}, is 
\begin{equation}
\begin{aligned}
\bfq_{\Gamma_{G_2}}& =[\ 
 x= x,\, y=y,\, z=z,\, u=g^4_2= \frac{w_{zz}}{w_z} + \frac{2w_z}{v-w}, \,
u_x= D_x(g^4_2), u_y= D_y(g^4_2),\\
 u_z&= D_z(g^4_2)=\frac{w_zw_{zzz}-w_{zz}^2}{w_z^2} + 2\frac{(v-w)w_{zz}+w_{z}^2}{(v-w)^2}, \,
u_{xx}={D_x}^2(g^4_2)= \frac{4w_zv_x^2-2(v-w)w_zv_{xx}}{(v-w)^3},  \\
u_{xy}&=D_x(D_y(g^4_2))=\frac{4w_zv_xv_{y}-2(v-w)w_zv_{xy}}{(v-w)^2},\,
u_{yy}={D_y}^2(g^4_2) = \frac{4w_zv_y^2-2(v-w)w_zv_{yy}}{(v-w)^2} \, ].
\end{aligned}
\EqTag{CExEG2}
\end{equation}
The map $\tbfp_{\Gamma_{G_2,2}}:N_2\to M_2/\Gamma_{G_2,2}$ is then given in coordinates used in \EqRef{fexe} and \EqRef{CExEG2} by
\begin{equation}
\tbfp_{\Gamma_{G_2,2}}=[\, \hf^1=x,\, \hf^2=y,\, \hf^3 = \frac{u_y}{u_x},\, \hf^4=D_x(\frac{u_y}{u_x}) , \hf^5=D_y(\frac{u_y}{u_x})\, ].
\EqTag{fsExE}
\end{equation}
Again, by equation \EqRef{infFI}, the functions appearing in \EqRef{fsExE} will be a complete set of first integrals for $\hV$. For example, $u_y(u_x)^{-1}$ is  easily checked to be an intermediate integral of equations \EqRef{OverDS} by computing
$$
\frac{d\ }{dz}( \frac{u_y}{u_x}) = \frac{u_{yz}u_x-u_y u_{xz}}{u_x^2}=0.
$$

To find the first integrals for $\cV$ we need the $\Gamma_{G_2,1}$ differential invariants on $J^3(\real,\real)$.  A maximal independent set are easily found to be
$$
\cJ^1=z \quad {\rm and} \quad  \cJ^2=\frac{2w_zw_{zzz} -  3w_{zz}^2}{2w_z^2},
$$
where the second invariant is the Schwartzian derivative of $w$. In terms of the coordinates on $M/\Gamma_{G_2}$ in equation \EqRef{CExEG2} these invariants lead to
$$
\bfp_{\Gamma_{G_2,2}}=[ \ \cf^1= z,  \ \cf^2 = u_z-\frac{1}{2}u^2 \ ].
$$
Again, we can check that this produces an intermediate integral for equations \EqRef{OverDS} by computing
$$
\frac{d\ }{dx}(u_z-\frac{1}{2}u^2)= u_{zx}-uu_x=0, \ {\rm and} \quad \frac{d\ }{dy}( u_z-\frac{1}{2}u^2)= u_{zy}-uu_y=0.
$$

Finally, we compute the intermediate integrals for $\CalI_1$. With $\Gamma_{G_1}$ given by equation \EqRef{G1ExE}  the projected algebras (equation \EqRef{AAS}) are
$$
\begin{aligned}
\Gamma_{G_1,1} &= \spn\{ \partial_w, w\partial_w+w_z\partial_{w_z}+w_{zz}\partial_{w_{zz}}+w_{zzz} \partial_{w_{zzz}} \} \quad {\rm and}\\
\Gamma_{G_1,2} &= \spn\{\partial_v,  v\partial_v+v_x\partial_{v_x}+v_{y}\partial_{v_{y}} +v_{xx} \partial_{v_{xx}} + v_{xy}\partial_{v_{xy}}+ v_{yy} \partial_{v_{yy}}\}.
\end{aligned}
$$
The $\Gamma_{G_1,2} $  lowest order invariants are
$$
\hJ^1= x,\quad \hJ^2=y \quad {\rm and} \quad \hJ^3= \log v_y-\log v_x
$$
which, when expressed in terms of the coordinates on $N_1=M/\Gamma_{G_1}$ in \EqRef{ODSbfp1}, give
$$
\bfp_{\Gamma_{G_1,2}}=[\ \hf^ 1=x,\ \hf^2=y,\ \hf^3 = P-Q\ ].
$$
The function $P-Q$ can be checked directly to be an intermediate integral of equations \EqRef{I1f} by noting that its  $z$ total derivative vanishes.

Similarly, the lowest order invariants of $\Gamma_{G_1,1}$ are
$$
\cJ^1= z\quad {\rm and} \quad \cJ^2=\frac{w_{zz}}{w_z}
$$
which, when expressed in terms of the coordinates in equation \EqRef{ODSbfp1} for  $M/\Gamma_{G_1}$, give
$$
\bfp_{\Gamma_{G_1,1}}=[\ \cf^1=z, \quad \cf^2 = P_z \ ].
$$
The function $P_z$ can easily be  checked to be an intermediate integral of equations \EqRef{I1f} by
noting that both its $x$ and $y$ total derivative vanish.  
\end{Example}

\subsection{The Vessiot Algebra}\StTag{CVA}

In this subsection we demonstrate Theorem \StRef{Rdiag}  and compute the Vessiot algebra and canonical quotient representation for the examples. For all 
the reductions  $\CalB=(\CalK_1+\CalK_2)/\Gamma_H$ and $\CalI_2=(\CalK_1+\CalK_2)/\Gamma_{G_2}$ the Lie algebras of vector-fields $\Gamma_H$ and $\Gamma_{G_2}$ 
are diagonal.  Therefore the Vessiot algebra for the Darboux integrable systems $\CalB$ and $\CalI_2$ are $\Gamma_H$ and $\Gamma_{G_2}$ respectively.

However in each of our examples the Lie algebra  $\Gamma_{G_1}$ (and $G_3$ from Example \StRef{ExA2p}) is {\bf not} a diagonal action and so  $\Gamma_{G_1}$ is not isomorphic to the Vessiot algebra for the Darboux integrable system $\CalI_1=(\CalK_1 +\CalK_2)/\Gamma_{G_1}$. Consequently we need to utilize Theorem \StRef{Rdiag} to find the local canonical quotient representation and Vessiot algebra for these cases. For the infinitesimal version of equation \EqRef{AAS} we define the ideals $\Gamma_{A_a} \subset \Gamma_{L_a}$ by
\begin{equation}
\Gamma_{A_1}=\rho_1(\ker \rho_{2} \cap \Gamma_{L}),\ {\rm and} \quad \Gamma_{A_2}=
\rho_2(\ker \rho_{1} \cap \Gamma_{L_1}).
\EqTag{GammaA}
\end{equation}
The quotient maps $\bfq_{\Gamma_{A_a}}:M_a \to M_a/\Gamma_{A_a}$ 
can then be used to define the Lie algebra of vector fields 
\begin{equation}
\widetilde \Gamma_{L,\diag} = (\bfq_{\Gamma_{A_1}}\times \bfq_{\Gamma_{A_2}})_*(\Gamma_{L})
\EqTag{redL}
\end{equation}
on $M_1/\Gamma_{A_1} \times M_2/\Gamma_{A_2}$. Then
\begin{equation}
\CalI= \left(   \CalK_1/\Gamma_{A_1}+\CalK_2/\Gamma_{A_2} \right) / \widetilde \Gamma_{L,\diag}
\EqTag{infqr}
\end{equation}
is the local canonical quotient representation of $\CalI/\Gamma_L$ and $\widetilde \Gamma_{L,\diag}$ is (isomorphic to) the Vessiot algebra of $\CalI/\Gamma_L$. 
As an abstract algebra $\widetilde \Gamma_{L,\diag}$ is isomorphic to  $\Gamma_{L}/(\Gamma_{A_1} + \Gamma_{A_2})$.

\begin{Example}\StTag{ExA2f}  We begin by demonstrating equation \EqRef{infqr} using Example \StRef{ExA2p} where $\CalI^{[1]}_3$ is the Darboux integrable  rank 3 Pfaffian system for the PDE $u_{3xy}=(u_3-x)^{-1}(u_{3x} u_{3y}) $ on the 7-manifold $N_3$. We have 
$$
\CalI^{[1]}_3= (\CalK_1 + \CalK_2 )/\Gamma_{G_3} \quad {\rm and}  \quad N_3 = (J^3(\real,\real)\times J^3(\real,\real)) /\Gamma_{G_3},
$$
where the Lie algebra $\Gamma_{G_3}$ on $M=J^3(\real,\real)\times J^3(\real,\real)$ is
\begin{equation}
\Gamma_{G_3} \!=\! \spn\{\,  \partial_w+\partial_v,\, y \partial_w + \partial_{w_y},\, w \partial_w+w_y \partial_{w_y}\!+ w_{yy} \partial_{w_{yy}}\! +  w_{yyy} \partial_{w_{yyy}} +
 v \partial_v+v_x \partial_{v_x}\!+ v_{xx} \partial_{v_{xx}}\!  + v_{xxx} \partial_{v_{xxx}} \}.
\EqTag{prG32}
\end{equation}
Since the projections $\rho_a(\Gamma_{G_3})$, $a=1,2$, are not isomorphic, $\Gamma_{G_3}$ is not a diagonal action. The subalgebras defined in \EqRef{GammaA} for $\Gamma_{G_3}$ are easily found to be
$$
\Gamma_{A_1} = \{\, y \partial_w + \partial_{w_y}\, \}, \quad \text{ and} \quad \Gamma_{A_2} = \{\, 0\, \}.
$$
The quotient map $\bfq_{\Gamma_{A_1}}: J^3(\real,\real) \to J^3(\real,\real)/\Gamma_{A_1}$ is
found, by  computing the $\Gamma_{A_1}$ invariants, to be
$$
\bfq_{\Gamma_{A_1}}=[\, y= y ,\, \tilde w=w-yw_y,\, \tilde w_y=w_{yy},\, \tilde w_{yy}=w_{yyy}\, ],
$$
from which we deduce that
\begin{equation}
\widetilde \CalK_1=\CalK_1/\Gamma_{A_1}= \langle \ d \tilde w - \tilde w_y dy\, , \ d\tilde w_y-\tilde w_{yy} dy\, , \ d\tilde w_{yy}\wedge dy \quad  \rangle_{\text{alg}}. 
\EqTag{QtG3}
\end{equation}
Note that $\widetilde \CalK_1$ is the contact system on $J^2(\real,\real)$. The map  $\bfq_{\Gamma_{A_1}}\times \bfq_{\Gamma_{A_2}}$ in \EqRef{redL} is given by
$$
\begin{aligned}
\bfq_{\Gamma_{A_1}}\times \bfq_{\Gamma_{A_2}}=[ & y= y ,\, \tilde w=w-yw_y,\, \tilde w_y=w_{yy},\, \tilde w_{yy}=w_{yyy},\\ & x=x,\, v=v,\, v_x=v_x,\, v_{xx}=v_{xx},\, v_{xxx}=v_{xxx}\, ].
\end{aligned}
$$
From this we can compute $(\bfq_{\Gamma_{A_1}}\times \bfq_{\Gamma_{A_2}})_*(\Gamma_{G_3})$ using
$$
\begin{aligned}
& (\bfq_{\Gamma_{A_1}}\times \bfq_{\Gamma_{A_2}})_*(\partial_w+\partial_v) =\partial_{\tilde w}+\partial_v \ , \quad (\bfq_{\Gamma_{A_1}}\times \bfq_{\Gamma_{A_2}})_*( y \partial_w + \partial_{w_y}) =0 \\
& (\bfq_{\Gamma_{A_1}}\times \bfq_{\Gamma_{A_2}})_*(w \partial_w+w_y \partial_{w_y}\!+ w_{yy} \partial_{w_{yy}}\! +  w_{yyy} \partial_{w_{yyy}} +
 v \partial_v+v_x \partial_{v_x}\!+ v_{xx} \partial_{v_{xx}}\!  + v_{xxx} \partial_{v_{xxx}}) \\
& \qquad = \partial_{\tilde w} + \tilde w_y \partial_{\tilde w_y} + \tilde w_{yy} \partial_{\tilde w_{yy}}+v\,\partial_v+v_x \partial_{v_x}+v_{xx} \partial_{v_{xx}} +v_{xxx}\partial_{v_{xxx}} .
\end{aligned}
$$
Thus for this example,  the algebra \EqRef{redL} is
\begin{equation}
\widetilde \Gamma_{G_3,\diag}=\spn\{\,  \partial_{\tilde w} + \partial_v, \, {\tilde w}\,\partial_{\tilde w} + \tilde w_y \partial_{\tilde w_y} + \tilde w_{yy} \partial_{\tilde w_{yy}}+v\,\partial_v+v_x \partial_{v_x}+v_{xx} \partial_{v_{xx}} +v_{xxx}\partial_{v_{xxx}} \, \} .
\EqTag{tG3}
\end{equation}
Using the data from equations \EqRef{QtG3}, \EqRef{tG3}, and the fact that $\CalK_2/\Gamma_{A_2}=\CalK_2$,  equation \EqRef{infqr}
produces the canonical quotient representation of $\CalI_3^{[1]}$ and shows that the Vessiot algebra of $\CalI_3^{[1]}$ is the 2-dimensional non-Abelian Lie algebra. 

In Examples \StRef{ExA2p} and \StRef{ExBp}  the quotient $\CalI^{[1]}_1=M/\Gamma_{G_1}$ is the rank 3 Pfaffian system  on a 7-manifold representing the wave equation. The reduction in equation \EqRef{infqr}  in both of these case gives the canonical quotient representation as
$$
\CalI^{[1]}_1 = (J^2(\real,\real)\times J^2(\real,\real), \widetilde \CalK_1 +\widetilde \CalK_2)/ \widetilde \Gamma_{G_1,\diag}
$$
where $\widetilde \CalK_a$ are the contact systems, and $ \widetilde \Gamma_{G_1,\diag} = \spn \{ \, \partial_{\tilde w} + \partial_{\tilde v} \, \}$ where $\tilde w$ and $\tilde v$ are the dependent variable. 
\end{Example}	

\begin{Remark}  The Vessiot algebra of $\CalB$ 
in Examples \StRef{ExA1} and \StRef{ExA2} is the 2-dimensional solvable Lie algebra $\Gamma_H$ used to construct $\CalB$. Theorem 7.5 in \cite{anderson-fels:2014a} shows that this is isomorphic to the Vessiot algebra of the prolongation $\CalB^{[1]}$ of $\CalB$.  Theorem  7.3 in  \cite{anderson-fels:2014a} implies that there is a monomorphism from the Vessiot algebra of $\CalB^{[1]}$  to each of the Vessiot algebras of the Darboux integrable systems  $ \CalI^{[1]}_2$, $\CalI_4^{[1]}$ and $\CalI_3^{[1]}$.  In the case of $\CalI_3^{[1]}$ this monomorphism is a isomorphism by dimensional reasons. This same theorem shows that the induced homomorphism from the Vessiot algebra of $\CalB^{[1]}$ to the Vessiot algebra of $\CalI_1^{[1]}$  is {\it not} injective.
\end{Remark}


\begin{Example}  Recall that the system $\CalI_1$ in Example \StRef{ExD} is the rank 6 Pfaffian system representation of the decoupled wave Liouville system in equation \EqRef{52}. 
We compute the Vessiot algebra for the (non-prolonged) Darboux integrable system $\CalI_1$ using the isomorphism $\widetilde \Gamma_{L,\diag} \cong \Gamma_{L}/(\Gamma_{A_1} + \Gamma_{A_2})$ for $\Gamma_L=\Gamma_{G_1}$. 

We begin by finding $\Gamma_{A_1} + \Gamma_{A_2}\subset \Gamma_{G_1}$  using $\Gamma_{G_1}$ in
equation \EqRef{GammaSL3}.   First we note that
$$
\ker \rho_2 = \spn \{\, X_1-Z_3= 2\partial_w, \, X_2-Z_4 =2 \partial_z\, \}, \quad \ker \rho_1=\spn \{\, X_1+Z_3=2\partial_u,\, X_2+Z_4 =2\partial_v \, \}.
$$
and therefore 
\begin{equation}
\Gamma_{A_1} + \Gamma_{A_2} = \spn \{ \, X_1, \, X_2, \, Z_3,\, Z_4 \, \}.
\EqTag{G1id}
\end{equation}
The Vessiot algebra is the computed using the algebra \EqRef{GammaSL3} and the ideal \EqRef{G1id} to be
$$
\Gamma_{G_1}/(\Gamma_{A_1} + \Gamma_{A_2}) \cong \real \oplus {\mathfrak s \mathfrak l}(2,\real).
$$
As expected, this algebra is just the  direct sum of the Vessiot algebra for the wave equation and the Vessiot algebra of the Liouville equation.
\end{Example}

\begin{Example} In this last example we compute the Vessiot algebra and the canonical quotient representation for the system $\CalI^{[1]}_1$ in Example \StRef{ExE2}. We find, from the algebra $\Gamma_{G_1}$ in \EqRef{G1ExE}, that
$$
\Gamma_{A_1}=\spn \{\, \partial_w \, \} \quad \text{and} \quad \Gamma_{A_2}=\spn \{ \, \partial_v \, \}.
$$
The quotient systems and manifolds in equation \EqRef{infqr}  are
\begin{equation}
\begin{aligned}
\CalK_1/\Gamma_{A_1} &=\langle dw_z-w_{zz}dz,dw_{zz}-w_{zzz}dz  \rangle_{\text{diff}} ,\  && M_1/\Gamma_{A_1}=(z,w_z,w_{zz},w_{zzz}),\\
\CalK_2/\Gamma_{A_2} &=\langle dv_x-v_{xx}dx-v_{xy}dy, dv_y-v_{xy}dx-v_{yy}dy \rangle_{\text{diff}} ,&& M_2/\Gamma_{A_2}=(x,y,v_x,v_y,v_{xx},v_{xy},v_{yy}),
\end{aligned}
\EqTag{CGE2}
\end{equation}
and the projected algebra  on $M_1/\Gamma_{A_1} \times M_2/\Gamma_{A_2}$ is
\begin{equation}
\widetilde \Gamma_{G_1,\diag} = \spn \{ w_z\partial_{w_z}+w_{zz}\partial_{w_{zz}} + w_{zzz}\partial_{w_{zzz}}  +v_x\partial_{v_x}+v_{y}\partial_{v_{y}}+v_{xx}\partial_{v_{xx}}+v_{xy}\partial_{v_{xy}}+y_{yy}\partial_{v_{yy}} \}.
\EqTag{RG1}
\end{equation}
Equation \EqRef{infqr}, applied to \EqRef{RG1} with \EqRef{CGE2},  provides the canonical quotient representation for $\CalI^{[1]}_1$. The Vessiot algebra is $\widetilde \Gamma_{G_1,\diag}$. It is interesting to note that $\CalK_2/\Gamma_{A_2}$ in equation \EqRef{CGE2} is 
the rank 2 Pfaffian system for the single PDE $ \alpha_y = \beta _x$ for two functions $(\alpha,\beta)$ of two variables $(x,y)$.
\end{Example}




\section{Maximally Compatible Integrable Extensions of Darboux Integrable Systems}\StTag{IEDI}

In this section we examine the relationship between the intermediate integrals of a Darboux integrable system $\CalI$ and an integrable extension $\CalE$ of $\CalI$. Ultimately we identify a condition on the extension, which we call {\deffont maximal compatibility}, that can be used to determine whether a given B\"acklund transformation between Darboux integrable systems can be constructed by Theorem A.

In Section \StRef{MA} we apply this theory to the case when  $\CalB$ is a B\"acklund transformation between two Darboux integrable systems $\CalI_a$ as considered in \cite{Clelland-Ivey:2009a}. 
By utilizing the double fibration in the B\"acklund transformation and applying Theorem \StRef{DIext} below to {\it both} $\CalI_1$ and $\CalI_2$ we are able to determine the intermediate integrals of $\CalB$. We then show that the prolongation of $\CalB$  is a maximally compatible extension of the prolongation of $\CalI_2$.


Our main theorem on integrable extensions is the following.

\begin{Theorem}
\StTag{DIext} 
	Let $\bfp : (\CalE, N) \to (\CalI, M)$ be an integrable extension with  $J$ an admissible subbundle of $T^*N$ for $(\CalE, \CalI)$.

\smallskip
\noindent
{\bf [i]}  If  $\CalI$ is decomposable of type $[p, \rho]$ with singular Pfaffian systems $\hV$ and $\cV$, 
	then $\CalE$ is decomposable of type $[p, \rho]$ with singular Pfaffian systems  
\begin{equation}
	\hZ = J \oplus \bfp^* ( \hV) \quad\text{and}\quad \cZ = J \oplus \bfp^* (\cV).
\EqTag{singWJV}
\end{equation}
{\bf [ii]}  If $\CalI$ is Darboux integrable and $(E^1)^\infty =0$,
	then $\CalE$ is Darboux integrable.
\end{Theorem}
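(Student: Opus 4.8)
The plan is to prove part {\bf [ii]} by directly verifying the two defining conditions of Darboux integrability (Definition \StRef{Intro2}) for the singular Pfaffian systems $\hZ$ and $\cZ$ of $\CalE$ produced in part {\bf [i]}. By part {\bf [i]}, $\CalE$ is already known to be decomposable of type $[p,\rho]$ with $\hZ = J \oplus \bfp^*(\hV)$ and $\cZ = J \oplus \bfp^*(\cV)$, so it remains only to check that $\hZ^\infty$ and $\cZ^\infty$ satisfy \EqRef{Intro7} and \EqRef{Intro8}. The first step is to identify these bundles of first integrals. Since $\bfp$ is a submersion and $\CalI$ is Darboux integrable, every first integral $f$ of $\hV$ on $M$ pulls back to a first integral $\bfp^*f$ of $\hZ$ on $N$ (because $d(\bfp^*f) = \bfp^*(df) \in \bfp^*\CalS(\hV) \subset \CalS(\hZ)$); thus $\bfp^*(\hV^\infty) \subseteq \hZ^\infty$, and similarly $\bfp^*(\cV^\infty) \subseteq \cZ^\infty$. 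The key claim I would establish is that these inclusions are in fact equalities, i.e.
\begin{equation*}
	\hZ^\infty = \bfp^*(\hV^\infty) \quad\text{and}\quad \cZ^\infty = \bfp^*(\cV^\infty).
\end{equation*}

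To prove the nontrivial inclusion $\hZ^\infty \subseteq \bfp^*(\hV^\infty)$, I would use the hypothesis $(E^1)^\infty = 0$ together with the structure of the admissible subbundle $J$. The idea: a first integral $f$ of $\hZ$ on $N$ satisfies $df \in \CalS(\hZ) = \CalS(J) \oplus \bfp^*\CalS(\hV)$. One must show $df$ is actually $\bfp$-basic, i.e. lies in $\bfp^*\CalS(\Lambda^1 M)$, and descends to a first integral of $\hV$. Differentiating $df=0$ and using the integrable-extension structure equation \EqRef{IntExt4} (which says $d\xi^u \equiv 0 \bmod \{\bfp^*\CalI,\xi^u\}$, so the $\xi^u$ generate a subbundle of $E'$, cf. \EqRef{IEP}) one controls the $J$-component of $df$; the condition $(E^1)^\infty = 0$ forces that component to vanish, because a nonzero $J$-component would produce, after the closure computation, a nonconstant first integral of $\CalE$ itself. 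Once $df$ is $\bfp$-basic, a standard descent argument (the fibres of $\bfp$ are connected, or one works locally) produces $\bar f$ on $M$ with $d\bar f \in \CalS(\hV)$, giving $df \in \bfp^*(\hV^\infty)$. The same argument applies verbatim to $\cZ$.

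With the identification of $\hZ^\infty$ and $\cZ^\infty$ in hand, the two Darboux conditions for $\CalE$ follow by pulling back those for $\CalI$ and adding the $J$ summand. For condition \EqRef{Intro7}: since $\hV + \cV^\infty = \cTM$ on $M$, applying $\bfp^*$ and using that $\ann(J)$ is transverse to the fibres of $\bfp$ (so $T^*N = J \oplus \bfp^*(T^*M)$ fibrewise), we get $\hZ + \cZ^\infty = (J \oplus \bfp^*\hV) + \bfp^*(\cV^\infty) = J \oplus (\bfp^*\hV + \bfp^*\cV^\infty) = J \oplus \bfp^*(\cTM) = T^*N$, and symmetrically $\cZ + \hZ^\infty = T^*N$. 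For condition \EqRef{Intro8}: $\hZ^\infty \cap \cZ^\infty = \bfp^*(\hV^\infty) \cap \bfp^*(\cV^\infty) = \bfp^*(\hV^\infty \cap \cV^\infty) = \bfp^*(\{0\}) = \{0\}$, using injectivity of $\bfp^*$ on forms and \EqRef{Intro8} for $\CalI$. This establishes that $\CalE$ is Darboux integrable.

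The main obstacle I anticipate is the key claim $\hZ^\infty = \bfp^*(\hV^\infty)$ — specifically the careful use of $(E^1)^\infty = 0$ to rule out any genuine $J$-contribution to a first integral of $\hZ$. One must rule out the possibility that some clever combination of an admissible 1-form $\xi^u$ and a pullback $\bfp^*\omega$ is closed (hence an exact first integral of $\hZ$) without each piece being separately well-behaved; the hypothesis $(E^1)^\infty=0$ is precisely what is designed to block this, but making the closure/descent bookkeeping rigorous (tracking everything modulo $\CalE$ and modulo $J$) is where the real work lies. Everything else is linear algebra on the cotangent bundle and formal pullback manipulations.
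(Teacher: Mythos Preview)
Your key claim $\hZ^\infty = \bfp^*(\hV^\infty)$ is false in general, and this breaks the argument. The paper itself furnishes a counterexample: in Example~\StRef{Ex61} the extension $\bfp_2:(\CalB,N)\to(\CalI_2,N_2)$ has $\rank\hV_2^\infty = 2$ (equation~\EqRef{DINVSB}) but $\rank\hZ^\infty = 3$ (equation~\EqRef{IIB2}); more broadly, the inequalities~\EqRef{infbds} explicitly allow $\rank\hZ^\infty$ to exceed $\rank\bfp^*(\hV^\infty)$ by as much as $\rank(\ker\bfp_*)$, and the whole notion of \emph{maximal compatibility} (Definition~\StRef{MaxCT}) is built around the case where this excess is realized. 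Your heuristic that a nonzero $J$-component of $df$ would force a first integral of $\CalE$ itself is where the reasoning slips: a first integral of $\hZ$ is not a first integral of $E^1$, so $(E^1)^\infty=0$ gives no leverage there.

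Fortunately the proof does not need that equality. For condition~\EqRef{Intro7} you only need the \emph{inclusion} $\bfp^*(\cV^\infty)\subseteq\cZ^\infty$, which you already proved: then
\[
\hZ + \cZ^\infty \ \supseteq\ \bigl(J\oplus\bfp^*(\hV)\bigr) + \bfp^*(\cV^\infty)
\ =\ J + \bfp^*(\hV+\cV^\infty) \ =\ J + \bfp^*(T^*M) \ =\ T^*N,
\]
and symmetrically for $\cZ+\hZ^\infty$. For condition~\EqRef{Intro8} the hypothesis $(E^1)^\infty=0$ enters directly and simply: since $\hV\cap\cV = I^1$ (the $\hsigma^a$ and $\csigma^\alpha$ being complementary), one has $\hZ\cap\cZ = J\oplus\bfp^*(I^1)=E^1$, hence $\hZ^\infty\cap\cZ^\infty\subseteq (E^1)^\infty=0$. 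Equivalently, the paper notes (just after Definition~\StRef{Intro2}, citing Theorem~4.6 of \cite{anderson-fels:2014a}) that when the system admits no first integrals, \EqRef{Intro7} alone suffices for Darboux integrability; that is precisely how $(E^1)^\infty=0$ is meant to be used here.
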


This produces the following useful corollary (see Example \StRef{ExD3}).

\begin{Corollary} \StTag{PBFI} Let $\bfp : (\CalE, N) \to (\CalI, M)$ be an integrable extension of a decomposable systems $\CalI$ and $\CalE$ where \EqRef{singWJV} holds. Then the intermediate integrals of $\CalI$ pullback to intermediate integrals of $\CalE$, that is, 
 if $f\in C^\infty(M)$ is a first integral of $\hV$ (or $\cV$) then $f\circ \bfp \in C^\infty(N)$ is a first integral of $\hZ$ (or $\cZ$).
\end{Corollary}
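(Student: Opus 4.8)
The plan is to reduce the Corollary directly to the structure of the singular Pfaffian systems given in \EqRef{singWJV} of Theorem \StRef{DIext}. Suppose $f\in C^\infty(M)$ is a first integral of $\hV$, so that $df\in \CalS(\hV)$ by definition. First I would pull back along $\bfp$: since $\bfp^*$ commutes with exterior differentiation, $d(f\circ\bfp)=\bfp^*(df)$, and because $df$ is a section of $\hV$ its pullback $\bfp^*(df)$ is a section of $\bfp^*(\hV)$. Now invoke the hypothesis that \EqRef{singWJV} holds, i.e. $\hZ=J\oplus\bfp^*(\hV)$; in particular $\bfp^*(\hV)\subset\hZ$ as subbundles of $T^*N$. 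Hence $d(f\circ\bfp)\in\CalS(\hZ)$, which is exactly the statement that $f\circ\bfp$ is a first integral of $\hZ$. The argument for $\cV$ and $\cZ$ is identical, replacing $\hV,\hZ$ by $\cV,\cZ$ and using the second half of \EqRef{singWJV}.

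The only point requiring a word of care is the passage ``$df$ a section of $\hV$'' $\Rightarrow$ ``$\bfp^*(df)$ a section of $\bfp^*(\hV)$.'' This is where $\bfp$ being a submersion matters: for a submersion the pullback of a constant‑rank subbundle $\hV\subset T^*M$ is the well‑defined constant‑rank subbundle $\bfp^*(\hV)\subset T^*N$ whose sections are spanned pointwise by $\bfp^*$ of sections of $\hV$, and if $\eta$ is a (local) section of $\hV$ then $\bfp^*\eta$ is by construction a section of $\bfp^*(\hV)$. Applying this with $\eta=df$ gives the claim. Note also that we do not even need the full integrable‑extension hypothesis here, nor Darboux integrability: the Corollary is a purely formal consequence of \EqRef{singWJV}, which is why it is stated for any decomposable $\CalI$ and $\CalE$ satisfying that relation rather than only for those coming from Theorem \StRef{DIext}\,{\bf[i]}.

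I expect essentially no obstacle; the ``hard part,'' such as it is, is simply being precise about what $\bfp^*(\hV)$ means and checking that $\bfp^*$ carries sections to sections, together with the trivial observation that $\bfp^*$ intertwines $d$. Everything else is a one‑line containment. If one wanted to, one could also phrase the conclusion in the language of intermediate integrals of Definition \StRef{DInv}: a function whose differential lies in $\CalS(\hZ|_U)$ or $\CalS(\cZ|_U)$ is by definition a local intermediate integral of $\CalE$, so the Corollary says precisely that $f\mapsto f\circ\bfp$ sends intermediate integrals of $\CalI$ to intermediate integrals of $\CalE$, as asserted.
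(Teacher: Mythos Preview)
Your proof is correct and matches the paper's approach: the paper states this corollary without proof immediately after Theorem \StRef{DIext}, treating it as a direct consequence of \EqRef{singWJV}, which is exactly the containment argument you give.
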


	The equations in  \EqRef{singWJV} imply that  $\hZ$ is an integrable extension of $\hV$ and $\cZ$ is
an integral extension of $\cV$.  Equations 2.8 in {\bf IE [v]} from \cite{anderson-fels:2014a}
refine this observation to give the following  bounds on the number of independent first integrals
	for the singular Pfaffian systems $\hZ$ and $\cZ$ for the extension $\CalE$ in terms of the corresponding integrals of $\hV$ and $\cV$,
\begin{equation}
\begin{aligned}
\rank \hV^\infty =	\rank(\bfp^*(\hV^\infty))  & \leq  \rank( \hZ^\infty)  \leq \rank (\bfp^*(\hV^\infty))    + \rank (\ker \bfp_*),\quad \text{and}\\
\rank \cV^\infty =	\rank(\bfp^*(\cV^\infty))  & \leq   \rank( \cZ^\infty)  \leq \rank (\bfp^*(\cV^\infty))    +\rank (\ker \bfp_*). 
\end{aligned}
\EqTag{infbds}
\end{equation}
The left-hand side of these inequalities follow from Corollary \StRef{PBFI}.  Note that if $J$ is an admissible subbundle for the extension $\CalE$, then $\rank (\ker \bfp_*)= \rank(J)$.

Theorem 6.5 in \cite{anderson-fels:2014a} shows that if $(\CalK_1+\CalK_2)/G_{\diag} = \CalI$ is the
canonical quotient representation of a Darboux integrable system $\CalI$ and $H \subset G$ a subgroup, then the intermediate integrals for the extension $\CalE = (\CalK_1+\CalK_2)/H_{\diag}$ of $\CalI$ satisfy the maximum conditions in equation \EqRef{infbds}. Conversely, the requirement that the space of intermediate integrals for the extension $\CalE$ achieve the maximums in equation \EqRef{infbds} provide a sufficiency test
which guarantee that an extension arises locally by a group quotient (see Theorem 8.2 in \cite{anderson-fels:2014a}). This leads to the following definition.

\begin{Definition}\StTag{MaxCT}
	Let   $\bfp\:  (\CalE, N) \! \to\! (\CalI, M)$ be an integrable extension of Darboux integrable  systems  with singular Pfaffian systems satisfying \EqRef{singWJV}. The extension is called {\deffont maximally compatible} if 
\begin{equation*}
\begin{aligned}
& {\rm {\bf [i]}} & \ker( \bfp_*) \cap \ann(\hZ^\infty) =0 ,\quad & \quad  \ker (\bfp_*) \cap \ann(\cZ^\infty) = 0 \quad {\text \rm and} \\
& {\rm {\bf [ii]}} &  \rank (\hZ^\infty) = \rank (\ker \bfp_*) + \rank (\hV^\infty),  \quad &\quad \rank (\cZ^\infty) = \rank (\ker(\bfp_*)) + \rank (\cV^\infty). &
\end{aligned}
\end{equation*}
\end{Definition}
Condition {\bf [ii]} implies that the values $\rank( \hZ^\infty)$ and $\rank( \cZ^\infty)$ in equation \EqRef{infbds} are maximal. A more general definition  of maximal compatibility when $\CalI$ is not Darboux integrable is given by Definition 5.2 in \cite{anderson-fels:2014a}. The equivalence of the two definitions for Darboux integrable systems is proved in Theorem 5.3 in \cite{anderson-fels:2014a}.

An important consequence of maximal compatibility is given by the following corollary (see Corollary 7.4 in \cite{anderson-fels:2014a}).

\begin{Corollary}\StTag{MaxCT2} Let   $\bfp\:  (\CalE, N) \to (\CalI, M)$ be an integrable extension of  Darboux integrable  systems which is maximally compatible. Then there exists a monomorphism  
\begin{equation}
\psi:\vess(\CalE)\to\vess(\CalI)
\EqTag{Mpsi}
\end{equation}
from the Vessiot algebra of $\CalE$ to that of $\CalI$. 
\end{Corollary}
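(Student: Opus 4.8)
The plan is to reduce the statement to an assertion about subgroups by placing both systems into canonical quotient form over a \emph{common} covering, after which $\psi$ is simply the differential of a group inclusion. First I would place $\CalI$ in its canonical quotient representation $\CalI=(\CalK_1+\CalK_2)/G_{\diag}$ furnished by Theorem \StRef{Dreduce} and its local converse, so that by definition $\vess(\CalI)=\lieg$ is the Lie algebra of the Vessiot group $G$ and \EqRef{VAdim} holds. The entire argument then hinges on producing a compatible quotient representation of the extension $\CalE$ using a \emph{subgroup} of this same $G$.

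The central step is to invoke the converse half of the maximal-compatibility theory, namely the sufficiency test recorded just before Definition \StRef{MaxCT} (Theorem 8.2 of \cite{anderson-fels:2014a}). Because $\bfp\colon(\CalE,N)\to(\CalI,M)$ is an integrable extension of Darboux integrable systems whose singular Pfaffian systems satisfy \EqRef{singWJV} and whose first-integral bundles $\hZ^\infty$, $\cZ^\infty$ attain the upper bounds of \EqRef{infbds} (which is exactly what conditions {\bf[i]} and {\bf[ii]} of Definition \StRef{MaxCT} assert), that theorem guarantees that $\CalE$ arises locally as a group quotient of the \emph{same} covering: there is a subgroup $H\subset G$ and an identification $\CalE\cong(\CalK_1+\CalK_2)/H_{\diag}$ under which $\bfp$ becomes the natural projection induced by $H\subset G$. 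Condition {\bf[i]} is what forces the fibre directions $\ker(\bfp_*)$ to be cut out by honest first integrals, which is precisely what is needed for the recovered quotient representation of $\CalE$ to be canonical rather than to conceal further reducible directions.

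To see that $H_{\diag}$ realizes the \emph{canonical} quotient representation of $\CalE$, so that $\vess(\CalE)=\lieh$, I would verify the dimension bookkeeping demanded by \EqRef{VAdim}. Using condition {\bf[ii]} together with $\rank(\ker\bfp_*)=\dim N-\dim M$ one finds
\begin{align*}
\dim\vess(\CalE)
&=\dim N-\rank(\hZ^\infty)-\rank(\cZ^\infty)\\
&=\dim N-2\rank(\ker\bfp_*)-\rank(\hV^\infty)-\rank(\cV^\infty)\\
&=\dim\vess(\CalI)-\rank(\ker\bfp_*),
\end{align*}
which equals $\dim G-\dim H=\dim N-\dim M$, confirming that $H$ has exactly the right dimension to be the Vessiot group of $\CalE$. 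With $\vess(\CalE)=\lieh$ and $\vess(\CalI)=\lieg$ thus identified, the inclusion $H\subset G$ differentiates to an injective Lie algebra homomorphism $\psi\colon\lieh\hookrightarrow\lieg$, which is the monomorphism \EqRef{Mpsi}.

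The hard part is the second paragraph: extracting the subgroup $H$ and the quotient identification of $\CalE$ over the same covering as $\CalI$. This is the genuine content (Theorem 8.2 of \cite{anderson-fels:2014a}), because one must reconstruct the common covering $\CalK_1+\CalK_2$ and its $G$-action from data intrinsic to $\CalE$ and $\CalI$: the covering appears as the simultaneous local completion of the foliations by first integrals of $\hZ$ and $\cZ$, and maximal compatibility is exactly the condition ensuring that this completion for $\CalE$ refines the one for $\CalI$ only along the fibre, so that the resulting structure group contains $H$ as a subgroup of $G$. Once this local quotient picture is secured, the dimension count and the passage to Lie algebras are routine.
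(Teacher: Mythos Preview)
The paper does not supply its own proof of this corollary; it simply cites Corollary 7.4 of \cite{anderson-fels:2014a}. Your route---placing $\CalI$ in canonical quotient form $(\CalK_1+\CalK_2)/G_{\diag}$ and then invoking Theorem 8.2 of the same reference to realize $\CalE$ locally as $(\CalK_1+\CalK_2)/H_{\diag}$ for some subgroup $H\subset G$, so that $\psi$ is the differential of the inclusion $H\hookrightarrow G$---is the natural argument and is consistent with how the companion paper organizes these results.

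One small slip in your dimension bookkeeping: you correctly compute $\dim\vess(\CalE)=\dim\vess(\CalI)-\rank(\ker\bfp_*)$ and then write ``which equals $\dim G-\dim H=\dim N-\dim M$''. It does not. Since $\rank(\ker\bfp_*)=\dim N-\dim M=\dim G-\dim H$, the expression equals $\dim G-(\dim G-\dim H)=\dim H$, which is of course exactly what you want in order to identify $H$ as the Vessiot group of $\CalE$. In any case, once Theorem 8.2 delivers $\CalE\cong(\CalK_1+\CalK_2)/H_{\diag}$ as a \emph{diagonal} quotient, $\vess(\CalE)=\lieh$ holds by definition of the canonical quotient representation, so the dimension count is a consistency check rather than an independent step in the argument.
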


\begin{Example}\StTag{Ex61} We demonstrate equation \EqRef{singWJV} in part {\bf [i]} of Theorem \StRef{DIext} for Example \StRef{ExB} and examine the condition of maximal compatibility. Starting from diagram \EqRef{ExB3} we see that
\begin{equation}
B=I_2/\Gamma_H =\spn\{ \beta^a \} =  \bfp_2^* I_2+ \spn \{\beta^2= dx_6-Fdx_2 \}.
\EqTag{PBEB}
\end{equation}
The singular Pfaffian systems for $\CalI_2$ are computed from equations \EqRef{CofEB} and \EqRef{SeEB} to be
\begin{equation}
\hV_2 =I_2+\spn\{\, dx_1-\frac{(x_1+x_2)G''}{F'+G'} dx_3,\, d x_3\, \}, \,  \ 
\cV_2 = I_2+\spn \{\, dx_2-\frac{(x_1+x_2)F''}{F'+G'} dx_4,\, d x_4 \, \}.
\EqTag{SI2EB}
\end{equation}
The singular Pfaffian systems $\hZ$ and $\cZ$ for $\CalB$ are determined from equation \EqRef{SEB2} to be
\begin{equation}
\hZ= B+\spn\{\, dx_1,\, dx_2\, \} \quad {\rm and} \quad \hZ= B+\spn\{\, dx_2,\, dx_4\, \}.
\EqTag{SBEB}
\end{equation}
By virtue of equations \EqRef{PBEB} and \EqRef{SI2EB} the singular systems \EqRef{SBEB} are easily seen to satisfy
$$
\hZ = \bfp^*_2 \hV_ + \spn\{\, dx_6-Fdx_2\, \} \quad {\rm and } \quad \cZ = \bfp^*_2 \cV_2 + \spn\{\, dx_6-Fdx_2\, \}.
$$
This demonstrates equation \EqRef{singWJV} in part {\bf [i]} of Theorem \StRef{DIext} with $J=\spn\{ dx_6-Fdx_2 \}$.

The extension $\CalB \to \CalI_2$ satisfies the conditions of Definition \StRef{MaxCT}. 
Condition {\bf [ii]} follows immediately from equations \EqRef{DINVSB} and \EqRef{IIB2}. To check
condition {\bf [i]}, we use
the map $\bfp_2$ in diagram \EqRef{ExB3} to find
$$
\ker(\bfp_{2*})= \spn\{ \, \partial_{x_6} - \partial _{x_7} \, \}.
$$
This, along with the formula for $\hZ^\infty$ and $\cZ^\infty$ in equation \EqRef{IIB2}, shows that condition {\bf [i]} holds.

On account of equations \EqRef{IIex2I2} and \EqRef{IIB2},
the extension $\bfp_1:(\CalB,N) \to (\CalI_1,N_1)$ does {\it not} satisfy condition {\bf [ii]} in Definition \StRef{MaxCT}  and is therefore not maximally compatible.
\end{Example}


\begin{Example} \StTag{ExD3}  We demonstrate Corollary \StRef{PBFI} using the B\"acklund transformation in Example \StRef{ExD}. The map $\bfp_1 :(\CalB,N) \to (\CalI_1,N_1)$ is determined by repeated total differentiation of the  formulas 
\begin{equation}
V^1 = \log(2W^1_yW^2_x),   
	\quad
	V^2 =  2W^2-2W^1+\log(\frac{W^1_y}{W^2_x})
\EqTag{p1Exs6}
\end{equation}
from equation \EqRef{55}.
The pullback using $\bfp_1$ (equation \EqRef{p1Exs6} and its derivatives) of the 
intermediate integrals of $\CalI_1$ (see Example \StRef{ExA1p})  produces the following independent intermediate integrals for $\CalB$
\begin{equation}
\begin{aligned}
\bfp_1^*(x) & = x \ , \quad
\bfp_1^* (V^2_x)   =W^2_x-2W^1_x-{\rm e}^{W^1}  - \frac{W^2_{xx}}{W^2_x} , \\
\bfp_1^* \left(V^1_{xx} -\frac{1}{2}(V^1_x)^2\right) & =
\frac{W^2_{xxx}}{W^2_x}+{\rm e}^{W^1}(\frac{W^2_{xx}}{W^2_x}-W^1_x-W^2_x)-\frac{3}{2}\left(\frac{W^2_{xx}}{W^2_x}\right)^2-\frac{1}{2}{\rm e}^{2W^1}-\frac{1}{2}(W^2_x)^2.
\end{aligned}
\EqTag{pbfi6}
\end{equation}
These, along with $D_x(\bfp_1^*(V^2_x))$, produce a complete set of independent first integrals for the singular system $\hZ$ for the rank 8 Pfaffian system $\CalB$ determined by equations \EqRef{53}. The functions in \EqRef{pbfi6} can be checked directly to be intermediate integrals  of the PDE system \EqRef{53} by showing that their $y$ total derivatives vanishes. 

For the extension $\bfp_1:(\CalB, N) \to (\CalI_1, N_1)$  we have $\rank(\hV^\infty_1)=\rank(\hZ^\infty)=4$. The extension is not maximally compatible since {\bf [ii]} in Definition \StRef{MaxCT}  is not satisfied.
\end{Example}

\begin{Example}\StTag{Ex63}  All the integrable extensions $\bfp_2:(\CalB,N) \to (\CalI_2,N_2)$ constructed in the examples of Section \StRef{Examples} have the property the $\Gamma_H \subset \Gamma_{G_2}$ and $\Gamma_{G_2}$ 
 is a diagonal action. Therefore, on account of Theorem 6.5 in \cite{anderson-fels:2014a},  these
 extensions are all maximally compatible.  The homomorphism in Corollary \StRef{MaxCT2}  is just the inclusion map induced by $\Gamma_H \subset \Gamma_{G_2}$.
\end{Example}

\begin{Example} In Example \StRef{ExC} we constructed a sequence of B\"acklund transformations which lead to diagram \EqRef{SeqBT}.  Again, because the actions used to construct  each of the integrable extensions $ \bfp_i :\CalB_i \to \CalG_i$ are diagonal, the extensions are maximally compatible. For the integrable extensions $\bar \bfp_i : \CalB_i \to \CalG_{i-1}$ in diagram \EqRef{SeqBT}, the left inequality in equation \EqRef{infbds} is an equality and these extensions are {\deffont minimally compatible}.
\end{Example}

\section{B\"acklund Transformations between Monge-Amp\`ere systems in the plane and the wave equation}\StTag{MA}


In Section 6 of \cite{anderson-fels:2012a} we showed how Theorem A can be used to construct
B\"acklund transformations with 1-dimensional fibres  between a Monge-Amp\`ere Darboux integrable system whose Vessiot algebra is not ${\mathfrak s \mathfrak o}(3,\real)$, and the Monge-Amp\`ere system for the wave equation. In Section \StRef{MAS} we now show
the converse of this is true by showing that the maximal compatibility conditions in Definition \StRef{MaxCT} are satisfied for such B\"acklund transformations. Therefore we find that all B\"acklund transformations between Monge-Amp\`ere Darboux integrable  systems and the wave equation having 1-dimensional fibres arise through the group quotient process in Theorem A.  See Corollary 9.3 in \cite{anderson-fels:2014a}. 
  
  As our final application we show that if a Darboux integrable Monge-Amp\`ere system has Vessiot algebra  ${\mathfrak s \mathfrak o}(3,\real)$, then no B\"acklund transformation with the wave equation having 1-dimensional fibres exists. The reason for the non-existence of such a transformation hinges on Corollary \StRef{MaxCT2} and the simple fact that  ${\mathfrak s \mathfrak o}(3,\real)$ has no 2-dimensional subalgebras. A specific example of a partial differential equation with  ${\mathfrak s \mathfrak o}(3,\real)$ Vessiot algebra is given.

\subsection{B\"acklund Transformations for Monge-Amp\`ere Systems}\StTag{MAS}

Let $\CalI_2$ be a hyperbolic Monge-Amp\`ere system on a $5$-manifold $N_2$ (see \cite{bryant-griffiths-hsu:1995a} and \cite{Clelland-Ivey:2009a}). Then $\CalI_2$ is an $s=1$ hyperbolic EDS  (see Section \StRef{DefDI}).  The prolongation $\CalI_2^{[1]}$ of $\CalI_2$ to the space of regular 2-dimensional integral elements of $\CalI_2$ is an $s=3$ hyperbolic system \cite{bryant-griffiths-hsu:1995a}. We denote the singular Pfaffian systems of $\CalI_2$ by $\hV_2$ and $\cV_2$,  and let $\hV_{2,[1]}$ and $\cV_{2,[1]}$ denote the singular Pfaffian systems for $\CalI^{[1]}_2$. 

In this section we will assume that $\CalI_2^{[1]}$ is Darboux integrable but not Monge integrable \cite{Clelland-Ivey:2009a}. This is equivalent to  the following rank hypotheses 
\begin{equation}
 \rank( \hV_2^\infty ) = 1  , \quad \rank ( \cV_2^\infty )= 1, \quad {\text{and}}\quad 
 	\rank (\hV^{\infty}_{2,[1]}) = 2 , \quad  \rank (\cV^{ \infty}_{2,[1]} ) = 2
\EqTag{RV1}
\end{equation}
on the singular systems.
Equation \EqRef{VAdim}  shows  that  the dimension of the Vessiot algebra $\vess(\CalI_2^{[1]})$ for the Darboux integrable system $\CalI^{[1]}_2$ is 3.

Next let $\CalI_1$ be the standard Monge-Amp\`ere hyperbolic system on the 5-manifold $N_1$ for the wave equation $u_{xy}=0$.  Then $\CalI_1$ is Darboux integrable (without prolongation) 
	and the singular Pfaffian systems $\hV_1$ and $\cV_1$ satisfy
\begin{equation}
	  \rank(\hV^\infty_1) = 2  \quad\text{and}\quad  \rank(\cV^\infty_1) = 2 .
\EqTag{VdimWave}
\end{equation}
	Equations \EqRef{VAdim} and \EqRef{VdimWave} show that the Vessiot algebra  for the Darboux integrable system $\CalI_1$ (and hence the wave equation) has dimension 1.

Suppose that $\CalB$ is a B\"acklund transformation relating $\CalI_1$ and $\CalI_2$
where the fibres of $\bfp_1:N \to N_1$ and $\bfp_2:N \to N_2$ each have dimension 1. {\it Our goal
in this section is to prove that $\CalB$ is Darboux integrable and that the Vessiot algebra $\vess(\CalB)$ can be identified with a subalgebra of the Vessiot algebra $\vess(\CalI^{[1]}_2)$ of the Darboux integrable system $\CalI^{[1]}_2$.}  The key to proving this is to first show that the prolongation $\CalB^{[1]}$ is a maximally compatible extension of $\CalI_2^{[1]}$. We then apply Corollary \StRef{MaxCT2}.

We begin by proving that the B\"acklund transformation $\CalB$ has the following properties.

\begin{Lemma} \StTag{UMA}  	Let $\bfp_a:(\CalB, N)\to (\CalI_a,N_a)$ be a B\"acklund transformation, with 1-dimensional fibers, between  a hyperbolic 
	Monge-Amp\`ere system $(\CalI_2, N_2)$ satisfying  \EqRef{RV1}
	 and the Monge-Amp\`ere  system $(\CalI_1, N_1)$ for the wave equation. 
	
\smallskip
\noindent
{\bf [i]} The differential system $\CalB$ is an $s=2$ hyperbolic system with  singular Pfaffian systems $\hW$ and $\cW$ which are unique up to interchange.

\smallskip
\noindent
{\bf [ii]} The spaces of first integrals of$\hW$ and $\cW$ satisfy $\hW^\infty = \bfp_1^*(\hV_1^\infty)$ and $\cW^\infty = \bfp_1^*(\cV_1^\infty)$.

\smallskip
\noindent
{\bf [iii]} The differential system $\CalB$ is Darboux integrable.

\smallskip
\noindent
{\bf [iv]} The Vessiot algebra $\vess(\CalB)$ is  2-dimensional.

\end{Lemma}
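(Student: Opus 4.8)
The plan is to work entirely through the double fibration $\CalI_1 \xleftarrow{\bfp_1} \CalB \xrightarrow{\bfp_2} \CalI_2$ and extract dimension counts from the hypotheses \EqRef{RV1} and \EqRef{VdimWave}, applying Theorem \StRef{DIext} and the inequalities \EqRef{infbds} to \emph{both} legs. First I would establish part \textbf{[i]}: since $\CalI_1$ is an $s=1$ Monge-Amp\`ere system on the $5$-manifold $N_1$ and $\bfp_1$ has $1$-dimensional fibres, $N$ is $6$-dimensional, so $\CalB$ lives on an $(s+4)$-manifold with $s=2$. Because $\bfp_1:(\CalB,N)\to(\CalI_1,N_1)$ is an integrable extension and $\CalI_1$ is decomposable of type $[2,2]$ with singular systems $\hV_1,\cV_1$, Theorem \StRef{DIext}\textbf{[i]} gives that $\CalB$ is decomposable of type $[2,2]$, i.e.\ $s=2$ hyperbolic, with singular Pfaffian systems $\hW = J_1\oplus\bfp_1^*\hV_1$ and $\cW = J_1\oplus\bfp_1^*\cV_1$ where $J_1$ is the rank-$1$ admissible subbundle. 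Uniqueness up to interchange follows from the standard fact (cited just before Definition \StRef{Singsys}, from \cite{anderson-fels-vassiliou:2009a}) that for a class-$s$ hyperbolic system the hyperbolic decomposition is unique up to swapping the two singular systems.

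For part \textbf{[ii]} I would combine the two legs. From the $\CalI_1$-side, Corollary \StRef{PBFI} plus the left inequality in \EqRef{infbds} give $\rank(\hW^\infty)\ge \rank(\bfp_1^*\hV_1^\infty)=2$ and likewise $\rank(\cW^\infty)\ge 2$; the right inequality gives $\rank(\hW^\infty)\le 2+\rank(\ker\bfp_{1*})=3$, and similarly for $\cW$. To pin these down to exactly $2$ I would use the $\CalI_2$-side together with the disjointness condition \EqRef{Intro8} that must hold for $\CalB$ to be Darboux integrable. Specifically, apply Theorem \StRef{DIext}\textbf{[i]} to $\bfp_2$: the singular systems of $\CalB$ must also be expressible as $J_2\oplus\bfp_2^*\hV_2$ (up to interchange, matching $\hW$ or $\cW$ appropriately), and the first-integral bounds \EqRef{infbds} applied to $\bfp_2$ give $\rank(\hW^\infty)\le \rank(\bfp_2^*\hV_2^\infty)+1 = 1+1 = 2$ using $\rank(\hV_2^\infty)=1$ from \EqRef{RV1}. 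Hence $\rank(\hW^\infty)=2=\rank(\cW^\infty)$, and since $\bfp_1^*\hV_1^\infty\subseteq\hW^\infty$ has rank $2$, equality of ranks forces $\hW^\infty=\bfp_1^*\hV_1^\infty$ and $\cW^\infty=\bfp_1^*\cV_1^\infty$. (One must check the matching of which singular system of $\CalB$ goes with which $\CalI_2$-singular system; this is where the interchange ambiguity in \textbf{[i]} is used, and consistency of the two representations $J_1\oplus\bfp_1^*\hV_1 = J_2\oplus\bfp_2^*\hV_2$ as subbundles of $T^*N$ needs a short argument comparing ranks and the fact that both contain $\CalB$'s $1$-forms.)

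Part \textbf{[iii]} then follows from Theorem \StRef{DIext}\textbf{[ii]}: $\CalI_1$ is Darboux integrable, and $(B^1)^\infty=0$ must be verified — here the hyperbolic Pfaffian structure of $\CalB$ (it is an $s=2$ system, so $B=B^1$ is the rank-$2$ Pfaffian part) together with the observation that any first integral of $\CalB$ would push down via $\bfp_1$ to a first integral of $\CalI_1$, which is Darboux integrable hence has $(I_1^1)^\infty = 0$ — so $(B^1)^\infty=0$ and Theorem \StRef{DIext}\textbf{[ii]} applies. Finally, part \textbf{[iv]}: by \EqRef{VAdim}, $\dim\vess(\CalB) = \dim N - \rank(\hW^\infty) - \rank(\cW^\infty) = 6 - 2 - 2 = 2$. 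The main obstacle I anticipate is part \textbf{[ii]} — specifically, justifying rigorously that the two decompositions of $\CalB$ coming from the two legs $\bfp_1$ and $\bfp_2$ are the \emph{same} hyperbolic decomposition (up to interchange), so that one is entitled to feed the $\CalI_2$-bound $\rank(\hV_2^\infty)=1$ into a statement about $\hW^\infty$. This is the step that genuinely uses that $\CalB$ sits in a double fibration and not merely that it is an extension of one Darboux integrable system; everything else is bookkeeping with \EqRef{infbds}, \EqRef{VAdim}, and Theorem \StRef{DIext}.
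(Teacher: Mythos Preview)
Your approach to parts \textbf{[i]}, \textbf{[ii]}, and \textbf{[iv]} is essentially identical to the paper's. In particular, the ``main obstacle'' you anticipate in \textbf{[ii]} --- that the two hyperbolic decompositions of $\CalB$ arising from $\bfp_1$ and $\bfp_2$ must agree --- is handled exactly as you suggest: by the uniqueness (up to interchange) of the singular Pfaffian systems for a class-$s$ hyperbolic system, one may simply \emph{assume} that \EqRef{singWJV} holds simultaneously for $a=1,2$, so that $\bfp_a^*\hV_a\subset\hW$ and $\bfp_a^*\cV_a\subset\cW$. No further argument is needed. (Your parenthetical mention of the disjointness condition \EqRef{Intro8} in part \textbf{[ii]} is circular --- you are on the way to proving $\CalB$ is Darboux integrable --- but you don't actually use it; the $\CalI_2$-side bound alone does the work.)

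Your argument for part \textbf{[iii]}, however, has a genuine gap. You claim that a first integral of $\CalB$ (i.e.\ a function $f$ with $df\in\CalS(B^1)$) ``would push down via $\bfp_1$ to a first integral of $\CalI_1$''. This is not justified: $B^1 = J_1\oplus\bfp_1^*I_1^1$, and $df$ could have a nonzero component along $J_1$, in which case $f$ is not $\bfp_1$-basic and does not descend. The paper's argument is different and cleaner: since $B^1\subset\hW$ and $B^1\subset\cW$, one has $(B^1)^\infty\subset\hW^\infty\cap\cW^\infty$. By part \textbf{[ii]} (already established),
\[
\hW^\infty\cap\cW^\infty = \bfp_1^*(\hV_1^\infty)\cap\bfp_1^*(\cV_1^\infty) = \bfp_1^*(\hV_1^\infty\cap\cV_1^\infty) = 0,
\]
the last equality because $\CalI_1$ is Darboux integrable. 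Hence $(B^1)^\infty=0$, and Theorem \StRef{DIext}\textbf{[ii]} applies. This repairs your proof with a one-line substitution.
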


\begin{proof}[Proof of  Lemma  \StRef{UMA}]

\noindent
{\bf Part [i]}: This follows immediately from part {\bf [i]} in Theorem \StRef{DIext} above. Indeed, an integral extension with 1-dimensional fibre  of a decomposable system of type $[2,2]$ on a 5-manifold (or an $s=1$ hyperbolic system) is a type [2,2] system on a 6-manifold and hence an $s=2$ hyperbolic system.   This conclusion applies whether we view $\CalB$ as an integrable extension of $\CalI_1$ or $\CalI_2$.  We remark that for any hyperbolic system of class $s$ the singular Pfaffian systems $\{ \hW, \cW\} $ are unique up to an interchange.

\smallskip
\noindent
{\bf Part [ii]}:  According to Theorem \StRef{DIext} the systems  $\{J_a\oplus \bfp_a^*\hV_a, J_a\oplus \bfp_a^* \cV_a\}$,  where the $J_a$ are admissible for $a=1,2$, are both singular systems of $\CalB$. By uniqueness we may assume that equation \EqRef{singWJV} in Theorem \StRef{DIext}  holds simultaneously for $a=1,2$. In particular we have $\bfp_a^*\hV_a\subset \hW$, $\bfp_a^* \cV_a\subset \cW$ and $\hV_a = \hW/\bfp_a$ and $\cV_a = \cW/\bfp_a$ for $a=1,2$.

We apply equation \EqRef{infbds} to the integrable extensions $\bfp_a:\hW\to \hV_a$ and $\bfp_a:\cW\to \cV_a$. With $a=1$ we have $ 2 \leq \rank \hW^\infty \leq 3$ while for $a=2$ equation \EqRef{infbds}  implies $1 \leq \rank \hW^\infty \leq 2$. Therefore $\rank \hW^\infty =2$ and $\hW^\infty=\bfp_1^*(\hV_1^\infty)$. A similar argument implies $\rank \cW^\infty= 2$ and $\cW^\infty=\bfp_1^*(\cV_1^\infty)$.   

\smallskip
\noindent
{\bf Part [iii]}:  We first note that $(B^1)^\infty \subset \hW^{\infty}\cap \cW^\infty$. By part {\bf [ii]} we deduce that
$$
\hW^{\infty}\cap \cW^\infty=\bfp_1^*( \hV_1^\infty ) \cap \bfp_1^*(\cV_1^\infty) = \bfp_1^*(\hV_1^\infty\cap \cV_1^\infty)=0,
$$
since $ \hV_1^\infty\cap \cV_1^\infty =0 $. Therefore $(B^1)^{\infty}=0$, and we  conclude from part {\bf [ii]} of Theorem \StRef{DIext}  that  $\CalB$ is Darboux integrable. 

\smallskip
\noindent
{\bf Part [iv]}:
Finally, the rank condition $\rank (\hW^\infty)=\rank (\cW^\infty)=2$ implies, by equation \EqRef{VAdim}, that $\dim(\vess(\CalB)) = 2$. 	
\end{proof}

\begin{Example} Lemma \StRef{UMA} is easily confirmed directly for the B\"acklund transformations $(\CalB,N)$  between the system $(\CalI_1,N_1)$ representing the wave equation and $(\CalI_i,N_i)$, $i=2,3,4$ given in equation \EqRef{ExA3}. The
structure equations for the rank 2 Pfaffian system $\CalB$ on the 6 manifold $N$ are given in equation \EqRef{StructB1} 
and so $\CalB$ satisfies {\bf [i]} in Lemma \StRef{UMA}. See \cite{bryant-griffiths-hsu:1995a} or Section \StRef{DefDI}. 

The space of intermediate integrals for $\CalB$ are given in equations \EqRef{B1infh} and \EqRef{B1infc}, while those for $\CalI_1$ are given by $\hV_1^\infty=\spn\{ dx , du_{1x} \}, \ \cV_1^\infty= \spn\{ dy , du_{1y}\}$. The map $\bfp_1:N \to N_1$, given in equation \EqRef{p1map}, leads immediately to 
$$
\bfp_1^*(\spn\{ dx , du_{1x} \}) = \spn\{ dx, dV_x +e^V dV\}\quad {\rm  and}\quad \bfp_1^*(\spn\{ dy, du_{1y}\}) = \spn\{dy, dW_y+e^W dW\}
$$ 
which, by equations \EqRef{B1infh} and \EqRef{B1infc}, demonstrates part {\bf [ii]} in Lemma \StRef{UMA}.

The direct verification of Darboux integrability (Definition \StRef{Intro2}) of $\CalB$ follows from equations \EqRef{BH1}, \EqRef{B1infh} and \EqRef{B1infc}. This demonstrates part {\bf [iii]} in Lemma \StRef{UMA}.

Lastly, part  {\bf [iv]} in Lemma \StRef{UMA} follows from  the initial discussion in Section \StRef{CVA}.  The Vessiot algebra of $\CalB$ is noted to be isomorphic to the two dimensional algebra $\Gamma_H$ defined in equation \EqRef{ExA2}. \end{Example}

We now consider the prolongation of $\CalI_1$, $\CalI_2$ and $\CalB$. Let $N^{[1]}$ be the 8-dimensional manifold consisting of 2-dimensional regular integral elements of $\CalB$ and let $\CalB^{[1]}$ be the class $s=4$ hyperbolic system for the prolongation of $\CalB$ \cite{bryant-griffiths-hsu:1995a}.  Likewise, let $N_a^{[1]}$ be the 7-dimensional manifolds of regular 2-dimensional integral elements of $\CalI_a$ and let $\CalI^{[1]}_a$ be the $s=3$ hyperbolic systems which are the prolongations of $\CalI_a$. Let $\pi_a: N_a^{[1]} \to N_a$ and $\pi:N^{[1]}\to N$ be the projection maps from the prolongation spaces.   Finally, since $\CalB$ is an integrable extension of $\CalI_a$ the maps $\bfp_a$ canonically define projection maps on the space of regular integral elements which we denote by  $ \bfp_a^{[1]}: N^{[1]}\to N_a^{[1]}$. A direct application of property {\bf IE\, [iv]} in Section 2.1 of \cite{anderson-fels:2014a} produces the following theorem which summarizes the effect of prolongation.

\begin{Theorem} The top portion of the commutative diagram
\begin{equation}
\begin{gathered}
\beginDC{\commdiag}[3]
\Obj(0, 32)[B]{$(\CalB^{[1]},N^{[1]})$}
\Obj(0, 12)[bB]{$(\CalB,N)$}
\Obj(-24, 20)[I1]{$(\CalI_1^{[1]},N_1^{[1]})$}
\Obj(24, 20)[I2]{$(\CalI_2^{[1]},N_2^{[1]})$}
\Obj(-24, 0)[bI1]{$(\CalI_1,N_1)$}
\Obj(24, 0)[bI2]{$ (\CalI_2,N_2)$}
\mor{B}{bB}{\lower 20pt \hbox{$\pi$} }[\atright, \solidarrow]
\mor{I1}{bI1}{$\pi_1$}[\atright, \solidarrow]
\mor{I2}{bI2}{$\pi_2$}[\atleft, \solidarrow]
\mor{B}{I1}{$\bfp_1^{[1]}$}[\atright, \solidarrow]
\mor{B}{I2}{$\bfp_2^{[1]}$}[\atleft, \solidarrow]
\mor{bB}{bI1}{$\bfp_1$}[\atright, \solidarrow]
\mor{bB}{bI2}{$\bfp_2$}[\atleft, \solidarrow]
\enddc
\end{gathered}
\EqTag{57}
\end{equation}
\noindent 
defines a B\"acklund transformation $\CalB^{[1]}$ with 1-dimensional fibres between the Darboux integrable systems $\CalI_1^{[1]}$ and $\CalI_2^{[1]}$. 
\end{Theorem}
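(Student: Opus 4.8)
The plan is to deduce the statement from property \textbf{IE [iv]} of Section~2.1 of \cite{anderson-fels:2014a}, which governs the behaviour of integrable extensions under prolongation, together with a dimension count and the hypotheses already in force in this section. By definition, the assertion that the lower double fibration $\bfp_a\colon(\CalB,N)\to(\CalI_a,N_a)$, $a=1,2$, is a B\"acklund transformation means precisely that each $\bfp_a$ exhibits $\CalB$ as an integrable extension of $\CalI_a$, and by hypothesis these extensions have $1$-dimensional fibres. First I would recall how the prolonged maps $\bfp_a^{[1]}$ are defined: a regular $2$-dimensional integral element $E\subset T_nN$ of $\CalB$ is pushed forward by $\bfp_{a*}$ to a regular integral element $\bfp_{a*}(E)\subset T_{\bfp_a(n)}N_a$ of $\CalI_a$, and this assignment is exactly the map furnished by \textbf{IE [iv]}. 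Commutativity of the two squares in \EqRef{57}, i.e.\ $\pi_a\circ\bfp_a^{[1]}=\bfp_a\circ\pi$, is immediate from this construction, since both composites send a regular integral element of $\CalB$ at $n$ to its image under $\bfp_{a*}$ based at $\bfp_a(n)$.

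Next I would apply \textbf{IE [iv]} to each of the integrable extensions $\bfp_1$ and $\bfp_2$ separately. This yields that $\bfp_a^{[1]}\colon(\CalB^{[1]},N^{[1]})\to(\CalI_a^{[1]},N_a^{[1]})$ is again an integrable extension, an admissible subbundle of which is obtained by prolonging and pulling back an admissible subbundle $J_a$ for $\bfp_a$; in particular $\bfp_a^{[1]}$ is a smooth surjective submersion onto the full prolongation space of regular integral elements of $\CalI_a$. To see that the fibres are $1$-dimensional, observe that $N$ is a $6$-manifold and $N_a$ a $5$-manifold, so $\dim N^{[1]}=6+2=8$ and $\dim N_a^{[1]}=5+2=7$; a submersion between manifolds of these dimensions has $1$-dimensional fibres. (Equivalently, \textbf{IE [iv]} asserts that prolongation preserves the rank of an admissible subbundle, hence the fibre dimension.)

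It remains to record that $\CalI_1^{[1]}$ and $\CalI_2^{[1]}$ are Darboux integrable. For $\CalI_2^{[1]}$ this is the standing hypothesis \EqRef{RV1} of this section, which via \EqRef{VAdim} also gives $\dim\vess(\CalI_2^{[1]})=3$. For $\CalI_1^{[1]}$, the prolongation of the standard Monge-Amp\`ere system of the wave equation, Darboux integrability is classical (see \cite{Clelland-Ivey:2009a}); alternatively, $\CalB$ is Darboux integrable by Lemma~\StRef{UMA}\,\textbf{[iii]} with $(B^1)^\infty=0$ as shown in that proof, so the degree-one part of $\CalB^{[1]}$ admits no first integrals and Theorem~\StRef{DIext}\,\textbf{[ii]} applied along the prolongation map $\pi\colon N^{[1]}\to N$ (itself an integrable extension) shows $\CalB^{[1]}$ is Darboux integrable. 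Putting these together, diagram~\EqRef{57} presents $\CalB^{[1]}$ as an integrable extension of each of the Darboux integrable systems $\CalI_1^{[1]}$ and $\CalI_2^{[1]}$ with $1$-dimensional fibres, i.e.\ as a B\"acklund transformation between them. I expect the only genuine subtlety to be the correct invocation of \textbf{IE [iv]} — namely that the prolongation of a $1$-fibre integrable extension is again a $1$-fibre integrable extension onto the \emph{full} prolongation space of regular integral elements — since everything else reduces to dimension counts and to hypotheses and results already established.
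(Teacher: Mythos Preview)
Your proposal is correct and follows essentially the same approach as the paper: the paper states that the theorem is ``a direct application of property \textbf{IE\,[iv]} in Section~2.1 of \cite{anderson-fels:2014a}'' and provides no further argument, while you spell out precisely this application together with the supporting dimension counts and the verification that $\CalI_1^{[1]}$ and $\CalI_2^{[1]}$ are Darboux integrable. One minor remark: your ``alternatively'' clause establishes that $\CalB^{[1]}$ is Darboux integrable, which is not what is needed here (that is the content of the subsequent Lemma~\StRef{DIBB}); for the present theorem the Darboux integrability of $\CalI_1^{[1]}$ is simply the classical fact about the prolonged wave equation, as you note first.
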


By the hypothesis in \EqRef{RV1} we have that $\CalI^{[1]}_2$ is Darboux integrable where $\rank \hV^\infty_{2,[1]}=2$ and $\rank \cV^\infty_{2,[1]}=2$.  The number of independent intermediate integrals for $\CalI_1^{[1]}$, the prolongation of the Monge-Amp\`ere form of the wave equation, is 6 since
\begin{equation}
\rank \hV^\infty_{1,[1]} = 3 \qquad {\rm and} \qquad \rank \cV^\infty_{1,[1]} = 3
\EqTag{prow1}
\end{equation}
where $\hV_{1,[1]}$ and $\cV_{1,[1]}$ are the singular Pfaffian systems for $\CalI_1^{[1]}$.
Following the same arguments used to prove Lemma \StRef{UMA},
or by utilizing Theorem 7.5 in \cite{anderson-fels:2014a} we may use diagram \EqRef{57} to conclude the following.

\begin{Lemma} \StTag{DIBB} The differential system $\CalB^{[1]}$ has the following properties.

\smallskip
\noindent
{\bf [i]}  The system $\CalB^{[1]}$ is a Darboux integrable $s=4$ hyperbolic Pfaffian system.

\smallskip
\noindent
{\bf [ii]} The singular systems $\hW_{[1]}$ and $\cW_{[1]}$ of $\CalB^{[1]}$ satisfy $\rank \hW^\infty_{[1]} = 3$, $\rank \cW^\infty_{[1]} = 3$, and
$$
\hW^\infty_{[1]}=\bfp_1^{[1]*}( \hV_{1,[1]}^\infty)\  \quad {\rm and } \quad \cW^\infty_{[1]}=\bfp_1^{[1]*}( \cV_{1,[1]}^\infty).
$$

\smallskip
\noindent
{\bf [iii]} The Vessiot algebra $\vess(\CalB^{[1]})$ is 2-dimensional and is isomorphic to $\vess(\CalB)$. 
\end{Lemma}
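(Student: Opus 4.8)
The plan is to prove Lemma~\StRef{DIBB} by repeating, one prolongation level higher, the argument that established Lemma~\StRef{UMA}, now run on the double fibration \EqRef{57}, and to invoke Theorem~7.5 of \cite{anderson-fels:2014a} for the Vessiot algebra identification in {\bf [iii]}. The inputs already available are: $\CalI_1^{[1]}$ and $\CalI_2^{[1]}$ are Darboux integrable with the first-integral ranks recorded in \EqRef{prow1} and \EqRef{RV1}; $\CalB^{[1]}$ is, as the prolongation of the class $s=2$ hyperbolic system $\CalB$ of Lemma~\StRef{UMA}, a class $s=4$ hyperbolic Pfaffian system \cite{bryant-griffiths-hsu:1995a}; and $\bfp_1^{[1]}$, $\bfp_2^{[1]}$ are integrable extensions with $1$-dimensional fibres, by property {\bf IE\,[iv]} of \cite{anderson-fels:2014a} (the same fact used to build \EqRef{57}). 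So {\bf [i]} is in hand apart from Darboux integrability, which I would establish at the end.

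For {\bf [ii]}, I would first apply part {\bf [i]} of Theorem~\StRef{DIext} to each extension $\bfp_a^{[1]}:\CalB^{[1]}\to\CalI_a^{[1]}$, $a=1,2$, exhibiting $\CalB^{[1]}$ as decomposable with singular Pfaffian systems $J_a^{[1]}\oplus\bfp_a^{[1]*}(\hV_{a,[1]})$ and $J_a^{[1]}\oplus\bfp_a^{[1]*}(\cV_{a,[1]})$ for admissible $J_a^{[1]}$. Since the singular systems of an $s=4$ hyperbolic system are unique up to interchange, I can label them $\hW_{[1]},\cW_{[1]}$ so that $\bfp_a^{[1]*}(\hV_{a,[1]})\subset\hW_{[1]}$ and $\bfp_a^{[1]*}(\cV_{a,[1]})\subset\cW_{[1]}$ for $a=1$ and $a=2$ simultaneously. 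Then I would feed the bounds \EqRef{infbds} into the integrable extensions $\bfp_a^{[1]}:\hW_{[1]}\to\hV_{a,[1]}$: the case $a=1$, with $\rank\hV_{1,[1]}^\infty=3$ from \EqRef{prow1} and $\rank(\ker\bfp_{1*}^{[1]})=1$, gives $3\le\rank\hW_{[1]}^\infty\le 4$, while the case $a=2$, with $\rank\hV_{2,[1]}^\infty=2$ from \EqRef{RV1}, gives $2\le\rank\hW_{[1]}^\infty\le 3$; hence $\rank\hW_{[1]}^\infty=3$. Since $\bfp_1^{[1]*}(\hV_{1,[1]}^\infty)\subset\hW_{[1]}^\infty$ by Corollary~\StRef{PBFI} and both sides have rank $3$, equality $\hW_{[1]}^\infty=\bfp_1^{[1]*}(\hV_{1,[1]}^\infty)$ follows; the identical argument for $\cW_{[1]}$ completes {\bf [ii]}.

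To finish {\bf [i]} I would verify the hypothesis $(E^1)^\infty=0$ of part {\bf [ii]} of Theorem~\StRef{DIext} for $\CalE=\CalB^{[1]}$: decomposability gives $(E^1)^\infty\subset\hW_{[1]}^\infty\cap\cW_{[1]}^\infty$, and by {\bf [ii]} this equals $\bfp_1^{[1]*}\big(\hV_{1,[1]}^\infty\cap\cV_{1,[1]}^\infty\big)=0$, the last step using that the prolonged wave equation $\CalI_1^{[1]}$ satisfies condition {\bf [ii]} of Definition~\StRef{Intro2}. Part {\bf [ii]} of Theorem~\StRef{DIext}, with $\CalI=\CalI_1^{[1]}$ Darboux integrable, then yields Darboux integrability of $\CalB^{[1]}$. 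For {\bf [iii]}, equation \EqRef{VAdim} applied on the $8$-manifold $N^{[1]}$ with the ranks from {\bf [ii]} gives $\dim\vess(\CalB^{[1]})=8-3-3=2$, and $\vess(\CalB^{[1]})\cong\vess(\CalB)$ is exactly Theorem~7.5 of \cite{anderson-fels:2014a}. The step needing the most care is the ``unique up to interchange'' bookkeeping in {\bf [ii]}: one must be certain that the two pairs of singular systems produced by Theorem~\StRef{DIext} from the $a=1$ and $a=2$ extensions are the singular systems of one and the same $s=4$ hyperbolic structure on $\CalB^{[1]}$, for only then may the bounds \EqRef{infbds} coming from the two sides be combined --- this is precisely the point that makes the full double fibration, and not merely one extension, do the work, and it mirrors the corresponding delicate step in the proof of Lemma~\StRef{UMA}~{\bf [ii]}.
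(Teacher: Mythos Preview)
Your proposal is correct and follows essentially the same route as the paper's own proof, which simply instructs the reader to rerun the argument of Lemma~\StRef{UMA} one prolongation level higher using \EqRef{prow1} and the second pair of equalities in \EqRef{RV1}, and to invoke Theorem~7.5 of \cite{anderson-fels:2014a} for part~{\bf [iii]}. You have in fact supplied more detail than the paper does, including the explicit squeeze from \EqRef{infbds} and the verification that $(E^1)^\infty=0$ via $\hW_{[1]}^\infty\cap\cW_{[1]}^\infty=0$.
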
 

\begin{proof} Parts {\bf [i]} and {\bf [ii]} follow using arguments similar to those used in demonstrating ${\bf [i]}$ and ${\bf [ii]}$ in Lemma \StRef{UMA}. For example, part ${\bf [ii]}$ in Lemma \StRef{DIBB}
is proved using \EqRef{prow1} and the second two equalities in \EqRef{RV1} in same way that 
the proof of ${\bf [ii]}$ in Lemma \StRef{UMA} uses equation \EqRef{VdimWave} and the first two equalities in \EqRef{RV1}. Part {\bf [iii]} follows from Theorem 7.5 in \cite{anderson-fels:2014a}. 
\end{proof}


Our goal is now to show that the integrable extension $\CalB^{[1]}\to \CalI_2^{[1]}$ is maximally compatible on the set $\widetilde N \subset N^{[1]}$ defined by
\begin{equation}
\widetilde N= \{\ p \in N^{[1]} \ | \ \rank (\hW^\infty_{[1]})_{\bfp^{[1]}_{2, \semibasic}} = 2 \ {\rm and} \  \rank (\cW^\infty_{[1]})_{\bfp^{[1]}_{2, \semibasic}} =2\  \},
\EqTag{SBF}
\end{equation}
where the subscript in $\bfp^{[1]}_{2, \semibasic}$ denotes the $\bfp^{[1]}_2$ semi-basic forms. 

\begin{Lemma} \StTag{N0} The set $\widetilde N \subset N^{[1]}$ is an open dense subset.
\end{Lemma}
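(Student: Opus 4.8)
The plan is to show that $\widetilde N$ is open and dense by analyzing the rank of the $\bfp^{[1]}_2$-semibasic parts of the bundles $\hW^\infty_{[1]}$ and $\cW^\infty_{[1]}$ as a function on $N^{[1]}$. First I would recall from Lemma \StRef{DIBB} that $\rank \hW^\infty_{[1]} = \rank \cW^\infty_{[1]} = 3$ and that $\hW^\infty_{[1]} = \bfp_1^{[1]*}(\hV^\infty_{1,[1]})$, $\cW^\infty_{[1]} = \bfp_1^{[1]*}(\cV^\infty_{1,[1]})$. The map $\bfp^{[1]}_2 : N^{[1]} \to N^{[1]}_2$ has 1-dimensional fibres, so $\ker \bfp^{[1]}_{2*}$ is a rank-1 distribution on $N^{[1]}$, and the $\bfp^{[1]}_2$-semibasic part of $\hW^\infty_{[1]}$ at a point $p$ is $(\hW^\infty_{[1]})_p \cap \ann\bigl((\ker \bfp^{[1]}_{2*})_p\bigr)$, whose rank is either $3$ (if $\ker \bfp^{[1]}_{2*} \subset \ann(\hW^\infty_{[1]})$ at $p$) or $2$ (otherwise). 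Thus the condition $\rank (\hW^\infty_{[1]})_{\bfp^{[1]}_{2,\semibasic}} = 2$ is simply the condition that the single 1-form spanning $\ker \bfp^{[1]}_{2*}$ — dually, that the vector field spanning $\ker \bfp^{[1]}_{2*}$ — is not annihilated by all of $\hW^\infty_{[1]}$ at $p$; and similarly for $\cW^\infty_{[1]}$.

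The next step is to identify $\widetilde N$ as the complement of two closed sets. Let $Z$ be a nonvanishing local section of $\ker \bfp^{[1]}_{2*}$ and consider the bundle map $\hW^\infty_{[1]} \to \underline{\real}$ given by $\theta \mapsto \theta(Z)$; the locus where this map is zero (i.e. where $Z \in \ann(\hW^\infty_{[1]})$) is closed, and its complement is exactly $\{\rank (\hW^\infty_{[1]})_{\bfp^{[1]}_{2,\semibasic}} = 2\}$. The same construction with $\cW^\infty_{[1]}$ gives the other piece. Hence $\widetilde N$ is the intersection of two open sets, so it is open. Openness is immediate from this description; the content is density.

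For density, the key point is that $\ker \bfp^{[1]}_{2*}$ cannot lie in $\ann(\hW^\infty_{[1]})$ (nor in $\ann(\cW^\infty_{[1]})$) on a nonempty open set. I would argue this from the Darboux-integrability structure: if $\ker\bfp^{[1]}_{2*} \subset \ann(\hW^\infty_{[1]}) \cap \ann(\cW^\infty_{[1]})$ on an open set, then on that set the rank-1 fibre direction is annihilated by $\hW^\infty_{[1]} + \cW^\infty_{[1]}$; but by the analogue of Definition \StRef{Intro2}\,{\bf [i]} applied to $\CalB^{[1]}$ (Darboux integrability, Lemma \StRef{DIBB}\,{\bf[i]}) together with $\rank\hW^\infty_{[1]} = \rank\cW^\infty_{[1]} = 3$, the sum $\hW^\infty_{[1]} + \cW^\infty_{[1]}$ has rank at least $\dim N^{[1]} - 2 = 6$ while $\hW_{[1]} + \cW^\infty_{[1]} = T^*N^{[1]}$ forces... more precisely, I would use that $\hW^\infty_{[1]} \subseteq \hW_{[1]}$ and $\cW^\infty_{[1]} \subseteq \cW_{[1]}$ and that the singular systems $\hW_{[1]}$, $\cW_{[1]}$ of the $s=4$ hyperbolic system $\CalB^{[1]}$ span $T^*N^{[1]}$ pointwise, so $\ann(\hW_{[1]}) \cap \ann(\cW_{[1]}) = 0$. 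The subtler claim is that $\ker\bfp^{[1]}_{2*}$ is not annihilated by $\hW^\infty_{[1]}$ alone on an open set. Here I would exploit that $\bfp^{[1]}_2$ is an integrable extension, so $\ker\bfp^{[1]}_{2*}$ is complementary to $(\bfp^{[1]}_2)^*(T^*N^{[1]}_2)$-transverse directions — equivalently $\ker\bfp^{[1]}_{2*}$ is transverse to $\ann(B^{[1],1})$ — combined with the rank count $\rank\hW^\infty_{[1]} = 3 = \rank\hV^\infty_{2,[1]} + \rank(\ker\bfp^{[1]}_{2*})$ from equation \EqRef{infbds}: if $\ker\bfp^{[1]}_{2*}$ were annihilated by $\hW^\infty_{[1]}$ on an open set, the semibasic rank would drop to $3$ there, contradicting the upper bound $\rank \hZ^\infty \le \rank(\bfp^*\hV^\infty) + \rank(\ker\bfp_*) = 2 + 1 = 3$ only in the boundary case — so I would instead show the equality case of \EqRef{infbds} fails generically.

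The main obstacle I anticipate is this last density argument: ruling out that the fibre direction of $\bfp^{[1]}_2$ is a first-integral direction of $\hW^\infty_{[1]}$ (or $\cW^\infty_{[1]}$) on a full open set. My strategy would be to suppose it happens on an open set $U$ and derive that, over $U$, $\hW^\infty_{[1]}$ descends to a rank-3 completely integrable system on $N^{[1]}_2$ contained in $\hV_{2,[1]}$; but $\rank\hV^\infty_{2,[1]} = 2$ by \EqRef{RV1}, and since $\hW^\infty_{[1]} = \bfp_1^{[1]*}(\hV^\infty_{1,[1]})$ and the first integrals of $\hW_{[1]}$ are intrinsically defined (they are the maximal set of first integrals), this descended rank-3 integrable system would give $3$ independent first integrals of $\hV_{2,[1]}$, contradicting $\rank\hV^\infty_{2,[1]} = 2$. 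Assembling these: the bad locus for $\hW^\infty_{[1]}$ has empty interior, likewise for $\cW^\infty_{[1]}$, so $\widetilde N$ is the complement of a closed set with empty interior, hence open and dense. I would write this up cleanly once the "descent" bookkeeping is pinned down, citing Theorem \StRef{DIext} and equation \EqRef{infbds} for the rank relations and Lemma \StRef{DIBB} for the ranks of $\hW^\infty_{[1]}$, $\cW^\infty_{[1]}$.
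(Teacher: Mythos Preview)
Your proposal eventually lands on exactly the paper's argument, but the write-up meanders through two false starts before getting there. The opening analysis (openness via the rank dichotomy $2$ versus $3$, with the rank-$2$ locus being the complement of a closed set) is correct and matches the paper. The density argument you settle on at the end --- assume the bad locus contains an open set $U$, observe that $\hW^\infty_{[1]}$ is then entirely $\bfp^{[1]}_2$-semibasic on $U$ and hence descends to a rank-$3$ integrable subbundle of $\hV_{2,[1]}$ on $\bfp^{[1]}_2(U)$, contradicting $\rank \hV^\infty_{2,[1]} = 2$ from \EqRef{RV1} --- is precisely the paper's proof. The paper handles your ``descent bookkeeping'' by invoking Theorem~3.3 and Lemma~3.4 of \cite{anderson-fels:2012a}, which give $\hV^\infty_{2,[1]} = \hW^\infty_{[1]}/\bfp^{[1]}_2$ under the constant-rank hypothesis on $U$.

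You should excise the two intermediate attempts: the argument via $\ann(\hW^\infty_{[1]}) \cap \ann(\cW^\infty_{[1]})$ only rules out the intersection of the two bad sets, not each separately, so it cannot yield density; and the attempt via the upper bound in \EqRef{infbds} goes nowhere, as you yourself note (the phrase ``the semibasic rank would drop to $3$'' is also garbled --- if the fibre direction is annihilated, the semibasic rank \emph{equals} the full rank $3$, it does not ``drop''). Keep only the final descent-and-contradiction paragraph and cite the reduction identity $\hV_{2,[1]} = \hW_{[1]}/\bfp^{[1]}_2$ together with the result that passes $(\cdot)^\infty$ through the quotient when the semibasic rank is constant.
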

\begin{proof}  First, since $\rank \hW^\infty_{[1]} = 3$ and $\rank \ker \, (\bfp_{2, *}^{[1]})=1$,  we have
\begin{equation}
2\leq  \rank \, (\hW^\infty_{[1]})_{\bfp^{[1]}_{2, \semibasic}} \leq 3,
\EqTag{SBID}
\end{equation}
and the set   $\widehat A= \{\ p \in N^{[1]} \ | \ \rank (\hW^\infty_{[1]})_{\bfp^{[1]}_{2, \semibasic}} = 2 \, \} $ is open. The identity analogous to \EqRef{SBID} holds for $(\cW^\infty_{[1]})_{\bfp^{[1]}_{2, \semibasic}}$ from which we conclude that $\widetilde N$ is an open set. We now show that the complement $\widehat A^c$ has no interior (and likewise for the similarly defined set $\check A^c$) and hence $ \widetilde N$ is dense.  

Suppose that $U\subset \widehat A^c $ is a non-empty open set. On the submersion $\bfp_2^{[1]} :U \to \bfp_2^{[1]}( U)$ we have
$$
\hV_{2,[1]} = \hW_{[1]}/ \bfp_2^{[1]}.
$$
On $\widehat A^c$ we have, by hypothesis, the constant rank condition $ \rank (\hW^\infty_{[1]})_{\bfp^{[1]}_{2, \semibasic}} =3$ and
so, by Theorem 3.3 and Lemma 3.4 in \cite{anderson-fels:2012a},
\begin{equation}
\hV_{2,[1]} ^\infty = \hW^\infty_{[1]}/ \bfp_2^{[1]} .
\EqTag{PB2}
\end{equation}
This is a contradiction because the left-hand side of equation \EqRef{PB2} has rank 2 (equation \EqRef{RV1}), while the right-hand side has rank 3. Therefore, no such open set $U$ exists. 
\end{proof}


\begin{Theorem}  \StTag{MC} The integrable extension $\bfp_2^{[1]}:(\CalB^{[1]},\widetilde N) \to ( \CalI_2^{[1]},\widetilde N_2 )$, where $\widetilde N_2=\bfp_2^{[1]}(\widetilde N) \subset N_2^{[1]} $,  satisfies conditions {\bf [i]} and {\bf [ii]} of Definition \StRef{MaxCT}  and is therefore maximally compatible.
\end{Theorem}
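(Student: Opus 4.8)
The goal is to verify the two clauses of Definition \StRef{MaxCT} for the extension $\bfp_2^{[1]} \colon (\CalB^{[1]}, \widetilde N) \to (\CalI_2^{[1]}, \widetilde N_2)$. Recall from Lemma \StRef{DIBB} that $\rank \hW^\infty_{[1]} = \rank \cW^\infty_{[1]} = 3$, that $\hW^\infty_{[1]} = \bfp_1^{[1]*}(\hV^\infty_{1,[1]})$ and $\cW^\infty_{[1]} = \bfp_1^{[1]*}(\cV^\infty_{1,[1]})$, and from \EqRef{RV1} that $\rank \hV^\infty_{2,[1]} = \rank \cV^\infty_{2,[1]} = 2$; moreover $\rank \ker(\bfp^{[1]}_{2*}) = 1$ since the fibres are 1-dimensional. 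Condition {\bf [ii]} is then immediate: $\rank \hW^\infty_{[1]} = 3 = 1 + 2 = \rank(\ker \bfp^{[1]}_{2*}) + \rank \hV^\infty_{2,[1]}$, and likewise for the check systems. So the substantive content is condition {\bf [i]}, namely $\ker(\bfp^{[1]}_{2*}) \cap \ann(\hW^\infty_{[1]}) = 0$ and the analogous statement for $\cW^\infty_{[1]}$, and this is exactly what the restriction to the open dense set $\widetilde N$ of Lemma \StRef{N0} is designed to guarantee.

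**Main step: condition [i] via the definition of $\widetilde N$.** The plan is to translate the semi-basic rank condition defining $\widetilde N$ in \EqRef{SBF} directly into the transversality statement {\bf [i]}. Write $K = \ker(\bfp^{[1]}_{2*}) \subset TN^{[1]}$, a rank-1 distribution. For any constant-rank subbundle $W \subset T^*N^{[1]}$, the $\bfp_2^{[1]}$-semibasic part $W_{\bfp^{[1]}_{2,\semibasic}}$ is $W \cap \ann(K)$ (the forms in $W$ annihilating the fibre directions), so $\rank W_{\bfp^{[1]}_{2,\semibasic}} \in \{\rank W - 1, \rank W\}$, with the value $\rank W - 1$ occurring precisely when $W$ is \emph{not} contained in $\ann(K)$, i.e. when some $\xi \in W$ pairs nontrivially with $K$. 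Applying this with $W = \hW^\infty_{[1]}$: the condition $\rank(\hW^\infty_{[1]})_{\bfp^{[1]}_{2,\semibasic}} = 2 = 3 - 1$ defining $\widetilde N$ says exactly that $\hW^\infty_{[1]}$ pairs nontrivially with $K$, equivalently $K \not\subset \ann(\hW^\infty_{[1]})$; since $K$ has rank $1$ this forces $K \cap \ann(\hW^\infty_{[1]}) = 0$, which is the first half of {\bf [i]}. The second half, $K \cap \ann(\cW^\infty_{[1]}) = 0$, follows identically from the second rank condition in \EqRef{SBF}. One then needs to check that $\bfp_2^{[1]}$ restricted to $\widetilde N$ is still an integrable extension onto its image $\widetilde N_2$ — this is routine, since restricting to an open set preserves the submersion property, the admissible subbundle, and the algebraic generation condition \EqRef{IntExtDef}, and the singular Pfaffian systems $\hZ, \cZ$ of $\CalB^{[1]}$ in the sense of \EqRef{singWJV} restrict correctly because $\hW_{[1]}, \cW_{[1]}$ and their $\infty$-bundles are already constant rank (by Lemma \StRef{DIBB}) on all of $N^{[1]}$.

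**Remaining bookkeeping.** It remains to note that $\CalB^{[1]} \to \CalI_2^{[1]}$ is an integrable extension of \emph{Darboux integrable} systems with singular Pfaffian systems satisfying \EqRef{singWJV}, so that Definition \StRef{MaxCT} even applies: Darboux integrability of $\CalB^{[1]}$ is part {\bf [i]} of Lemma \StRef{DIBB}, that of $\CalI_2^{[1]}$ is the standing hypothesis \EqRef{RV1}, and the form \EqRef{singWJV} for the pair $(\CalB^{[1]}, \CalI_2^{[1]})$ is the $a = 2$ case of the identity established in the proof of Lemma \StRef{UMA} part {\bf [ii]} (using uniqueness of the singular systems, which holds here since $\CalB^{[1]}$ is a hyperbolic system of class $s=4$), transported through the prolongation diagram \EqRef{57}; alternatively it follows from part {\bf [ii]} of Lemma \StRef{DIBB} together with Theorem \StRef{DIext}{\bf [i]}. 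Assembling these pieces gives both {\bf [i]} and {\bf [ii]} of Definition \StRef{MaxCT} on $\widetilde N$, and hence the extension is maximally compatible.

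**The main obstacle.** The only real subtlety is the bookkeeping in the previous paragraph: making sure that the identity \EqRef{singWJV} genuinely holds \emph{simultaneously} for the two fibrations $\bfp_1^{[1]}$ and $\bfp_2^{[1]}$, so that $\hW^\infty_{[1]}$ (which we only know explicitly as $\bfp_1^{[1]*}(\hV^\infty_{1,[1]})$ from Lemma \StRef{DIBB}{\bf [ii]}) is the \emph{same} bundle whose semi-basic rank with respect to $\bfp_2^{[1]}$ is being constrained in \EqRef{SBF}. This is exactly the role of the uniqueness (up to interchange) of singular Pfaffian systems for hyperbolic systems, already invoked in the proof of Lemma \StRef{UMA}; once that is in hand, the transversality argument of the second paragraph is a one-line rank count. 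I would state this uniqueness-plus-compatibility observation explicitly at the start of the proof before doing the rank arithmetic.
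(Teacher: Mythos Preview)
Your proposal is correct and follows essentially the same approach as the paper. The only cosmetic difference is in the verification of condition {\bf [i]}: the paper phrases it dually as $T^*\widetilde N_{\bfp_2^{[1]},\semibasic} + \hW^\infty_{[1]} = T^*\widetilde N$ and checks this by the rank count $7+3-2=8$ (using that $T^*\widetilde N_{\bfp_2^{[1]},\semibasic}\cap \hW^\infty_{[1]}$ is exactly the semibasic part $(\hW^\infty_{[1]})_{\bfp^{[1]}_{2,\semibasic}}$ of rank $2$ on $\widetilde N$), whereas you argue the equivalent tangent-side statement directly from the observation that rank $2<3$ forces some form in $\hW^\infty_{[1]}$ to pair nontrivially with the rank-$1$ fibre distribution; these are the same computation viewed from opposite sides of the duality.
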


\begin{proof}[Proof of  Theorem  \StRef{MC}]   Part {\bf [ii]} in Lemma \StRef{DIBB}
and equation \EqRef{RV1} give the equalities
$$
\rank (\hW^\infty_{[1]})=3= \rank (\hV_{2,[1]}^\infty)+1 \quad {\rm and} \quad  \rank (\cW^\infty_{[1]})=3=\rank (\cV_{2,[1]}^\infty) +1. 
$$
This shows condition {\bf [ii]} in Definition \StRef{MaxCT} holds. 

We now check condition {\bf [i]} or more precisely its dual which, in the present context, is
\begin{equation}
T^*{\widetilde N}_{ \bfp_2^{[1]}, \semibasic}+ \hW^\infty_{[1]} = T^* {\widetilde N}.
\EqTag{PMC}
\end{equation}
We find, by using equation \EqRef{SBF}, that
\begin{equation*}
\begin{aligned}
\rank \left( T^*{\widetilde N}_{ \bfp_2^{[1]}, \semibasic}+ \hW^\infty_{[1]}\right) & = 
\rank \left( T^*{\widetilde N}_{ \bfp_2^{[1]}, \semibasic}\right)+\rank \left( \hW^\infty_{[1]}\right) - \rank\left( T^*{\widetilde N}_{ \bfp_2^{[1]}, \semibasic}\cap \hW^\infty_{[1]}\right)\\ & = 7+3-2=8.
\end{aligned}
\end{equation*}
This implies, by dimensional reasons, that equation \EqRef{PMC} holds.
\end{proof}

We emphasize that Theorem  \StRef{MC} is particular to the extension $\bfp_2^{[1]}:\CalB^{[1]} \to \CalI_2^{[1]}$   and that $\CalB^{[1]}$ is 
 {\it  not} a maximally compatible extension of  the  wave equation $\CalI_1^{[1]}$.  
This follows immediately from  Lemma \StRef{DIBB} {\bf [ii]} which shows that equation {\bf [ii]}  in Definition \StRef{MaxCT} does not hold (with $\rank (J) =1$).

Our next result is a simple consequence of Theorem \StRef{MC} and Corollary \StRef{MaxCT2}, applied to the integrable extension $\bfp_2^{[1]}:(\CalB^{[1]},\widetilde N) \to ( \CalI_2^{[1]},\widetilde N_2 )$.

\begin{Corollary}\StTag{LMQ} There is a Lie algebra monomorphism  from the algebra $\vess(\CalB)$ to the algebra $\vess(\CalI_2^{[1]})$.
\end{Corollary}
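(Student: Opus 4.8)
The plan is to chain together the two main structural results that have already been established. By Theorem~\StRef{MC}, the integrable extension $\bfp_2^{[1]}\:(\CalB^{[1]},\widetilde N)\to(\CalI_2^{[1]},\widetilde N_2)$ is maximally compatible. By Corollary~\StRef{MaxCT2} applied to this extension, there is then a monomorphism of Vessiot algebras
\begin{equation*}
\psi\: \vess(\CalB^{[1]}) \to \vess(\CalI_2^{[1]}).
\end{equation*}
So the only remaining point is to identify $\vess(\CalB^{[1]})$ with $\vess(\CalB)$.

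First I would invoke part~{\bf [iii]} of Lemma~\StRef{DIBB}, which states precisely that $\vess(\CalB^{[1]})$ is isomorphic to $\vess(\CalB)$ (both are $2$-dimensional, and the isomorphism comes from Theorem~7.5 in \cite{anderson-fels:2014a} on the invariance of the Vessiot algebra under prolongation). Composing this isomorphism with $\psi$ yields the desired monomorphism $\vess(\CalB)\to\vess(\CalI_2^{[1]})$. That is the whole argument: it is a two-line deduction from results already in place.

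There is essentially no obstacle here, since all the real work has been done upstream: Lemma~\StRef{UMA} established that $\CalB$ is Darboux integrable with a $2$-dimensional Vessiot algebra, Lemma~\StRef{DIBB} transported this through prolongation, Lemma~\StRef{N0} produced the open dense set $\widetilde N$ on which the semi-basic rank conditions hold, and Theorem~\StRef{MC} verified maximal compatibility on $\widetilde N$. If anything needs care, it is only the bookkeeping that Corollary~\StRef{MaxCT2} applies on the restricted domain $\widetilde N$ rather than all of $N^{[1]}$; but the corollary is a local statement about Vessiot algebras, and since $\widetilde N$ is open and dense, the Vessiot algebra computed there is the Vessiot algebra of $\CalB^{[1]}$. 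Hence the composite $\vess(\CalB)\xrightarrow{\ \sim\ }\vess(\CalB^{[1]})\xrightarrow{\ \psi\ }\vess(\CalI_2^{[1]})$ is the required Lie algebra monomorphism, completing the proof.
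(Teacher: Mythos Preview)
Your proposal is correct and follows essentially the same route as the paper: invoke Theorem~\StRef{MC} to obtain maximal compatibility of $\bfp_2^{[1]}$, apply Corollary~\StRef{MaxCT2} to get the monomorphism $\vess(\CalB^{[1]})\to\vess(\CalI_2^{[1]})$, and then use Lemma~\StRef{DIBB}{\bf [iii]} to identify $\vess(\CalB^{[1]})$ with $\vess(\CalB)$. Your added remark about the restriction to the open dense set $\widetilde N$ is a sensible clarification that the paper leaves implicit.
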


The B\"acklund transformation $\CalB^{[1]}$  can therefore be constructed locally as a group quotient using the monorphism from Corollary \StRef{LMQ} in accordance with Theorem 8.2 (or Corollary 9.3 in \cite{anderson-fels:2014a}). The integrable extension $\CalB \to \CalI_2$ can also be constructed using group quotients by the deprolongation of the group quotient construction which produces $\CalB^{[1]} \to \CalI_2^{[1]}$ (see \cite{anderson-fels:2012a}).

Our final result  formalizes our remark in the Introduction of \cite{anderson-fels:2014a} on the non-existence of B\"acklund transformations.

\begin{Theorem}\StTag{NoTom}  Let $\CalI_2$ be a hyperbolic Monge-Amp\`ere system satisfying  \EqRef{RV1}.
	If the Vessiot algebra $\vess(\CalI_2^{[1]})$ of $\CalI_2^{[1]}$ is ${\mathfrak s \mathfrak o}(3,\real)$,
	then $\CalI_2$ does not admit a B\"acklund transformation (having  1-dimensional fibers)
	to the wave equation.  The Vessiot algebra for the equation
\begin{equation}
u_{xy}= \frac{ \sqrt{1-u_x^2} \sqrt{1-u_y^2} }{\sin u }
\EqTag{SO3e}
\end{equation}
	is ${\mathfrak s \mathfrak o}(3,\real)$  and therefore  \EqRef{SO3e}
	does not admit such a  B\"acklund transformation to the wave equation.
\end{Theorem}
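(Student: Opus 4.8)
The plan is to split Theorem~\StRef{NoTom} into two assertions: the general non-existence statement and the concrete verification for equation~\EqRef{SO3e}. For the first assertion, suppose for contradiction that $\CalB$ is a B\"acklund transformation with 1-dimensional fibres between $\CalI_2$ and the wave equation $\CalI_1$. By Lemma~\StRef{UMA}, $\CalB$ is Darboux integrable with $\dim\vess(\CalB)=2$, and by Lemma~\StRef{DIBB}, $\vess(\CalB^{[1]})\cong\vess(\CalB)$ is still 2-dimensional. Theorem~\StRef{MC} shows that $\bfp_2^{[1]}\:(\CalB^{[1]},\widetilde N)\to(\CalI_2^{[1]},\widetilde N_2)$ is maximally compatible, so Corollary~\StRef{MaxCT2} (equivalently Corollary~\StRef{LMQ}) produces a Lie algebra monomorphism $\psi\:\vess(\CalB^{[1]})\hookrightarrow\vess(\CalI_2^{[1]})$. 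Thus $\vess(\CalI_2^{[1]})$ would contain a 2-dimensional subalgebra. But by hypothesis $\vess(\CalI_2^{[1]})\cong\mathfrak{s}\mathfrak{o}(3,\real)$, and $\mathfrak{s}\mathfrak{o}(3,\real)$ is simple of dimension 3 with every proper nonzero subalgebra 1-dimensional (any 2-dimensional subalgebra would be solvable, hence contained in a Borel, but $\mathfrak{s}\mathfrak{o}(3,\real)$ has no 2-dimensional subalgebras — this is a standard fact, provable directly from the structure constants). This contradiction establishes the non-existence.

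For the second assertion, I would verify that the Monge-Amp\`ere system $\CalI_2$ associated with \EqRef{SO3e} satisfies the rank hypotheses \EqRef{RV1} and that its prolongation has Vessiot algebra $\mathfrak{s}\mathfrak{o}(3,\real)$. The cleanest route is to exhibit $\CalI_2^{[1]}$ as a canonical quotient: using Theorem~\StRef{Dreduce} and its converse (remark {\bf [ii]} after Theorem~\StRef{Rdiag}), one seeks contact systems $\CalK_1,\CalK_2$ on jet spaces together with a common diagonal $SO(3,\real)$-action whose quotient is $\CalI_2^{[1]}$. Concretely, \EqRef{SO3e} has the intermediate integrals built from the rotation-invariant differential operators acting on the dependent variable, so one checks that the singular Pfaffian systems $\hV_2,\cV_2$ each admit exactly one first integral and that after prolongation each admits exactly two; then \EqRef{VAdim} forces $\dim\vess(\CalI_2^{[1]})=3$. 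Identifying the algebra as $\mathfrak{s}\mathfrak{o}(3,\real)$ rather than $\mathfrak{s}\mathfrak{l}(2,\real)$ or a solvable 3-dimensional algebra is the key point: this follows by computing the structure equations of the coframe adapted to the decomposition, or—more transparently—by recognizing that the canonical quotient realization uses the projective action of $SO(3,\real)$ on the circle (equivalently, the $\sin u$ in \EqRef{SO3e} is the signature of a compact real form), which one confirms by checking that the relevant symmetry vector fields close into $\mathfrak{s}\mathfrak{o}(3,\real)$ brackets. Once $\vess(\CalI_2^{[1]})\cong\mathfrak{s}\mathfrak{o}(3,\real)$ is established, the first assertion applies verbatim.

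The main obstacle I anticipate is the explicit computation in the second assertion: verifying that \EqRef{SO3e} genuinely satisfies \EqRef{RV1} (that it is Darboux integrable after exactly one prolongation and not Monge integrable) and pinning down the isomorphism type of the 3-dimensional Vessiot algebra. Dimension counting alone does not distinguish $\mathfrak{s}\mathfrak{o}(3,\real)$ from $\mathfrak{s}\mathfrak{l}(2,\real)$, so one must actually exhibit either the compact group action in a quotient representation or compute enough of the structure equations to read off a definite-signature Killing form. This is a finite but genuinely computational check; the DifferentialGeometry package referenced in the introduction would be the natural tool, and I would present the outcome rather than the intermediate steps. The first assertion, by contrast, is essentially formal once Lemmas~\StRef{UMA} and~\StRef{DIBB}, Theorem~\StRef{MC}, and Corollary~\StRef{MaxCT2} are in hand, together with the elementary fact that $\mathfrak{s}\mathfrak{o}(3,\real)$ has no 2-dimensional subalgebra.
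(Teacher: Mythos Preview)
Your argument for the first assertion is correct and matches the paper's exactly: Corollary~\StRef{LMQ} (which packages Lemma~\StRef{DIBB}, Theorem~\StRef{MC}, and Corollary~\StRef{MaxCT2}) supplies the monomorphism from the 2-dimensional $\vess(\CalB^{[1]})$ into $\vess(\CalI_2^{[1]})$, and the absence of 2-dimensional subalgebras of $\mathfrak{s}\mathfrak{o}(3,\real)$ yields the contradiction.

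For the second assertion the paper takes a route more concrete than your preferred one. Rather than seeking a canonical quotient representation by an explicit $SO(3,\real)$ action on jet spaces, the paper writes down the intermediate integrals $\hxi,\cxi$ of \EqRef{SO3e} directly, passes to coordinates $(x,y,u,\alpha,\beta,\hxi,\cxi)$ with $\alpha=\arcsin u_x$ and $\beta=\arcsin u_y$, exhibits an explicit 4-adapted coframe $\theta^1,\theta^2,\theta^3$ for $\CalI_2^{[1]}$, and reads the Vessiot algebra straight off the $\theta^i\wedge\theta^j$ terms of the structure equations, which take the form $d\theta^1\equiv-\theta^2\wedge\theta^3$, $d\theta^2\equiv\theta^1\wedge\theta^3$, $d\theta^3\equiv-\theta^1\wedge\theta^2$. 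This handles in one stroke both the Darboux integrability of $\CalI_2^{[1]}$ and the identification of the algebra; there is no separate verification of \EqRef{RV1} and no need to distinguish $\mathfrak{s}\mathfrak{o}(3,\real)$ from $\mathfrak{s}\mathfrak{l}(2,\real)$ via the Killing form, since the structure constants are explicit. Your alternative of computing structure equations of an adapted coframe is precisely what the paper does; your proposed quotient-representation route would also work in principle but requires guessing the correct $SO(3)$ action, which is less mechanical than the direct coframe computation.
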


\begin{proof}[Proof of Theorem  \StRef{NoTom}.] 
By Corollary \StRef{LMQ} and part {\bf [iii]} in Lemma \StRef{DIBB}, 
if such a B\"acklund transformation existed, then  then there would be a monomorphism from the 
2-dimensional Lie algebra $\vess(\CalB^{[1]})$ to $\vess(\CalI^{[1]}_2)={\mathfrak s \mathfrak o}(3,\real)$. However,
the Lie algebra ${\mathfrak s \mathfrak o}(3,\real)$ has no two dimensional subalgebras and therefore no such B\"acklund transformation exists.

	To finish the proof of the theorem we show equation \EqRef{SO3e} has ${\mathfrak s \mathfrak o}(3,\real)$ 
	as its Vessiot algebra. Let $(x,y,u,u_x,u_y, u_{xx},u_{yy})$
	be the standard jet coordinates on the open set $M^{[1]} \subset \real^7$, where $|u_x| <1 $ and $|u_y| <1$.
	The intermediate integrals for equation \EqRef{SO3e}
	are $x,y$ and
\begin{equation}
	\hxi = \frac{u_{xx}}{\sqrt{1-u_x^2}}-\sqrt{1-u_x^2} \cot u ,
	\quad \cxi =\frac{u_{yy}}{\sqrt{1-u_y^2}}-\sqrt{1-u_y^2} \cot u.
\end{equation}
	In terms of  coordinates $(x,y,u,\alpha=\arcsin u_x,\beta=\arcsin u_y,\hxi,\cxi)$, where 
	$- \pi/2 < \alpha ,  \beta < \pi/2$  the differential system $\CalI^{[1]}$ is the Pfaffian system given by the 1-forms
\begin{equation}
\begin{aligned}
\theta^1 & =\hxi dx-(\cxi \cos u-\sin u \cos \beta ) dy-d\alpha+\cos u\, d\beta, \\
\theta^2 &= (\cxi \sin u \sin \alpha +\sin \alpha \cos u \cos \beta-\cos \alpha \sin \beta) dy+\cos \alpha\, du-\sin u \sin \alpha\, d\beta,\\
\theta^3 &= -dx-(\sin \beta \sin \alpha +\cxi \cos \alpha \sin u +\cos \alpha \cos u \cos \beta)dy+\sin \alpha\, du+\cos \alpha \sin u\, d\beta.
\end{aligned}
\end{equation}
The structure equations for the co-frame $(\theta^1,\theta^2,\theta^3, dx, d\hxi, dy, d\cxi )$  are
$$
\begin{aligned}
	d\theta^1 & =  -\theta^2 \wedge \theta^3 -\theta^2 \wedge dx - dx \wedge  d\hxi + \cos u \, dy \wedge d \cxi, \\
	d \theta^2 &=  \theta^1 \wedge \theta^3+\theta^1 \wedge dx+\hxi\, \theta^3\wedge dx - \sin \alpha  \sin u \, dy \wedge d\cxi, \\
	d \theta^3 &= -\theta^1\wedge\theta^2-\hxi\, \theta^2\wedge dx+ \cos\alpha \sin u \, dy \wedge d \cxi ,
\end{aligned}
$$
and one sees, by inspection, that this is a 4-adapted coframe (see \cite{anderson-fels:2014a} or \cite{anderson-fels-vassiliou:2009a}). The $\theta^i \wedge \theta^j $ terms in the structure equations therefore imply that the Vessiot algebra for \EqRef{SO3e} is ${\mathfrak s \mathfrak o}(3,\real)$.
\end{proof}

\appendix

\begin{bibdiv}
\begin{biblist}

\bib{anderson-fels:2005a}{article}{
  author={Anderson, I. M.},
  author={Fels, M. E.},
  title={Exterior Differential Systems with Symmetry},
  journal={Acta. Appl. Math.},
  year={2005},
  volume={87},
  pages={3--31},
}

\bib{anderson-fels:2009a}{article}{
  author={Anderson, I. M.},
  author={Fels, M. E.},
  title={Transformation Groups for Darboux Integrable Systems},
  book ={series={Abel Symposia},	
   title = {Differential Equations: Geometry, Symmetries and Integability. The Abel Symposium 2008},
  volume = {5},	
  editor= {Kruglikov, B.},
  editor={Lychagin, V.},
  editor={Straume, E.},	
  publisher = {Springer},
  year={2009}},
}

\bib{anderson-fels:2012a}{article}{
  author = {Anderson, I. M.},
  author = {Fels, M. E.},
  title={Symmetry reduction of exterior differential systems and B\"acklund transformations for PDE in the plane},
  journal ={Acta Appl. Math.},
  volume = {120},
  year = {2012},
  pages = {29 -60}
}

\bib{anderson-fels:2013a}{article}{
  author = {Anderson, I. M.},
  author = {Fels, M. E.},
  title = {The Cauchy Problem for Darboux Integrable Systems and Non-Linear d'Alembert Formulas},
  journal = {Symmetry, Integrability and Geometry: Methods and Applications (SIGMA)},
  volume = {9},
  year = {2013},		
  pages = {22}
}

\bib{anderson-fels:2014a}{article}{
  author={Anderson, I. M.},
  author={Fels, M. E.},
  title={B\"acklund Transformations for  Darboux Integrable Differential Systems},
   journal = {Selecta Mathematics, New Series},
   volume = {21},
  number = {2},
  year={2015},
    pages = {379-448}
}


\bib{anderson-fels-vassiliou:2009a}{article}{
  author = {Anderson, I. M.},
  author = {Fels, M. E.},
  author = {Vassiliou, P. J.}, 	
  title={Superposition Formulas for Exterior Differential Systems},
  journal ={Advances in Math.},
  volume = {221},
  year = {2009},
  pages = {1910 -1963}
}

\bib{baston-eastwood:1989a}{book}{
  author = {Baston, R. J.},
  author = {Eastwood, M. G.},
  title = {The Penrose Transform: its Interaction with Representation Theory},
  series = {Oxford Mathematical Monographs},
  year = {1989},	
  publisher = {Clarendon Press},
  address = {Oxford}
}

\bib{bryant-chern-gardner-griffiths-goldschmidt:1991a}{article}{
  author = {Bryant, R. L.},
  author = {Chern, S. S.},
  author = {Gardner, R.B.},
  author = {Goldschmidt, H. L.},	
  author = {Griffiths, P. A.},
   title = {Exterior Differential Systems},
 series = {MSRI Publications},
  year={1991},
  volume={18}
}

\bib{bryant-griffiths:1995a}{article}{
  author={Bryant, R. L},
  author={Griffiths, P. A.},
  title={Characteristic cohomology of differential systems II: Conservation Laws for a class of Parabolic equations},
  journal={Duke Math. Journal},
  volume={78},
  number={3},
  year={1995},
  pages={531--676},
}

\bib{bryant-griffiths-hsu:1995a}{article}{
  author={Bryant, R. L.},
  author={Griffiths, P. A.},
  author={Hsu, L.},
  title={Hyperbolic exterior differential systems and their conservation laws {I} },
  journal={Selecta Math., New series},
  year={1995},
  volume={1},
  pages={21--122}
}

\bib{Clelland-Ivey:2009a}{article}{
 author = {Clelland, J. N.},
 author = {Ivey, T. A.},
 title = {B\"acklund transformations and Darboux integrability for nonlinear wave equations},
 volume = {13},
 journal = {Asian J. Math},
 pages={15--64}
}

\bib{gardner-kamran:1993a}{article}{
author = {R. B. Gardner},
author = { N. Kamran},
title = { Characteristics and the geometry of hyperbolic equations in the plane},
year =  {1993},
volume = {104},
journal = { J. Differential Equations},
pages = { 60--117}
}

\bib{goursat:1897a}{book}{
  author={Goursat, E.},
  title={Le\c{c}ons sur l'int\'egration des \'equations aux d\'eriv\'ees partielles du second ordre \'a deux variables ind\'ependantes, Tome 1, Tome 2},
  publisher={Hermann},
  address={Paris},
  year={1897}
}

\bib{goursat:1900a}{article}{
author = {Goursat, E.},
title = {Sur une transformation de l ' \'equation $s^2 = 4\lambda(x,y) pq$},
journal = {Bulletin de Soci\'et\'e Math\'ematique de France},
volume = {28},
year = {1900},
pages = {1--6},
}

\bib{stormark:2000a}{book}{
  author={Stormark, O.},
  title={Lie's Structural Approach to PDE Systems},
  series={Encyclopedia of Mathematics and its Applications},
  publisher={Cambridge University Press},
  address={Cambridge},
  year={2000},
  volume={80}
}

\bib{strazzullo:2009a}{thesis}{
 author = {Strazzullo, F.},
 title ={Symmmetry Analysis of General Rank 3 Pfaffian Systems in Five Variables},
 date = {2009},
 institution = {Utah State University},
 note ={PhD thesis},
}

\bib{the:2009b}{article}{
 author = {The, D.},
 title = {Contact Geometry of Hyperbolic Equations of Generic Type},
 journal = {Symmetry,  Integrability and Geometry: Methods and Applications},
 volume = {4},
 year = {2008},	
}

\bib{vessiot:1942a}{article}{
  author={Vessiot, E.},
  title={Sur les \'equations aux d\'eriv\'ees partielles du second ordre, F(x,y,z,p,q,r,s,t)=0, int\'egrables par la m\'ethode de {D}arboux},
  year={1942},
  volume={21},
  journal={J. Math. Pure Appl.},
  pages={1--66},
}

\bib{zvyagin:1991a}{article}{
 author = {M. Y. Zvyagin},
 title = {Second order equations reducible to  $z_{xy} = 0$ by a B\"acklund transformation},
 volume = {43},
 year = {1991},
 pages = {30--34}
}

\end{biblist}
\end{bibdiv}

\end{document}